\newtheorem{theorem}{Theorem}
\theoremstyle{plain}
\newtheorem{corollary}{Corollary}
\newtheorem{definition}{Definition}
\newtheorem{lemma}{Lemma}
\newtheorem{proposition}{Proposition}
\newtheorem{remark}{Remark}
\numberwithin{equation}{section}
\begin{document}
\title[ $\ast $-SH Domains]{On $\ast $-Semi Homogeneous Domains}
\author{D.D. Anderson}
\address{Department of Mathematics, The University of Iowa, Iowa City, IA
52242}
\email{\"{y} }
\author{Muhammad Zafrullah}
\address{Department of Mathematics, Idaho State University, Pocatello, 83209
ID}
\email{mzafrullah@usa.net}
\urladdr{http://www.lohar.com}
\date{March 30}
\subjclass[2000]{Primary 13A15; Secondary 13G05; 13F15.}
\keywords{$\ast $-operation, $\ast $-invertibility, UFD, h-local domain,
Ring of Krull type}
\thanks{ We are thankful to Professor S. El Baghdadi for numerous
corrections. Advice from Professors D. Dobbs and G. Bergman was also useful.}

\begin{abstract}
Let $\ast $ be a finite character star operation defined on an integral
domain $D.$ Call a nonzero $\ast $-ideal $I$ of finite type a $\ast $%
-homogeneous ($\ast $-homog) ideal, if $I\subsetneq D$ and $(J+K)^{\ast
}\neq D$ for every pair $D\supsetneq J,K\supseteq I$ of proper $\ast $%
-ideals of finite type$.$ Call an integral domain $D$ a $\ast $-Semi
Homogeneous Domain ($\ast $-SHD) if every proper principal ideal $xD$ of $D$
is expressible as a $\ast $-product of finitely many $\ast $-homog ideals.
We show that a $\ast $-SHD contains a family $\mathcal{F}$ of prime ideals
such that (a) $D=\cap _{P\in \mathcal{F}}D_{P},$ a locally finite
intersection and (b) no two members of $\mathcal{F}$ contain a common non
zero prime ideal. The $\ast $-SHDs include h-local domains, independent
rings of Krull type, Krull domains, UFDs etc. We show also that we can
modify the definition of the $\ast $-homog ideals to get a theory of each
special case of a $\ast $-SH domain.
\end{abstract}

\maketitle
\dedicatory{Dedicated to the memory of Professor P.M. Cohn}

\section{ Introduction}

Let $\ast $ be a finite character star operation defined on an integral
domain $D.$(We will, in the following, introduce terminology necessary
\bigskip for reading this article.) Call a nonzero $\ast $-ideal of finite
type a $\ast $-homogeneous ($\ast $-homog) ideal, if $I\subsetneq D$ and $%
(J+K)^{\ast }\neq D$ for every pair $D\supsetneq J,K\supseteq I$ of proper $%
\ast $-ideals of finite type$.$ To fix the ideas the simplest example of a $%
\ast $-homog ideal is an ideal generated by some positive power of a
principal prime. The initial aim of this article is to show that if a $\ast $%
-ideal $A$ is expressible as a $\ast $-product of finitely many $\ast $%
-homog ideals, then $A$ is uniquely expressible as a $\ast $-product of
finitely many mutually $\ast $-comaximal $\ast $-homog ideals. Call an
integral domain $D$ a $\ast $-Semi Homogeneous Domain($\ast $-SH domain or $%
\ast $-SHD) if for every nonzero non unit $x$ of $D$ the ideal $xD$ is
expressible as a $\ast $-product of finitely many $\ast $-homog ideals. The
purpose of this paper is then to show that a $\ast $-SHD is a $\mathcal{F}$%
-IFC domain of \cite{AZ2} that is a $\ast $-SHD contains a family of prime
ideals $\mathcal{F}$ such that (a) $D=\cap _{P\in \mathcal{F}}D_{P}$ and the
intersection is locally finite and (b) no two members of $\mathcal{F}$
contain a nonzero prime ideal. It turns out that the $\ast $-SHDs contain as
special cases the h-local domains of Matlis \cite{Mat} and an important
paper about them by \cite{Olb}), independent rings of Krull type of Griffin 
\cite{Gr1}, Krull domains, weakly Krull domains see \cite{AMZ}, UFD's etc.
What is special with our approach is that for each kind of domains we can
modify the definition of the $\ast $-homog ideals to give a theory of that
kind of domains. But before we explain that, let's bring in the above
promised introduction to the terminology.

Since this work is steeped in and dependent upon the unifying quality of
star operations, it seems pertinent to give the reader a working knowledge
of some of the notions involved. But before we delve into that, let's
indicate the quite a few theories that we can generate, if we choose to
ignore all mention of star-operations.

What we have developed here is a kind of \textquotedblleft theory
schema\textquotedblright . Let us explain: You may want to give
star-operations a wide berth, for some reason (s ), but you want to have
some idea of what we have developed. Here's what we can say: If you ignore
the \textquotedblleft *\textquotedblright , completely and everywhere, you
would get the main theory of SH domains that characterizes h-local domains
of Matlis and if you change the definition of a homog ideal a little you
would get the theory of unique factorization characterizing 1-dimensional
h-local domains. Another change of definition of homog ideals gives you
h-local Prufer domains. Proceeding this way and changing definitions of
homog ideals judiciously, or as presented in this paper, you can get to the
theories of Dedekind domains and PID's. If you have time and inclination, do
look into the paper that way. Finally if you know of the easy to define star
operation, called the $t$-operation, you would get another round of theories
leading to Krull domains and various kinds of Krull domains and eventually
to UFDs.

Let $D$ be an integral domain with quotient field $K.$ Let $F(D)$ (resp., $%
f(D)$) be the set of nonzero fractional ideals (resp., nonzero finitely
generated fractional ideals) of $D$. A star operation $\ast $ on $D$ is a
closure operation on $F(D)$ that satisfies the following properties for
every $I,J\in F(D)$ and $0\neq x\in K$:

(i) $(x)^{\ast} = (x)$ and $(xI)^{\ast} = xI^{\ast}$,

(ii) $I \subseteq I^{\ast}$, and $I^{\ast} \subseteq J^{\ast}$ whenever $I
\subseteq J$, and

(iii) $(I^{\ast})^{\ast} = I^{\ast}$.

\vspace{0.1cm} \noindent Now, an $I\in F(D)$ is a $\ast $-ideal if $I^{\ast
}=I,$ so a principal ideal is a $\ast $-ideal for every star operation $\ast
.$ Moreover $I\in F(D)$ is called a $\ast $-ideal of finite type if $%
I=J^{\ast }$ for some $J\in f(D)$. To each star operation $\ast $ we can
associate a star operation $\ast _{s}$ defined by $I^{\ast _{s}}=\bigcup
\{\,J^{\ast }\mid J\subseteq I$ and $J\in f(D)\,\}.$ A star operation $\ast $
is said to be of finite type, if $I^{\ast }=I^{\ast _{s}}$ for all $I\in
F(D).$ Indeed for each star operation $\ast ,$ $\ast _{s}$ is of finite
character. Thus if $\ast $ is of finite character $I\in F(D)$ is a $\ast $%
-ideal if and only if for each finitely generated subideal $J$ of $I$ we
have $J^{\ast }\subseteq I.$ For $I\in F(D)$, let $I_{d}=I$, $%
I^{-1}=(D:_{K}I)=\{\,x\in K\mid xI\subseteq D\,\}$, $I_{v}=(I^{-1})^{-1}$, $%
I_{t}=\bigcup \{\,J_{v}\mid J\subseteq I$ and $J\in f(D)\,\}$, and $%
I_{w}=\{\,x\in K\mid xJ\subseteq I$ for some $J\in f(D)$ with $J_{v}=\,D\,\}$%
. The functions defined by $I\mapsto I_{d}$ , $I\mapsto I_{v}$ , $I\mapsto
I_{t}$, and $I\mapsto I_{w}$ are all examples of star operations A $v$-ideal
is sometimes also called a divisorial ideal. Given two star operations $\ast
_{1},\ast _{2}$ on $D$, we say that $\ast _{1}\leq \ast _{2}$ if $I^{\ast
_{1}}\subseteq I^{\ast _{2}}$ for every $I\in F(D)$. Note that $\ast
_{1}\leq \ast _{2}$ if and only if $(I^{\ast _{1}})^{\ast _{2}}=(I^{\ast
_{2}})^{\ast _{1}}=I^{\ast _{2}}$ for every $I\in F(D)$. The $d$-operation, $%
t$-operation, and $w$-operation all have finite character, $d\leq \rho \leq
v $ for every star operation $\rho $, and $\rho \leq t$ for every star
operation $\rho $ of finite character. We will often use the facts that (a)
for every star operation $\ast $ and $I,J\in F(D),~(IJ)^{\ast }=(IJ^{\ast
})^{\ast }=(I^{\ast }J^{\ast })^{\ast },$ (the $\ast $-product), (b) $%
(I+J)^{\ast }=(I+J^{\ast })^{\ast }=(I^{\ast }+J^{\ast })^{\ast }$ (the $%
\ast $-sum) and (c) $I_{v}=I_{t}$ for every $I\in f(D)$. An $I\in F(D)$ is
said to be $\ast $-invertible, if $(II^{-1})^{\ast }=D$. If $I$ is $\ast $%
-invertible for $\ast $ of finite character, then both $I^{\ast }$ and $%
I^{-1}$ are $\ast $-ideals of finite type. An integral domain $D$ is called
a Prufer $\ast $-Multiplication Domain (P$\ast $MD), for a general star
operation $\ast $, if $A$ is $\ast _{s}$-invertible for every $A\in f(D).$
Now let $D$ be a P$\ast $MD. Because in a P$\ast $MD $D,$ $F^{\ast }=F_{v}$
for each $F\in f(D),$ we have $A^{\ast _{s}}=A_{t}$ for each $A\in F(D).$
(When $\ast $ is of finite character, $\ast =\ast _{s}$ and so in such a P$%
\ast $MD $D,$ we have $A^{\ast }=A_{t}$ for each $A\in F(D)$ and so $\ast =t$%
. Moreover, in a P$d$MD $d=t$, making a P$d$MD a Prufer domain.) A P$v$MD is
often written as PVMD. A reader in need of more introduction may consult 
\cite{z00} or \cite[Sections 32 and 34]{gilmer}.

For a star operation $\ast $, a maximal $\ast $-ideal is an integral $\ast $%
-ideal that is maximal among proper integral $\ast $-ideals. Let $\ast $-Max$%
(D)$ be the set of maximal $\ast $-ideals of $D$. For a star operation $\ast 
$ of finite character, it is well known that a maximal $\ast $-ideal is a
prime ideal; every proper integral $\ast $-ideal is contained in a maximal $%
\ast $-ideal; and $\ast $-Max$(D)\neq \emptyset $ if $D$ is not a field. For
a star operation $\ast $ two ideals $A,B$ may be called $\ast $-comaximal if 
$(A,B)^{\ast }=D.$ Indeed if $\ast $ is of finite character then two ideals $%
A,B$ are $\ast $-comaximal if, and only if, $A,B$ do not share (being in) a
maximal $\ast $-ideal $M.$ Thus integral ideals $A_{1},A_{2},...,A_{n}$ are $%
\ast $-comaximal to an ideal $B$ if and only if $(A_{1}A_{2}...A_{n},B)^{%
\ast }=D.$ Next, $I_{w}=\bigcap_{M\in t\text{-Max}(D)}ID_{M}$ for every $%
I\in F(D)$ and $I_{w}D_{M}=ID_{M}$ for every $I\in F(D)$ and $M\in t$-Max$%
(D) $. A $\ast $-operation that gets defined in terms of maximal $\ast $%
-ideals is denoted by $\ast _{w}$ and it is defined as follows: For $I\in
F(D),$ and $I^{\ast _{w}}=\dbigcap\limits_{M\in \ast _{s}\text{-Max}%
(D)}ID_{M}.$ This operation was introduced in \cite{AC} where it was
established that for any star operation $\ast ,$ $\ast _{w}$ is a star
operation of finite character and $\ast _{w}$-Max$(D)=\ast _{s}$-Max$(D)$
and $\ast _{w}\leq \ast ,$ according to, again, \cite{AC}. An integral
domain $D$ is a P$\ast $MD if and only if $D_{M}$ is a valuation domain for
every maximal $\ast $-ideal $M$ of $D$, \cite{HMM}. Next, as the $\ast $%
-product $(IJ)^{\ast }$ of two $\ast $-invertible $\ast $-ideals is again $%
\ast $-invertible it is easy to see that $Inv_{\ast }(D)=\{I:$ $I$ is a $%
\ast $-invertible $\ast $-ideal of $D\}$ is a group under $\ast $%
-multiplication with $P(D)$ the group of nonzero principal fractional ideals
of $D$ as its sub group. The quotient group $Inv_{\ast }(D)/P(D)$ is called
the $\ast $-class group of $D,$ denoted by $Cl_{\ast }(D).$ The $\ast $%
-class groups were introduced and studied by D.F. Anderson in \cite{An} as a
generalization of the $t$-class groups introduced in \cite{B}, \cite{Z} and
further studied in \cite{BZ}. It was shown in \cite{An}, in addition to many
other insightful results, that if $\ast _{1}\leq \ast _{2}$ are two star
operations then $Cl_{\ast _{1}}(D)\subseteq Cl_{\ast _{2}}(D).$

In section 2 we discuss and establish the main features of the general
theory as described in the introduction and in section 3 we discuss the
various examples or special cases of the $\ast $-SH domains, while in
section 4 we discuss weaker or restricted theories such as weakly factorial
domains and almost weakly factorial domains etc. where the $\ast $-homog
ideals have certain properties under special circumstances. In this section
we also give examples, as those examples do not frequent the general scene
as often as those discussed in section 3.

\section{Main Theory}

For a start, to save on space, let us agree that throughout hence, $\ast $
will denote a star operation of finite character, defined on $D,$ and that $%
D $ will be reserved for an integral domain. Let's also recall from the
introduction that a domain $D$ is a $\ast $-SH domain, if for every nonzero
non unit $x$ of $D$ the ideal $xD$ is expressible as a $\ast $-product of $%
\ast $-homog ideals of $D.$ We start with explaining what an ideal being $%
\ast $-homog means.

Let's, for a start, recall that we call a nonzero $\ast $-ideal of finite
type a $\ast $-homogeneous ($\ast $-homog) ideal, if $I\subsetneq D$ and $%
(J+K)^{\ast }\neq D$ for every pair $D\supsetneq J,K\supseteq I$ of proper $%
\ast $-ideals of finite type$.$

\begin{proposition}
\label{Proposition A} Let $I$ be a $\ast $-homog ideal of $D.$ Define $%
M(I)=\{x\in D:$ $(x,I)^{\ast }\neq D\}.$ Then $M(I)$ is the unique maximal $%
\ast $-ideal of $D$ containing $I.$
\end{proposition}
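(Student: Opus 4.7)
The plan is to show $M(I)$ is a proper $\ast$-ideal which contains every other proper $\ast$-ideal of $D$ that contains $I$; together with the general fact that proper $\ast$-ideals sit inside maximal $\ast$-ideals (since $\ast$ is of finite character), this will immediately give both existence and uniqueness.

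First I would record the trivial containments: $I \subseteq M(I)$ because for $x\in I$ we have $(x,I)^{\ast}=I^{\ast}=I\neq D$, and $1\notin M(I)$ because $(1,I)^{\ast}=D$, so $M(I)$ is a proper subset of $D$. Next I would establish closure under the ring operations, which is where the $\ast$-homog hypothesis does its work. If $x,y\in M(I)$, then $(x,I)^{\ast}$ and $(y,I)^{\ast}$ are proper $\ast$-ideals of finite type sandwiched between $I$ and $D$, so by the definition of $\ast$-homog, $((x,I)^{\ast}+(y,I)^{\ast})^{\ast}=(x,y,I)^{\ast}\neq D$; since $(x+y,I)^{\ast}\subseteq (x,y,I)^{\ast}$, we get $x+y\in M(I)$. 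Closure under multiplication by $r\in D$ is immediate since $(rx,I)^{\ast}\subseteq (x,I)^{\ast}\neq D$.

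To upgrade $M(I)$ from an ideal to a $\ast$-ideal, I would use that $\ast$ has finite character, so it suffices to verify $F^{\ast}\subseteq M(I)$ for every finitely generated $F=(x_1,\dots,x_n)\subseteq M(I)$. Iterating the two-element argument above gives $(x_1,\dots,x_n,I)^{\ast}\neq D$ by a short induction using $\ast$-homogeneity at each step, and then for any $y\in F^{\ast}$ one has $(y,I)\subseteq (F+I)^{\ast}\neq D$, so $y\in M(I)$.

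Finally I would observe that any proper $\ast$-ideal $J$ with $J\supseteq I$ is automatically contained in $M(I)$: for each $x\in J$, $(x,I)^{\ast}\subseteq J^{\ast}=J\neq D$, so $x\in M(I)$. Applied to maximal $\ast$-ideals containing $I$, this yields that $M(I)$ itself is one and is the only one. I expect the main obstacle to be the finite-character step — verifying that $M(I)$ is genuinely a $\ast$-ideal (not merely an ideal) by extending the two-ideal $\ast$-homog property to an $n$-ideal statement — but this is a clean induction once the definition is unpacked.
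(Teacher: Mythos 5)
Your proposal is correct and follows essentially the same route as the paper: show $M(I)$ is a proper $\ast$-ideal by using the $\ast$-homog condition on the finite-type $\ast$-ideals $(x,I)^{\ast}$, upgrade to a $\ast$-ideal via finite character and an induction on the number of generators, and then note that every proper $\ast$-ideal containing $I$ lies in $M(I)$, forcing $M(I)$ to be the unique maximal $\ast$-ideal over $I$. The only cosmetic difference is that the paper establishes $(x_{1},\dots,x_{n})^{\ast}\subseteq M(I)$ directly rather than treating sums and multiples separately, but the content is identical.
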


\begin{proof}
Clearly $M(I)$ is an ideal. For, let $x,y\in M(I).$ Then $(x,I)^{\ast }\neq
D\neq (y,I)^{\ast }.$ But then, by definition, $((x,I)^{\ast }+(y,I)^{\ast
})^{\ast }\neq D.$ Now $((x,I)^{\ast }+(y,I)^{\ast })^{\ast
}=((x,y)+I)^{\ast }.$ Also as $((x,y)+I)^{\ast }=((x,y)^{\ast }+I)^{\ast },$
we conclude that for each $a\in (x,y)^{\ast }$ we have $(a,I)^{\ast }\neq D$
and so $x,y\in M(I)$ implies $(x,y)^{\ast }\subseteq M(I).$ Next, let $%
x_{1},x_{2},...,x_{n}\in M(I).$ Then as we have already seen $%
(x_{1},x_{2})^{\ast }\subseteq M(I).$ Suppose that we have shown that for $%
x_{1},x_{2},...,x_{n-1}\in M(I)$ we have $(x_{1},x_{2},...,x_{n-1})^{\ast
}\subseteq M(I).$ Then by definition $%
(((x_{1},x_{2},...,x_{n-1}),I)+(x_{n},I))^{\ast }\neq D$ and so $D\neq
((x_{1},x_{2},...,x_{n}),I)^{\ast }=$ $((x_{1},x_{2},...,x_{n})^{\ast
},I)^{\ast }$ which of course means that for each $\alpha \in
(x_{1},x_{2},...,x_{n})^{\ast },$ $(\alpha ,I)^{\ast }\neq D,$ i.e. for each
positive integer $n,$ $x_{1},x_{2},...,x_{n}\in M(I)$ implies that $%
(x_{1},x_{2},...,x_{n})^{\ast }\subseteq M(I).$ That is, $M(I)$ is a $\ast $%
-ideal, because $\ast $ is of finite character. Indeed as for each $x\in I$ $%
(x,I)^{\ast }=I\neq D$ we have $I^{\ast }\subseteq M(I).$ Now let $P$ be a
maximal $\ast $-ideal containing $M(I),$ then since for each $x\in P$ we
must have $(x,I)^{\ast }\neq D,$ $P=M(I)$ and so $M(I)$ is the unique
maximal $\ast $-ideal containing $I.$
\end{proof}

In \cite{AAZ} a finitely generated nonzero ideal $I$ was called rigid if $I$
belonged to exactly one maximal $t$-ideal, and the maximal ideal containing
a rigid ideal was in turn called potent in \cite{ACZ}. Taking a cue from
that a finitely generated ideal $I$ was called $\ast $-rigid, in \cite{HZ2},
if $I$ belongs to exactly one maximal $\ast $-ideal. (In \cite{DZ} a $\ast $%
-rigid ideal was called homogeneous.) Let us call $M(I),$ defined in the
above proposition, the maximal $\ast $-ideal spawned by $I.$

\begin{corollary}
\label{Corollary B} Let $I$ be a $\ast$-homog ideal. If $I$ is contained in
a prime ideal $Q$ that is contained in some $\ast$-ideal then $Q\subseteq
M(I).$
\end{corollary}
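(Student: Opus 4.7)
The plan is to reduce the hypothesis to the statement that $Q$ lies in some maximal $\ast$-ideal, and then to verify the membership criterion for $M(I)$ pointwise.

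First I would unpack the hypothesis. Interpreting ``contained in some $\ast$-ideal'' as being contained in some proper $\ast$-ideal $A$, the fact that $\ast$ has finite character guarantees that every proper $\ast$-ideal is contained in a maximal $\ast$-ideal (as noted in the introduction). Thus there exists a maximal $\ast$-ideal $P$ with $Q \subseteq P$.

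Next, I would verify $Q \subseteq M(I)$ by a direct check. Pick any $x \in Q$. Since $I \subseteq Q$ by assumption, we have $(x,I) \subseteq Q \subseteq P$. Because $P$ is a $\ast$-ideal, applying $\ast$ yields $(x,I)^{\ast} \subseteq P^{\ast} = P \subsetneq D$, so $(x,I)^{\ast} \neq D$. By the definition of $M(I)$ from Proposition \ref{Proposition A}, this means $x \in M(I)$. As $x \in Q$ was arbitrary, $Q \subseteq M(I)$.

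There is really no main obstacle here beyond the initial interpretive step: everything else follows at once from finite character of $\ast$ and the definition of $M(I)$. The only thing worth flagging is that if one reads ``some $\ast$-ideal'' to include $D$ itself, then the hypothesis has no content, so the intended reading must be ``some proper $\ast$-ideal,'' which is what lets us invoke a maximal $\ast$-ideal containing $Q$.
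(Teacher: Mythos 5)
Your proof is correct and is essentially the immediate argument the paper intends (the paper states this corollary without proof, as a direct consequence of Proposition \ref{Proposition A}): once $Q$ lies in a proper $\ast$-ideal, every $x\in Q$ satisfies $(x,I)^{\ast}\neq D$, so $Q\subseteq M(I)$. Your interpretive remark is also on point --- the hypothesis must mean a \emph{proper} $\ast$-ideal, and one could equally finish by noting that the maximal $\ast$-ideal $P$ containing $Q$ contains $I$ and hence equals $M(I)$ by the uniqueness in Proposition \ref{Proposition A}.
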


For the record we state and prove the following easy to prove result.

\begin{corollary}
\label{Corollary C} A nonzero $\ast$-ideal $I$ of finite type is a $\ast $%
-homog ideal if and only if $I$ is a $\ast$-rigid ideal.
\end{corollary}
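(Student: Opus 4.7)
The plan is to prove both implications almost directly from Proposition~\ref{Proposition A} together with the standard fact that a finite-character star operation admits a maximal $\ast$-ideal above any proper integral $\ast$-ideal. For the forward direction, if $I$ is a $\ast$-homog ideal, then Proposition~\ref{Proposition A} produces the maximal $\ast$-ideal $M(I)$ as the unique maximal $\ast$-ideal containing $I$; this is exactly the statement that $I$ is $\ast$-rigid.

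For the converse, assume $I$ is a $\ast$-ideal of finite type contained in a unique maximal $\ast$-ideal, say $M$. I would start from an arbitrary pair $D\supsetneq J,K\supseteq I$ of proper $\ast$-ideals of finite type and show $(J+K)^{\ast}\neq D$. Since $\ast$ has finite character, $J$ (respectively $K$) is contained in some maximal $\ast$-ideal; any such maximal $\ast$-ideal also contains $I$ and therefore must equal $M$ by $\ast$-rigidity. Hence $J,K\subseteq M$, so $(J+K)^{\ast}\subseteq M^{\ast}=M\neq D$, giving the homogeneity condition.

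One small bookkeeping point I would address explicitly: the definition of $\ast$-rigid in \cite{HZ2} is phrased for finitely generated ideals, while here $I$ is a $\ast$-ideal of finite type, i.e., $I=J^{\ast}$ for some $J\in f(D)$. Since $J$ and $J^{\ast}$ sit inside exactly the same maximal $\ast$-ideals, the two versions of the rigidity condition coincide, so there is no issue in applying the \cite{HZ2} terminology to $I$ itself.

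I do not foresee a real obstacle here; the statement is essentially an immediate reformulation of Proposition~\ref{Proposition A}, with the only genuine content being the elementary observation that under finite character, any proper $\ast$-ideal of finite type that contains $I$ sits inside some maximal $\ast$-ideal above $I$.
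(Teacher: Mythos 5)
Your proof is correct and follows essentially the same route as the paper's: the forward direction is read off from Proposition~\ref{Proposition A}, and the converse places any two proper finite-type $\ast$-ideals $J,K\supseteq I$ inside the unique maximal $\ast$-ideal containing $I$ (using that every proper integral $\ast$-ideal lies under some maximal $\ast$-ideal when $\ast$ has finite character), whence $(J+K)^{\ast}\neq D$. The extra remark reconciling ``finitely generated'' with ``$\ast$-ideal of finite type'' in the definition of $\ast$-rigid is a harmless and reasonable clarification not present in the paper.
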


\begin{proof}
That a $\ast $-homog ideal $I$ is $\ast $-rigid follows from the fact that $%
M(I)$ is the unique maximal $\ast $-ideal containing $I.$ Conversely $I$ is
rigid because $I$ is contained in a unique maximal ideal $P$ and let $A$ be
a $\ast $-ideal containing $I$ such that $A^{\ast }\neq D$, then $A$ must be
in $P$ and in no other maximal $\ast $-ideal because $A$ contains $I.$ So if 
$J$ and $K$ are two proper $\ast $-ideals of finite type containing $I$ then
both $J,K$ are contained in $P$ and hence $(J+K)^{\ast }\neq D.$ So, by the
definition, every $\ast $-rigid ideal is $\ast $-homog.
\end{proof}

\begin{remark}
\label{Remark D} (1) The converse in Corollary \ref{Corollary C} gives the
reason why this is the case that if an ideal $A$ contains a power of a
principal prime $pD$ then $A_{t}\neq D$ only if $A\subseteq pD.$ This is
because each positive power of a principal prime indeed generates a rigid
ideal. It also indicates that if two $\ast $-homog ideals are such that $%
M(I)\neq M(J)$ then $(I+J)^{\ast }=D$ that is $I$ and $J$ do not share a
maximal $\ast $-ideal, which is obvious. (2). The idea of a $\ast $ homog
ideal comes from \cite{DZ}. (3) There may be a question as to why use $\ast $%
-homog when we already have $\ast $-rigid. My reason is partly choice and
partly the fact that when we say, \textquotedblleft $I$ is $\ast $%
-rigid\textquotedblright\ we have to declare the maximal $\ast $-ideal $M$
it belongs to. On the other hand, when we say,\textquotedblleft $I$ is $\ast 
$-homog\textquotedblright\ we do not have to worry about that, as $I$
determines its own maximal $\ast $-ideal $M(I).$
\end{remark}

\begin{proposition}
\label{Proposition E} Let $I,J$ be two $\ast $-homog ideals and let $K$ be a 
$\ast $-homog ideal such that $K\subseteq M(I)\cap M(J).$ Then $%
M(I)=M(J)=M(K).$ Consequently if $I$ and $J$ spawn two distinct maximal $%
\ast $-ideals then $M(I)\cap M(J)$ does not contain a $\ast $-homog ideal.
\end{proposition}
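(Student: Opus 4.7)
The plan is to reduce the statement to a direct application of Proposition \ref{Proposition A}, which guarantees that each $\ast$-homog ideal is contained in a \emph{unique} maximal $\ast$-ideal. The whole argument amounts to observing that if a $\ast$-homog ideal $K$ sits inside some maximal $\ast$-ideal $N$, then $N$ must coincide with $M(K)$.

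More precisely, first I note that since $I$, $J$, and $K$ are $\ast$-homog, Proposition \ref{Proposition A} tells us $M(I)$, $M(J)$, $M(K)$ are maximal $\ast$-ideals (each being the unique maximal $\ast$-ideal containing the corresponding ideal). Now $K \subseteq M(I)$ by hypothesis, so $M(I)$ is a maximal $\ast$-ideal containing $K$; by the uniqueness clause of Proposition \ref{Proposition A} applied to $K$, this forces $M(I) = M(K)$. The identical argument with $J$ in place of $I$ gives $M(J) = M(K)$, so $M(I) = M(J) = M(K)$.

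For the ``consequently'' part, I would just state the contrapositive: if $M(I) \neq M(J)$ then $M(I) \cap M(J)$ cannot contain any $\ast$-homog ideal $K$, for such a $K$ would force $M(I) = M(K) = M(J)$ by the first part.

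There is no real obstacle here; the statement is essentially a bookkeeping corollary of Proposition \ref{Proposition A}. The only thing worth being careful about is not confusing ``the unique maximal $\ast$-ideal containing $K$'' (which is $M(K)$) with ``$M(K)$ is a maximal element of the lattice of $\ast$-ideals containing $K$'' --- but since Proposition \ref{Proposition A} gives uniqueness among \emph{all} maximal $\ast$-ideals of $D$ (not merely those containing $K$ in some weaker sense), the argument goes through with no subtlety.
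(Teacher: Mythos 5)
Your proposal is correct and follows essentially the same route as the paper: both statements reduce immediately to Proposition \ref{Proposition A}. The only cosmetic difference is that the paper re-derives the inclusion $M(I)\subseteq M(K)$ element-wise from the definition $M(K)=\{x\in D:(x,K)^{\ast }\neq D\}$ (using that $M(I)$ is a $\ast$-ideal) and then invokes maximality, whereas you shortcut this by citing the uniqueness clause of Proposition \ref{Proposition A} directly; both arguments are valid and of the same depth.
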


\begin{proof}
Since $K\subseteq M(I)$ and $M(I)$ is a $\ast $-ideal, for each $x\in M(I),$ 
$(x,K)^{\ast }\subseteq M(I)$ and so for each $x\in M(I)$ $(x,K)^{\ast }\neq
D.$ But then $M(I)\subseteq M(K).$ But as $M(I)$ is a maximal $\ast $-ideal
we have $M(I)=M(K).$ Similarly for $M(J).$ The consequently part is obvious.
\end{proof}

We shall call two or more $\ast$-homog ideals similar if they spawn (are
contained in) the same maximal $\ast$-ideal. Indeed it is easy to deduce
from the criterion of similarity that similarity of $\ast$-rigid ideals is
an equivalence relation. If $A$ and $B$ are $\ast$-homog ideals spawning
distinct maximal $\ast$-ideals we may call $A,B$ dissimilar. Also by Remark %
\ref{Remark D}, dissimilar means $\ast$-comaximal.

\begin{corollary}
\label{Corollary F} If $I$ and $J$ are two similar $\ast$-homog ideals then $%
(IJ)^{\ast}$ is a $\ast$-homog ideal similar to them. Consequently any $\ast 
$-product of mutually similar $\ast$- homog ideals is similar to each of
them.
\end{corollary}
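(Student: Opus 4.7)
The plan is to reduce the claim to the characterization of $\ast$-homog ideals as $\ast$-rigid ideals given by Corollary \ref{Corollary C}. Write $M = M(I) = M(J)$ for the common maximal $\ast$-ideal spawned by $I$ and $J$, and let $(IJ)^{\ast}$ denote the candidate $\ast$-homog ideal.

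First I would verify the three basic properties needed for $(IJ)^{\ast}$ to be a candidate. It is a $\ast$-ideal by construction; it is of finite type because if $I = I_0^{\ast}$ and $J = J_0^{\ast}$ with $I_0, J_0 \in f(D)$, then $(IJ)^{\ast} = (I_0 J_0)^{\ast}$ and $I_0 J_0 \in f(D)$; and it is proper because $IJ \subseteq I \subseteq M$ forces $(IJ)^{\ast} \subseteq M^{\ast} = M \subsetneq D$.

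The key step is to show $(IJ)^{\ast}$ lies in a unique maximal $\ast$-ideal, so that Corollary \ref{Corollary C} lets me conclude it is $\ast$-homog. Suppose $N$ is a maximal $\ast$-ideal with $(IJ)^{\ast} \subseteq N$. Since $\ast$ is of finite character, $N$ is prime (as recalled in the introduction), so $IJ \subseteq N$ gives $I \subseteq N$ or $J \subseteq N$. But Proposition \ref{Proposition A} says the unique maximal $\ast$-ideal containing $I$ (resp.\ $J$) is $M(I) = M$ (resp.\ $M(J) = M$). Either way $N = M$. Hence $(IJ)^{\ast}$ is $\ast$-rigid, and by Corollary \ref{Corollary C} it is a $\ast$-homog ideal; moreover $M((IJ)^{\ast}) = M$, so $(IJ)^{\ast}$ is similar to both $I$ and $J$.

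The ``consequently'' part then follows by a straightforward induction on the number of factors: given mutually similar $\ast$-homog ideals $I_1, \ldots, I_n$ (all spawning the same $M$), the base case $n = 2$ has just been established, and for the inductive step one applies the same result to the $\ast$-homog ideals $(I_1 \cdots I_{n-1})^{\ast}$ and $I_n$, using the associativity identity $(I_1 \cdots I_n)^{\ast} = ((I_1 \cdots I_{n-1})^{\ast} I_n)^{\ast}$ that follows from property (a) of $\ast$-products recalled in the introduction. I do not anticipate a real obstacle here; the only subtle point is remembering to invoke the primality of maximal $\ast$-ideals to break $IJ \subseteq N$ into $I \subseteq N$ or $J \subseteq N$, which is what forces the uniqueness.
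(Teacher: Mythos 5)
Your proof is correct and follows essentially the same route as the paper: show that any maximal $\ast$-ideal containing $(IJ)^{\ast}$ must contain $I$ or $J$ (by primality) and hence equal $M(I)=M(J)$, then conclude via the $\ast$-rigid characterization of Corollary \ref{Corollary C}, with the consequently part by induction. Your write-up is in fact a bit more careful than the paper's, since you explicitly check that $(IJ)^{\ast}$ is a proper $\ast$-ideal of finite type before invoking that characterization.
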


\begin{proof}
Suppose that $(IJ)^{\ast }$ is contained in another maximal $\ast $-ideal $P$
then as, say, $I\subseteq P$ we have $P=M(I)$. Similarly if there is a
maximal $\ast $-ideal $Q$ containing, say, $J$ then $Q=M(J)=M(I).$ We can
deal with the consequently part using induction.
\end{proof}

\begin{proposition}
\label{Proposition G} Let $I$ be a $\ast $-homog ideal of $D.$ Then $%
ID_{M(I)}\cap D=I.$
\end{proposition}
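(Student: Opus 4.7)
The plan is to prove the nontrivial inclusion $ID_{M(I)}\cap D\subseteq I$ by the standard localization argument, exploiting that a $\ast$-homog ideal is $\ast$-rigid (Corollary \ref{Corollary C}), i.e., $M(I)$ is the \emph{only} maximal $\ast$-ideal containing $I$. The reverse inclusion $I\subseteq ID_{M(I)}\cap D$ is immediate.

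So take $x\in ID_{M(I)}\cap D$ and write $x=i/s$ with $i\in I$ and $s\in D\setminus M(I)$, so that $sx\in I$. The first key step is to show that $(s,I)^{\ast}=D$. Indeed, since $\ast$ has finite character, if $(s,I)^{\ast}$ were proper it would be contained in some maximal $\ast$-ideal $N$; but then $I\subseteq N$, so the $\ast$-rigidity of $I$ forces $N=M(I)$, contradicting $s\notin M(I)$.

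The second key step converts $(s,I)^{\ast}=D$ into an element-level statement by using that $I$ is of finite type. Write $I=(a_{1},\dots,a_{m})^{\ast}$. Then, using the identity $(A+B)^{\ast}=(A+B^{\ast})^{\ast}$ recorded in the introduction,
\[
(s,a_{1},\dots,a_{m})^{\ast}=(s,I)^{\ast}=D.
\]
Multiplying through by $x$ and using the axiom $(xA)^{\ast}=xA^{\ast}$, we obtain
\[
(xs,xa_{1},\dots,xa_{m})^{\ast}=x\,(s,a_{1},\dots,a_{m})^{\ast}=xD.
\]
Now $xs=i\in I$ and each $xa_{j}\in I$ (since $a_{j}\in I$ and $x\in D$), so the ideal on the left lies inside $I^{\ast}=I$. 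Thus $xD\subseteq I$, giving $x\in I$ as required.

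The only mildly subtle point is the first step, where one must invoke the uniqueness clause in the definition of $M(I)$ (Proposition \ref{Proposition A}) together with the finite-character hypothesis to conclude $(s,I)^{\ast}=D$; once this is in hand, the rest is the usual ``clear denominators inside the star closure'' manoeuvre, legal because $I$ is a $\ast$-ideal of finite type.
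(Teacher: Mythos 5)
Your proof is correct, but it takes a genuinely different route from the paper's. The paper's argument is global: since $\ast _{w}\leq \ast $, a $\ast $-ideal satisfies $I=I^{\ast _{w}}=\bigcap_{P\in \ast \text{-}\mathrm{Max}(D)}ID_{P}$, and because $I$ is $\ast $-homog one has $ID_{P}=D_{P}$ for every maximal $\ast $-ideal $P\neq M(I)$, so the intersection collapses to $ID_{M(I)}\cap D$. Your argument is element-wise and avoids the $\ast _{w}$ machinery entirely: you clear denominators to get $sx\in I$ with $s\notin M(I)$, observe that $(s,I)^{\ast }=D$ (which, as you could note, follows even more directly from the definition of $M(I)$ in Proposition \ref{Proposition A} than from the maximal-ideal argument you give), and then multiply the finite generating set of $(s,I)^{\ast }$ by $x$ and push everything inside $I^{\ast }=I$; this is where the finite-type hypothesis and the axioms $(xA)^{\ast }=xA^{\ast }$ and $(A+B^{\ast })^{\ast }=(A+B)^{\ast }$ do the work. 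The two minor points worth making explicit are that a general element of $ID_{M(I)}$ is a finite sum of fractions, so one takes a common denominator $s$ (legitimate since $M(I)$ is prime, hence $D\setminus M(I)$ is multiplicatively closed), and that $x=0$ is trivial so the axiom $(xA)^{\ast }=xA^{\ast }$ applies. What your approach buys is self-containedness at this early stage of the paper — it uses only Proposition \ref{Proposition A} and the star-operation axioms; what the paper's approach buys is the intermediate identity $I=\bigcap_{P}ID_{P}$, which is reused in the same form in the proofs of Theorem \ref{Theorem M} and Theorem \ref{Theorem 4Aa}.
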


\begin{proof}
Note that since $I$ is a $\ast $-ideal, $I=I^{\ast
_{w}}=\dbigcap\limits_{P\in \ast \text{-}\max (D)}I^{\ast }D_{P},$ because $%
\ast _{w}\leq \ast .$ But since $I$ is $\ast $-homog $ID_{P}=D_{P}$ for all
maximal $\ast $-ideals $P$ other than $M(I),$ $I=(I)^{\ast
_{w}}=ID_{M(I)}\cap (\dbigcap\limits_{P\in \ast \text{-}\max (D)\backslash
M(I)}D_{P}).$ So, $I=ID_{M(I)}\cap D\neq D$ as $I\subseteq M(I).$
\end{proof}

\begin{corollary}
\label{Corollary H} Let $I$ be a $\ast $-homog ideal of $D$ and let $A,B$ be
ideals of $D$ such that $A,B$ are $\ast $-comaximal, i.e. $(A+B)^{\ast }=D,$
(i) If $AB\subseteq I$ then $A\subseteq I$ or $B\subseteq I.$ (ii) If $B$ is 
$\ast $-homog and $(A+B)^{\ast }=D,$ then $(AB)^{\ast }D_{M(B)}\cap D=B.$
\end{corollary}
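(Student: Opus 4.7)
The plan is to reduce both parts to localization at $M(I)$ (resp.\ $M(B)$) using Proposition \ref{Proposition G}, which says $ID_{M(I)} \cap D = I$ for any $\ast$-homog ideal $I$. The key observation, used throughout, is that $(A+B)^{\ast}=D$ together with the fact that the maximal $\ast$-ideal $M(I)$ is a proper $\ast$-ideal forces $A+B \not\subseteq M(I)$; since an ideal sum lies in a prime ideal only if both summands do, at least one of $A,B$ escapes $M(I)$.

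For (i), suppose $AB \subseteq I$. Apply the observation above at $M(I)$: without loss of generality $A \not\subseteq M(I)$, so pick $a \in A \setminus M(I)$. Then $a$ is a unit in $D_{M(I)}$, and $aB \subseteq AB \subseteq I$ localizes to $aBD_{M(I)} \subseteq ID_{M(I)}$; multiplying by $a^{-1}\in D_{M(I)}$ gives $BD_{M(I)} \subseteq ID_{M(I)}$. Intersecting with $D$ and applying Proposition \ref{Proposition G} to $I$ yields $B \subseteq ID_{M(I)} \cap D = I$, as required (the symmetric case with $B \not\subseteq M(I)$ delivers $A \subseteq I$).

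For (ii), I would show the two inclusions $(AB)^{\ast}D_{M(B)} = BD_{M(B)}$ separately and then intersect with $D$. For $\subseteq$: since $A \subseteq D$ we have $AB \subseteq B$, and because $B$ is a $\ast$-homog ideal (in particular a $\ast$-ideal) we get $(AB)^{\ast} \subseteq B^{\ast} = B$, hence $(AB)^{\ast}D_{M(B)} \subseteq BD_{M(B)}$. For $\supseteq$: the observation above applied at $M(B)$ gives $A \not\subseteq M(B)$ (since $B \subseteq M(B)$ would otherwise force $A+B \subseteq M(B)$), so $AD_{M(B)} = D_{M(B)}$, whence $(AB)D_{M(B)} = AD_{M(B)}\,BD_{M(B)} = BD_{M(B)}$ and therefore $(AB)^{\ast}D_{M(B)} \supseteq (AB)D_{M(B)} = BD_{M(B)}$. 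Intersecting with $D$ and using Proposition \ref{Proposition G} for $B$ gives $(AB)^{\ast}D_{M(B)} \cap D = BD_{M(B)} \cap D = B$.

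I do not anticipate a real obstacle: both parts are essentially the same argument, namely that $\ast$-comaximality with a $\ast$-homog ideal forces the ``other'' factor to become a unit after localization at the spawned maximal $\ast$-ideal, after which Proposition \ref{Proposition G} does the pulling-back. The only point that requires a moment's care is verifying that $M(I)$ (resp.\ $M(B)$) is genuinely a prime $\ast$-ideal so that $A+B \not\subseteq M(I)$ follows from $(A+B)^{\ast}=D$; but this is built into the setup, since $\ast$ is of finite character and $M(I)$ was shown to be a maximal $\ast$-ideal in Proposition \ref{Proposition A}.
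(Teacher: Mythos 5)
Your proposal is correct and follows essentially the same route as the paper: in both parts the $\ast$-comaximality forces the ``other'' factor to blow up to all of $D_{M(I)}$ (resp.\ $D_{M(B)}$) upon localization, and Proposition \ref{Proposition G} pulls the resulting inclusion back to $D$. The only cosmetic difference is that in (i) the paper localizes the whole ideal $A$ rather than a single element $a\notin M(I)$, which amounts to the same computation.
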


\begin{proof}
(i) Clearly if $AB\subseteq I$ then $AB\subseteq M(I)$ which implies $%
A\subseteq M(I)$ or $B\subseteq M(I)$ but not both, because $A,B$ are $\ast $%
-comaximal. Now if $A\subseteq M(I)$ then $AB\subseteq I$ implies $%
ABD_{M(I)}=AD_{M(I)}\subseteq ID_{M(I)}.$ This in turn implies that $%
A\subseteq AD_{M(I)}\cap D\subseteq ID_{M(I)}\cap D=I.$ (ii) As $AB\subseteq
(AB)^{\ast }$ and as $A\nsubseteq M(B)$ we conclude that $%
BD_{M(B)}=ABD_{M(B)}\subseteq (AB)^{\ast }D_{M(B)}$ and so $B=BD_{M(B)}\cap
D\subseteq (AB)^{\ast }D_{M(B)}\cap D.$ But already $AB\subseteq B$ and so $%
(AB)^{\ast }\subseteq B$ we have $(AB)^{\ast }D_{M(B)}\cap D\subseteq
BD_{M(B)}\cap D=B.$
\end{proof}

\begin{proposition}
\label{Proposition K} If an ideal $A$ is expressible as a $\ast$-product of $%
\ast$-homog ideals, then $A$ is expressible, uniquely, up to order, as a $%
\ast$-product of mutually $\ast$-comaximal $\ast$-homog ideals.
\end{proposition}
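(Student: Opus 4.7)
The plan is to handle existence and uniqueness separately, exploiting the fact (Proposition~\ref{Proposition A}) that each $\ast$-homog ideal $I$ spawns a unique maximal $\ast$-ideal $M(I)$, which is the only maximal $\ast$-ideal containing $I$. For existence, starting from any representation $A=(I_{1}\cdots I_{n})^{\ast}$ with each $I_{i}$ $\ast$-homog, I would partition $\{I_{1},\ldots,I_{n}\}$ into similarity classes by the maximal $\ast$-ideals they spawn. Corollary~\ref{Corollary F} says the $\ast$-product of similar factors is again $\ast$-homog and similar to them, so within each class the product collapses to a single $\ast$-homog ideal; dissimilar classes are automatically $\ast$-comaximal by Remark~\ref{Remark D}. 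Taking one $\ast$-product per class produces the required representation of $A$.

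For uniqueness, suppose $A=(J_{1}\cdots J_{r})^{\ast}=(K_{1}\cdots K_{s})^{\ast}$, both as $\ast$-products of mutually $\ast$-comaximal $\ast$-homog ideals. The pivotal computation is $AD_{M(J_{i})}=J_{i}D_{M(J_{i})}$: the inclusion ``$\subseteq$'' follows from $A\subseteq J_{i}^{\ast}=J_{i}$, and ``$\supseteq$'' from $A\supseteq J_{1}\cdots J_{r}$ together with $J_{l}D_{M(J_{i})}=D_{M(J_{i})}$ for each $l\neq i$, which in turn holds because $\ast$-comaximality forces $J_{l}\not\subseteq M(J_{i})$.

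To match the two factorizations, I would note that $K_{1}\cdots K_{s}\subseteq A\subseteq J_{i}\subseteq M(J_{i})$; primeness of the maximal $\ast$-ideal $M(J_{i})$ forces some $K_{j}\subseteq M(J_{i})$, and then Proposition~\ref{Proposition A} gives $M(K_{j})=M(J_{i})$, with this $j$ unique by mutual $\ast$-comaximality on the $K$-side. Symmetry yields $r=s$ and, after relabeling, $M(J_{i})=M(K_{i})$ for each $i$. Running the key computation from the $K$-side and combining it with its $J$-side counterpart gives $J_{i}D_{M(J_{i})}=K_{i}D_{M(K_{i})}$, and Proposition~\ref{Proposition G} concludes $J_{i}=J_{i}D_{M(J_{i})}\cap D=K_{i}D_{M(K_{i})}\cap D=K_{i}$. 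The only step requiring genuine book-keeping is this matching of factors by shared spawned maximal $\ast$-ideals; everything else follows directly from the results already established.
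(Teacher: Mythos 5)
Your proposal is correct and follows essentially the same route as the paper: existence by collapsing similarity classes via Corollary \ref{Corollary F}, and uniqueness by recovering each comaximal factor as $AD_{M(J_{i})}\cap D$ and matching factors through the shared spawned maximal $\ast$-ideals. The only cosmetic difference is that you re-derive the key localization identity directly from Proposition \ref{Proposition G}, where the paper cites Corollary \ref{Corollary H}.
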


\begin{proof}
Let $A=(I_{1}I_{2}...I_{m})^{\ast }$. Pick a $\ast $-homog factor say $I_{1}$
and collect all the $\ast $-homog factors of $A$ that are similar to $I_{1}$%
. Next suppose that by a relabeling $I_{1},I_{2},...,I_{n_{1}}$ are all
similar to $I_{1}$and all the remaining ideals are dissimilar to $I_{1}$ and
hence to all of $I_{1},I_{2},...,I_{n_{1}}.$Then by Corollary \ref{Corollary
F}, $J_{1}=(I_{1}I_{2}...I_{n_{1}})^{\ast }$ is a $\ast $-homog ideal. So $%
A=(J_{1}I_{n_{1}+1}...I_{m})^{\ast }$ where none of the $I_{n_{1}+i}$ spawns
the same maximal $\ast $-ideal as $M(J_{1}).$ Now collect all the factors
similar to $I_{n_{1}+1}$ and suppose, by a relabeling, that those factors
are all of $I_{n_{1}+1},I_{n_{1}+2},...,I_{n_{2}}$ and the rest are all
dissimilar to $I_{n_{1}+1}.$ Then by setting $J_{2}=$ $%
(I_{n_{1}+1}I_{n_{1}+2}...I_{n_{2}})^{\ast }$ we have $%
A=(J_{1}J_{2}I_{n_{2}+1}...I_{m})^{\ast }$ where $J_{1},J_{2}$ are $\ast $%
-comaximal to each other and all remaining $I_{i}.$ Continuing thus we can
end up with $A=(J_{1}J_{2}...J_{r})^{\ast }$ where $J_{i}$ are mutually $%
\ast $-comaximal. Since $J_{i}$ are mutually $\ast $-comaximal, and so no
two can be in the same maximal $\ast $-ideal, $AD_{M(J_{i})}\cap D=J_{i},$
by Corollary \ref{Corollary H}. Now suppose that $A$ has another expression $%
A=(K_{1}K_{2}...K_{s})^{\ast }$ as a $\ast $-product of mutually $\ast $%
-comaximal $\ast $-homog ideals. Then $A=(J_{1}J_{2}...J_{r})^{\ast
}=(K_{1}K_{2}...K_{s})^{\ast }.$ As $A\subseteq J_{1}\subseteq M(J_{1}),$ we
must have $K_{i}\subseteq M(J_{1})$ for some $i$, and as $K_{j}$ are mutually%
$\ast $-comaximal, $K_{j}\nsubseteq M(J_{1})$ for $j\neq i.$ But then $%
M(K_{i})=M(J_{1}).$ Next by Corollary \ref{Corollary H}, $AD_{M(J_{1})}\cap
D=J_{1}$ and $AD_{M(K_{i})}\cap D=K_{i}.$ But as $M(K_{i})=M(J_{1}),$ we
conclude that $J_{1}=K_{i}.$ Thus for each $J_{i}$ there is a $K_{j}$ such
that $J_{i}=K_{j}$ and $r\leq s.$ Indeed as $K_{j}$ are mutually $\ast $%
-comaximal, there is a unique $K_{j}$ to each $J_{i}.$ Similarly, starting
with $K$s from the right side we can show that $s\leq r,$ thus establishing
that $r=s.$
\end{proof}

\begin{corollary}
\label{Corollary L} Let $D$ be a $\ast $-SH domain, then each principal
ideal $xD$ generated by a nonzero non unit is uniquely expressible as a $%
\ast $-product of mutually $\ast $-comaximal $\ast $-homog ideals, each of
which is $\ast $-invertible. Also, if $M$ is a maximal $\ast $-ideal
containing $x$ then $xD_{M}\cap D$ is a $\ast $-invertible $\ast $-homog
ideal.
\end{corollary}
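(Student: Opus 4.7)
The plan is to combine the definition of a $\ast$-SHD with Proposition \ref{Proposition K} to obtain the unique factorization, and then strengthen it using that $xD$ is $\ast$-invertible together with Corollary \ref{Corollary H}. Fix a nonzero nonunit $x\in D$. By the definition of a $\ast$-SHD, $xD$ is a $\ast$-product of finitely many $\ast$-homog ideals, so Proposition \ref{Proposition K} yields a unique expression $xD=(J_{1}J_{2}\cdots J_{r})^{\ast}$ with the $J_{i}$ mutually $\ast$-comaximal $\ast$-homog ideals.

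To see each $J_{i}$ is $\ast$-invertible, set $A_{i}=(J_{1}\cdots J_{i-1}J_{i+1}\cdots J_{r})^{\ast}$, so $(J_{i}A_{i})^{\ast}=xD$. Since $xD$ is principal, hence $\ast$-invertible, the fractional ideal $B_{i}=x^{-1}A_{i}$ satisfies $(J_{i}B_{i})^{\ast}=x^{-1}(J_{i}A_{i})^{\ast}=x^{-1}(xD)=D$, exhibiting $J_{i}$ as $\ast$-invertible.

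For the localization statement, let $M$ be a maximal $\ast$-ideal containing $x$. Then $xD=(J_{1}\cdots J_{r})^{\ast}\subseteq M$, and since $M$ is prime some $J_{i}\subseteq M$; by Proposition \ref{Proposition A} this forces $M=M(J_{i})$, and the remaining $J_{k}$ are then dissimilar to $J_{i}$ and hence $\ast$-comaximal to $J_{i}$ by Remark \ref{Remark D}. A short induction (using that if $(I,L)^{\ast}=(J,L)^{\ast}=D$, then from $D=((I+L)(J+L))^{\ast}\subseteq (IJ+L)^{\ast}$ one gets $(IJ,L)^{\ast}=D$) shows $(A_{i}+J_{i})^{\ast}=D$. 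Corollary \ref{Corollary H}(ii), applied with $A=A_{i}$ and $B=J_{i}$, then gives $(A_{i}J_{i})^{\ast}D_{M(J_{i})}\cap D=J_{i}$, i.e.\ $xD_{M}\cap D=J_{i}$, which is a $\ast$-invertible $\ast$-homog ideal as already shown.

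The only step that is not purely bookkeeping is the passage from pairwise $\ast$-comaximality of $\{J_{1},\ldots,J_{r}\}$ to $\ast$-comaximality of $A_{i}$ with $J_{i}$, which reduces to the short identity above. No real obstacle is anticipated; the argument is essentially a direct assembly of Proposition \ref{Proposition K}, Proposition \ref{Proposition A}, Remark \ref{Remark D}, and Corollary \ref{Corollary H}(ii).
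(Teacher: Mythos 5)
Your proposal is correct and follows essentially the same route as the paper: decompose via Proposition \ref{Proposition K}, deduce $\ast$-invertibility of each factor from that of $xD$, and identify $xD_{M}\cap D$ with the unique factor contained in $M$ via Corollary \ref{Corollary H}(ii). You merely fill in two steps the paper leaves implicit (the explicit $\ast$-inverse $x^{-1}A_{i}$ of $J_{i}$, and the induction showing $A_{i}$ is $\ast$-comaximal with $J_{i}$), and both are carried out correctly.
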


\begin{proof}
Since $D$ is a $\ast $-SHD, for every nonzero non unit $x\in D,$ $%
xD=(J_{1}J_{2}...J_{s})^{\ast }$ where each $J_{i}$ is a $\ast $-homog
ideal. By Proposition \ref{Proposition K} we can write $%
xD=(I_{1}I_{2}...I_{r})^{\ast }$ where $I_{i}$ are mutually $\ast $%
-co-maximal $\ast $-homog ideals. Also as $xD$ is $\ast $-invertible, each
of $I_{i}$ is $\ast $-invertible. Finally if $M$ is a maximal $\ast $
containing $x$ then because $I_{i}$ are mutually $\ast $-comaximal $M$
contains exactly one of the $I_{i},$ say $I_{k}.$ So $M=M(I_{k}).$ But then
by Corollary \ref{Corollary H} applies and we get $xD_{M}\cap D=I_{k}.$
\end{proof}

Recall that for a finite character star operation $\ast$ defined on $D,$ $D$
is of finite $\ast$-character if every nonzero non unit element of $D$
belongs to at most a finite number of maximal $\ast$-ideals. Recall also
that if $P$ and $Q$ are two prime ideals of $D$ such that no nonzero prime
ideal is contained in $P\cap Q$ then $D_{P}D_{Q}=K$ the quotient field of $D$
\cite[Lemma 4.1]{AZ2}.

\begin{theorem}
\label{Theorem M}The following are equivalent for an integral domain $D.$
(1) $D$ is a $\ast$-SH domain, (2) $D$ is of finite $\ast$-character and for
every pair $P,Q$ of distinct maximal $\ast$-ideals $P\cap Q$ does not
contain a nonzero prime ideal.
\end{theorem}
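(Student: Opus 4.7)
The plan is to handle the two implications separately, leaning on the structural lemmas established above.

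For $(1)\Rightarrow(2)$: Let $x$ be a nonzero nonunit. Corollary \ref{Corollary L} writes $xD=(I_{1}\cdots I_{r})^{\ast}$ with pairwise $\ast$-comaximal $\ast$-homog factors. Any maximal $\ast$-ideal containing $x$ must contain some $I_{k}$, hence equals $M(I_{k})$, giving finite $\ast$-character. For the second condition, suppose for contradiction that a nonzero prime $\mathfrak{p}$ sits inside $P\cap Q$ for distinct maximal $\ast$-ideals $P,Q$. Pick $0\ne x\in\mathfrak{p}$ and decompose $xD$ as above. Then $I_{1}\cdots I_{r}\subseteq\mathfrak{p}$ forces some $I_{m}\subseteq\mathfrak{p}\subseteq P$ by primality; Corollary \ref{Corollary B} then gives $\mathfrak{p}\subseteq M(I_{m})$, and uniqueness of the maximal $\ast$-ideal over $I_{m}$ forces $P=M(I_{m})=Q$, a contradiction.

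For $(2)\Rightarrow(1)$: Fix a nonzero nonunit $x$ and let $M_{1},\ldots,M_{n}$ be the finitely many maximal $\ast$-ideals containing $x$. For each pair $i\ne j$, the no-common-prime hypothesis together with \cite[Lemma 4.1]{AZ2} yields $D_{M_{i}}D_{M_{j}}=K$; writing $1/x$ as a finite sum of products of elements of $D_{M_{i}}$ and $D_{M_{j}}$ and clearing denominators produces an element of $xD_{M_{i}}\cap D$ lying outside $M_{j}$. Prime avoidance across the finitely many $M_{j}$ with $j\ne i$ then yields a single element $t_{i}\in xD_{M_{i}}\cap D$ lying outside every $M_{j}$ with $j\ne i$.

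Set $I_{i}:=(x,t_{i})^{\ast}$. Any maximal $\ast$-ideal containing $I_{i}$ must contain both $x$ (so lies among the $M_{k}$) and $t_{i}$ (so equals $M_{i}$), so $I_{i}$ is $\ast$-rigid, and by Corollary \ref{Corollary C} it is $\ast$-homog with $M(I_{i})=M_{i}$. The $I_{i}$ are pairwise $\ast$-comaximal (Remark \ref{Remark D}), so standard $\ast$-multiplication gives $(I_{1}\cdots I_{n})^{\ast}=I_{1}\cap\cdots\cap I_{n}$. Proposition \ref{Proposition G} applied to each $\ast$-homog $I_{i}$ together with the observation $(x,t_{i})D_{M_{i}}=xD_{M_{i}}$ (since $t_{i}\in xD_{M_{i}}$) identifies $I_{i}=xD_{M_{i}}\cap D$, and intersecting then yields $(I_{1}\cdots I_{n})^{\ast}=\bigcap_{i}(xD_{M_{i}}\cap D)=xD$ using the locally finite representation $xD=\bigcap_{M}xD_{M}$ valid for the $\ast$-ideal $xD$.

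The main technical obstacle is the local identity $I_{i}D_{M_{i}}=xD_{M_{i}}$, needed to pass from $I_{i}=(x,t_{i})^{\ast}$ to $I_{i}=xD_{M_{i}}\cap D$; this is handled by invoking Proposition \ref{Proposition G} for the $\ast$-homog ideal $I_{i}$ and noting that the generators $x$ and $t_{i}$ already produce $xD_{M_{i}}$ in the localization at $M_{i}$, so the $\ast$-closure contributes nothing new at $M_{i}$.
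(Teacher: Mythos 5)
Your $(1)\Rightarrow(2)$ argument is essentially the paper's: finite $\ast$-character comes from the homogeneous factorization of $xD$, and the no-common-prime condition from pushing a $\ast$-homog factor of some $x\in\mathfrak{p}$ into $\mathfrak{p}$ and using uniqueness of the maximal $\ast$-ideal over it (the paper routes this through Proposition \ref{Proposition E}, but it is the same point). Your $(2)\Rightarrow(1)$ is genuinely different. The paper works directly with $J_{i}=xD_{P_{i}}\cap D$: it shows $(x)=\bigcap J_{i}=(\prod J_{i})^{\ast_{w}}=(\prod J_{i})^{\ast}$ and only then deduces that each $J_{i}$ is of finite type because it is a $\ast$-factor of the $\ast$-invertible ideal $(x)$. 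You instead manufacture finite generation up front, extracting from $D_{M_{i}}D_{M_{j}}=K$ and prime avoidance an element $t_{i}\in xD_{M_{i}}\cap D$ outside every $M_{j}$ with $j\neq i$, so that $(x,t_{i})^{\ast}$ is visibly a finite-type $\ast$-rigid, hence $\ast$-homog, ideal; your derivation of $t_{i}$ from $1/x\in D_{M_{i}}D_{M_{j}}$ is correct, and this is a more constructive route. The one step to tighten is the identification $I_{i}=xD_{M_{i}}\cap D$: the observation $(x,t_{i})D_{M_{i}}=xD_{M_{i}}$ gives only $I_{i}D_{M_{i}}\supseteq xD_{M_{i}}$, and ``the $\ast$-closure contributes nothing new at $M_{i}$'' is exactly the assertion that needs proof, not a consequence of the generators. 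What closes it is the standard finite-character fact that $xD_{M_{i}}\cap D$ is itself a $\ast$-ideal: if $J$ is finitely generated with $J\subseteq xD_{M_{i}}\cap D$, then $sJ\subseteq xD$ for some $s\notin M_{i}$, whence $sJ^{\ast}\subseteq xD$ and $J^{\ast}\subseteq xD_{M_{i}}\cap D$. This yields $(x,t_{i})^{\ast}\subseteq xD_{M_{i}}\cap D$, Proposition \ref{Proposition G} gives the reverse inclusion, and your computation $\bigcap I_{i}=xD$ then goes through. (The paper leans on the same unproved assertion when it calls $xD_{P_{i}}\cap D$ ``also a $\ast$-ideal,'' so this is a shared debt rather than an error peculiar to your argument.)
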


\begin{proof}
(1) $\Rightarrow $ (2). Suppose that $D$ is a $\ast $-SH domain and let $x$
be a nonzero non unit of $D.$ Then $xD=(I_{1}I_{2}...I_{r})^{\ast }$ where $%
I_{i}$ are $\ast $-homog ideals. Since each $I_{i}$ $\subseteq M(I_{i})$
which is a unique maximal $\ast $-ideal, by Proposition \ref{Proposition A},
we conclude that $x$ belongs to at most $r$ maximal $\ast $-ideals. Also if $%
P$ is a maximal $\ast $-ideal of $D$ then for $x\in P\backslash \{0\}$ $%
xD=(I_{1}I_{2}...I_{r})^{\ast }$ and so at least one of $I_{i},$ say $%
I_{j}\subseteq P.$ But then by Corollary \ref{Corollary B}, $P\subseteq
M(I_{j}).$ Since $P$ is a maximal $\ast $-ideal $P=M(I_{j}).$ Thus for each
maximal $\ast $-ideal $P$ of a $\ast $-SHD $D,$ there is a $\ast $-homog
ideal $I$ such that $P=M(I).$ Now let $P$ and $Q$ be two distinct maximal $%
\ast $-ideals in a $\ast $-SH domain $D$. As we have established above,
there exist $\ast $-homog ideals $I,J$ such that $P=M(I)$ and $Q=M(J).$ Now
suppose that there is a nonzero prime ideal $m\subseteq P\cap Q.$ Then as $m$
is a nonzero prime ideal, $m$ contains a nonzero element and hence a $\ast $%
-homog ideal $A,$ which is impossible by Proposition \ref{Proposition E},
because $P=M(I)$ and $Q=M(J)$ are distinct. We next show (2) $\Rightarrow $
(1). Suppose that $D$ is of finite $\ast $ character and that no two maximal 
$\ast $-ideals $P,Q$ contain a nonzero prime ideal. Let $x$ be a nonzero non
unit element of $D.$ Let $T=\{P_{1},P_{2},...,P_{r}\}$ be the set of all the
maximal $\ast $-ideals containing $x.$ Then $(x)=\dbigcap\limits_{P\in \ast 
\text{-}Max(D)}xD_{P}=xD_{P_{1}}\cap xD_{P_{2}}\cap ...\cap xD_{P_{r}}\cap
(\dbigcap\limits_{Q\in \ast \text{-}Max(D)\backslash T}D_{Q})$ $%
=(xD_{P_{1}}\cap xD_{P_{2}}\cap ...\cap xD_{P_{r}})\cap
D=\dbigcap\limits_{i=1}^{r}(xD_{P_{i}}\cap D)$ . We now proceed to show that 
$xD_{P_{i}}\cap D$ is contained in $P_{i}$ and to no other maximal $\ast $
-ideal for each $i=1,..,r.$ Indeed for any maximal $\ast $-ideal $Q$ other
than $P_{i}$ we have $D_{P_{i}}D_{Q}=K$ \cite[Lemma 4.1]{AZ2} we have $%
(xD_{P_{i}}\cap D)D_{Q}=xD_{P_{i}}D_{Q}\cap D_{Q}=K\cap D_{Q}=D_{Q}.$ So $%
xD_{P_{i}}\cap D$ is not contained in any maximal $\ast $-ideal other than $%
P_{i}.$ Using this piece of information we see that $(\dprod%
\limits_{i=1}^{r}(xD_{P_{i}}\cap D))^{\ast
_{w}}=(\dbigcap\limits_{i=1}^{r}xD_{P_{i}}\cap D)^{\ast
_{w}}=\dbigcap\limits_{i=1}^{r}xD_{P_{i}}\cap D=(x)$ and as $\ast _{w}\leq
\ast $ we have $(x)=$ $(\dprod\limits_{i=1}^{r}(xD_{P_{i}}\cap D))^{\ast }.$
That is each $(xD_{P_{i}}\cap D)$ is $\ast $-invertible and hence of finite
type and consequently is a $\ast $-homog ideal, being also a $\ast $-ideal.
Thus for each nonzero non unit $x$ of $D,$ $xD$ is expressible as a finite $%
\ast $-product of $\ast $-homog ideals and $D$ is a $\ast $-SH domain.
\end{proof}

The proof of (2) $\Rightarrow $ (1) of Theorem \ref{Theorem M} shows that
Theorem \ref{Theorem M} could have been replaced by another interesting
result, if we were to use the terminology of \cite{AZ2}. The terminology can
be described as follows. Let $\mathcal{F}$ $=\{P_{\alpha }:\alpha \in I\}$
be a family of nonzero prime ideals of $D.$ $\mathcal{F}$ is called a
defining family of $D$ if $D=\dbigcap\limits_{\alpha \in I}D_{P_{\alpha }}$
. The defining family $\mathcal{F}$ is of finite character if no nonzero non
unit of $D$ belongs to infinitely many members of $\mathcal{F}$. We may call
the defining family $\mathcal{F}$ independent if no two members of $\mathcal{%
F}$ contain a common nonzero prime ideal. The function $\ast _{\mathcal{F}}$
on $F(D)$ defined by $A\mapsto A^{\ast _{\mathcal{F}}}=\dbigcap\limits_{%
\alpha \in I}AD_{P_{\alpha }}$ is called a star operation induced by the
family $\{D_{P_{\alpha }}\}$ of localizations at members of $\mathcal{F}$.
(We shall, in what follows, introduce concepts to facilitate reading of the
paper.) In \cite{AZ2}, an integral ideal $A$ of $D$ was called
unidirectional if $A$ belongs to a unique member of the defining family $%
\mathcal{F}$ of primes. With this terminology it was shown in \cite[Theorem
2.1]{AZ2} that if $D$, $\mathcal{F}$, $\ast _{\mathcal{F}}$ are defined as
above and if $\ast _{\mathcal{F}}$ is of finite character then the family $%
\mathcal{F}$ is independent of finite character if and only if every nonzero
non unit element $x$ of $D,$ $xD$ is expressible as a $\ast _{\mathcal{F}}$%
-product of a finite number of unidirectional ideals.

Now, if we match $\mathcal{F}$ with $\ast$-$Max(D),$ $\ast_{\mathcal{F}}$
gets matched with $\ast_{w}$ and unidirectional ideal with $\ast$-homog
ideal we can restate Theorem 3.3 of \cite{AZ2} as the following result.

\begin{theorem}
\label{Theorem N} Let $\ast $ be a star operation of finite character
defined on an integral domain $D.$ Then the following are equivalent: 1. $D$
is of finite $\ast $-character and for any two distinct $P,Q\in \ast $-$%
Max(D)$ $P\cap Q$ does not contain a nonzero prime ideal, 2. every nonzero
prime ideal of $D$ contains an element $x$ such that $xD$ is a $\ast _{w}$%
-product of $\ast $-homog ideals (Note that as $\ast _{w}\leq \ast $,
\textquotedblleft $\ast _{w}$-product\textquotedblright\ can be replaced by
\textquotedblleft $\ast $-product\textquotedblright , here.), 3. every
nonzero prime ideal of $D$ contains a $\ast $-homog $\ast $-invertible $\ast 
$-ideal, 4. for $P\in \ast $-$Max(D)$ and $0\neq $ $x\in P$, $xD_{P}\cap D$
is a $\ast $-invertible and $\ast $-homog ideal (In \cite{AZ2} $0\neq $ $%
x\in D$ was mistakenly typed in place of $0\neq $ $x\in P.),$ 5. no pair of
distinct maximal $\ast $-ideals contains a nonzero prime ideal and for any
nonzero ideal $A$ of $D$, $A^{\ast _{w}}$ is of finite type. whenever $%
AD_{P} $ is finitely generated for all $P$ in $\ast $-$Max(D)$.
\end{theorem}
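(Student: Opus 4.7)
The plan is to recognize Theorem \ref{Theorem N} as essentially a translation of \cite[Theorem 3.3]{AZ2} into the language of star operations, with condition (5) added as an additional equivalent. The first step is to spell out the dictionary indicated in the paragraph preceding the statement: take $\mathcal{F}=\ast$-Max$(D)$ as the defining family, so that the induced operation $\ast_{\mathcal{F}}$, defined by $A\mapsto\bigcap_{P\in\mathcal{F}}AD_{P}$, coincides with $\ast_{w}$. Under this correspondence, the ``unidirectional'' ideals of \cite{AZ2} that are also $\ast$-ideals of finite type are, by Proposition \ref{Proposition A} and Corollary \ref{Corollary C}, exactly the $\ast$-homog ideals. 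Condition (1) is then precisely the hypothesis that $\mathcal{F}$ is an independent defining family of finite character.

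With the dictionary in place, the equivalences (1) $\Leftrightarrow$ (2) $\Leftrightarrow$ (3) $\Leftrightarrow$ (4) are transcriptions of the corresponding equivalences in \cite[Theorem 3.3]{AZ2}, and I would import them directly. The parenthetical remark inside (2)---that ``$\ast_w$-product'' may be replaced by ``$\ast$-product''---follows because applying $\ast$ to $(I_1\cdots I_n)^{\ast_w}=xD$ and using $(B^{\ast_w})^{\ast}=B^{\ast}$ (a consequence of $\ast_w\le\ast$) yields $(I_1\cdots I_n)^{\ast}=xD$; for the reverse direction one uses Corollary \ref{Corollary L}, which forces each factor to be $\ast$-invertible, and hence the two closures of the product to agree.

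For (1) $\Rightarrow$ (5), the no-common-prime clause is immediate. To see that $A^{\ast_w}$ is of finite type whenever $AD_P$ is finitely generated for every $P\in\ast$-Max$(D)$, pick any $0\neq x\in A$; by finite $\ast$-character $x$ lies in only finitely many maximal $\ast$-ideals $P_{1},\ldots,P_{n}$. Choose a finite generating set $a_{i,1},\ldots,a_{i,k_i}\in A$ of each $AD_{P_i}$, and set $B=(x,\{a_{i,j}\})\subseteq A$. At each $P_i$ we have $BD_{P_i}=AD_{P_i}$, while at every other maximal $\ast$-ideal $Q$ the presence of $x\notin Q$ forces $BD_{Q}=D_{Q}=AD_{Q}$, so $B^{\ast_w}=A^{\ast_w}$ and $A^{\ast_w}$ is of finite type. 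For (5) $\Rightarrow$ (1), I would verify condition (4) above: for $P\in\ast$-Max$(D)$ and $0\neq x\in P$, the ideal $A=xD_P\cap D$ satisfies $AD_P=xD_P$ (principal) and $AD_Q=xD_P D_Q\cap D_Q=K\cap D_Q=D_Q$ for every $Q\neq P$, using $D_P D_Q=K$ via \cite[Lemma 4.1]{AZ2}; by (5), $A^{\ast_w}$ is of finite type, and the formula $A=\bigcap_M AD_M$ identifies $A$ with its $\ast_w$-closure and ultimately with a $\ast$-homog $\ast$-invertible $\ast$-ideal.

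The main obstacle I anticipate is the last step of (5) $\Rightarrow$ (1): promoting $A=xD_P\cap D$ from a $\ast_w$-ideal of finite type to a genuine $\ast$-invertible $\ast$-homog $\ast$-ideal. The key point is that $A^{\ast_w}$ being of finite type, combined with $AD_M$ being principal or the whole ring at each maximal $\ast$-ideal $M$, should force $A^{\ast}=A^{\ast_w}$ and make $A$ $\ast$-invertible; this mirrors the argument already used in the proof of Theorem \ref{Theorem M} but requires care to keep the $\ast$- and $\ast_w$-closures synchronized throughout.
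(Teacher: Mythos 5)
Your proposal is correct and follows exactly the route the paper takes: the paper offers no independent proof of Theorem \ref{Theorem N}, presenting it as a restatement of \cite[Theorem 3.3]{AZ2} under the dictionary $\mathcal{F}\leftrightarrow\ast$-Max$(D)$, $\ast_{\mathcal{F}}\leftrightarrow\ast_{w}$, unidirectional $\leftrightarrow$ $\ast$-homog, together with the remark that Theorem \ref{Theorem M} already establishes (1) $\Leftrightarrow$ (2). Your write-up in fact supplies more detail than the paper does (the finite-type argument for (5) and the $\ast_{w}$- versus $\ast$-product remark), and those added arguments are sound.
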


Note that Theorem \ref{Theorem M} proves the equivalence of (1) and (2) of
Theorem \ref{Theorem N} and that is grounds enough to include Theorem \ref%
{Theorem N} as part of this paper. On the other hand the theory developed in 
\cite{AZ2} is not enough to take care of the more general approach in this
paper. There is, of course, another important difference. While \cite{AZ2}
takes care of independent rings of Krull type and Krull domains by requiring
that for each $P\in \mathcal{F}$, $D_{P}$ is a valuation domain, and
requiring for Krull domains that $D_{P}$ is a rank one DVR for each $P\in 
\mathcal{F}$, the theory presented here lets us define a $\ast $-homog ideal
to fit the picture. For instance we can define, as we show in the following,
a $\ast $-homog ideal to establish the theory of independent rings of Krull
type, or of Krull domains etc. In each case, obviously, Theorem \ref{Theorem
M} and Theorem \ref{Theorem N} are ready proved and all we need show is that
the resulting theory has the distinctive feature that we claim it has. In
what follows, in the next section we make a few demonstrations that list the
variations of the definition of $\ast $-homog ideals and the domains they
lead to.

In view of a comment, at the start of section 3, in \cite{AZ2} we may also
call the domains characterized in Theorems \ref{Theorem M}, \ref{Theorem N} $%
\ast $-h-local, as the domains whose principal ideals generated by nonzero
non units are expressible as $\ast $-products of finitely many $\ast $-homog
ideals, noting that when $\ast =d$ we have the usual definition of h-local
domains of Matlis \cite{Mat} and when $\ast =t$ we have what we termed as
weakly Matlis domains in \cite{AZ2}. The interesting part of this approach
is that, as we demonstrate below, we can redefine the $\ast $-homog ideals
to fit the various special cases of $\ast $-SH domains.

\section{Clones or examples of $\ast $-SHDs}

Let's call $D$ a $\ast $-weakly Krull domain ($\ast $-WKD) if $D$ is a $\ast 
$-SHD such that each maximal $\ast $-ideal $P$ of $D$ is of height $1.$
These domains are known as weakly Krull domains and were first studied in 
\cite{AMZ}.

\begin{definition}
\label{Definition P1} Call a $\ast $-homog ideal $I,$ $\ast $-homog of type $%
1$, if for every $x\in M(I)\backslash \{0\}$ there is a positive integer $n$
such that $x^{n}D_{M(I)}\cap D\subseteq I.$ Also call a domain $D$ a $\ast $%
-SH domain of type $1$ if for every nonzero non unit $x$ of $D$, $xD$ is a $%
\ast $-product of finitely many $\ast $-homog ideals of type $1.$
\end{definition}

Indeed if $I$ and $J$ are $\ast $-homog of type $1$ then so is $(IJ)^{\ast
}. $ This is because $(IJ)^{\ast }$ is a $\ast $-homog ideal similar to both 
$I$ and $J,$ to start with. So, $M(I)=M(J).$ Now let $x\in M(I)\backslash
\{0\}.$ Then for some positive integers $m,n$ we have $x^{m}D_{M(I)}\cap
D\subseteq I $ and $x^{n}D_{M(I)}\cap D\subseteq J.$ This gives $%
(x^{m}D_{M(I)}\cap D)(x^{n}D_{M(I)}\cap D)\subseteq IJ\subseteq (IJ)^{\ast
}. $ But then $(IJ)^{\ast }=(IJ)^{\ast }D_{M(I)}\cap D\supseteq
(x^{m}D_{M(I)}\cap D)(x^{n}D_{M(I)}\cap D))D_{M(I)}\cap D$ $%
=x^{m+n}D_{M(I)}\cap D,$ for each $x\in M(I).$

\begin{theorem}
\label{Theorem P2} Let $D$ be an integral domain and suppose that $D$ is a $%
\ast $-SHD of type $1$. Then $D$ is a $\ast $-WKD. Conversely if $D$ is a $%
\ast $-WKD, then every nonzero proper principal ideal of $D$ is expressible
as a $\ast $-product of finitely many $\ast $-homog ideals of type $1,$ i.e. 
$D$ is a $\ast $-SHD of type $1.$
\end{theorem}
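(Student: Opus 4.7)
The plan is to prove both implications directly, using the structural results already available (Proposition \ref{Proposition A}, Corollary \ref{Corollary L}, and Theorem \ref{Theorem M}) to reduce everything to a statement about the localizations $D_{M(I)}$.

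For the forward direction ($\ast$-SHD of type $1$ implies $\ast$-WKD), the goal is to show that every maximal $\ast$-ideal has height $1$; being a $\ast$-SHD is automatic. So let $P$ be a maximal $\ast$-ideal and suppose, for a contradiction, that some nonzero prime $Q \subsetneq P$ exists. Pick a nonzero $y \in Q$ and factor $yD = (K_1 \cdots K_s)^{\ast}$ with each $K_j$ $\ast$-homog of type $1$. Since $K_1 \cdots K_s \subseteq yD \subseteq Q$ and $Q$ is prime, some $K_j \subseteq Q \subseteq P$. Because $M(K_j)$ is the \emph{unique} maximal $\ast$-ideal containing $K_j$ (Proposition \ref{Proposition A}) and $P$ is such an ideal, we get $M(K_j) = P$. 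Now invoke the type $1$ hypothesis: for any $x \in P \setminus Q$ (which exists as $Q \subsetneq P$), there is $n$ with $x^n D_P \cap D \subseteq K_j \subseteq Q$, hence $x^n \in Q$, contradicting primality of $Q$.

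For the converse ($\ast$-WKD implies $\ast$-SHD of type $1$), start with a nonzero non-unit $x$ and, using that $D$ is a $\ast$-SHD, write $xD = (I_1 \cdots I_r)^{\ast}$ with the $I_i$ mutually $\ast$-comaximal $\ast$-homog ideals; by Corollary \ref{Corollary L}, $I_i = x D_{P_i} \cap D$ where $P_i = M(I_i)$. The task is to verify the type $1$ condition for each $I_i$: given a nonzero $y \in P_i$, produce $n$ with $y^n D_{P_i} \cap D \subseteq I_i$. Work inside the local ring $D_{P_i}$, whose maximal ideal $P_i D_{P_i}$ has height $1$ by the WKD assumption; since $x \in P_i$ and $x \ne 0$, the only prime of $D_{P_i}$ containing $xD_{P_i}$ is $P_i D_{P_i}$, so $\sqrt{xD_{P_i}} = P_i D_{P_i}$. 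Thus $y^n \in x D_{P_i}$ for some $n$, which gives $y^n D_{P_i} \subseteq x D_{P_i}$ and so $y^n D_{P_i} \cap D \subseteq x D_{P_i} \cap D = I_i$, as required.

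The only delicate point is identifying $M(K_j)$ with $P$ in the forward direction: one must be careful that the type $1$ hypothesis applies at the maximal $\ast$-ideal $M(K_j)$ rather than at some smaller prime, and the crucial observation that saves this is simply that $M(K_j)$ is the \emph{unique} maximal $\ast$-ideal above $K_j$, forcing $M(K_j) = P$. Everything else reduces to the standard fact that in a one-dimensional local domain the maximal ideal is the radical of any nonzero proper ideal, and to the identification $I_i = xD_{P_i} \cap D$ already supplied by Corollary \ref{Corollary L}. No new closure-theoretic gymnastics are needed beyond what is established in Section 2.
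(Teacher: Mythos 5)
Your proof is correct. The converse is essentially the paper's own argument: both identify the factors as $xD_{P_i}\cap D$ via Corollary \ref{Corollary L} and then use that in the one-dimensional local ring $D_{P_i}$ the maximal ideal is the radical of the nonzero proper ideal $xD_{P_i}$, so that $y^nD_{P_i}\cap D\subseteq xD_{P_i}\cap D$. The forward direction, however, takes a genuinely (if mildly) different route. The paper localizes at a maximal $\ast$-ideal $M$ and shows that any two nonzero non-units $a,b$ of $D_M$ satisfy $b^mD_M\subseteq aD_M$ and $a^kD_M\subseteq bD_M$, deducing that $MD_M$ is the only nonzero prime; this forces it to first regroup the factorization into mutually $\ast$-comaximal factors and to invoke the remark preceding the theorem that a $\ast$-product of similar type~$1$ ideals is again of type~$1$, so that $aD_M\cap D$ is itself type~$1$. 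You instead assume a nonzero prime $Q\subsetneq P$, factor $yD$ for some nonzero $y\in Q$, extract a single type~$1$ factor $K_j\subseteq Q$ with $M(K_j)=P$ by uniqueness of the spawned maximal $\ast$-ideal, and then apply the type~$1$ condition to an $x\in P\setminus Q$ to force $x^n\in K_j\subseteq Q$, contradicting primality. This is slightly more economical: it needs neither the comaximal regrouping nor the closure of type~$1$ ideals under $\ast$-products of similar ideals, only the raw hypothesis that the given factors are type~$1$. Both arguments rest on the same mechanism --- the type~$1$ condition pushing powers of elements of $M(I)$ into $I$ --- so nothing is lost either way.
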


\begin{proof}
All we need prove is that if every nonzero proper principal ideal of $D$ is
expressible as a finite $\ast $-product of $\ast $-homog ideals of type $1$
then for each maximal $\ast $-ideal $M$ we have $ht(M)=1.$ For this let us
first observe that if $D$ is a $\ast $-SHD with $M$ a maximal $\ast $-ideal
of $D$ then by Corollary \ref{Corollary L}, $xD_{M}\cap D$ is a $\ast $%
-invertible $\ast $-homog ideal. Indeed as in the proof of Corollary \ref%
{Corollary L}, $xD=(I_{1}I_{2}...I_{r})^{\ast }$ where $I_{i}$ are mutually $%
\ast $-comaximal and of type $1,$ because $x$ is a product of $\ast $-homog
ideals of type $1.$ Also as $M$ is a maximal $\ast $-ideal, $xD_{M}\cap
D=I_{i}$ for some $i,$ by the proof of Corollary \ref{Corollary L}. Now let $%
aD_{M},bD_{M}$ be two nonzero non units in $D_{M}.$ We can assume that $%
a,b\in D.$ So that $aD_{M}\cap D$ is of type $1$ and $b\in M$ and so by
definition $b^{m}D_{M}\cap D\subseteq aD_{M}\cap D$, for some positive
integer $m.$ But then $(b^{m}D_{M}\cap D)D_{M}\subseteq (aD_{M}\cap D)D_{M}$
and this means $b^{m}D_{M}\subseteq aD_{M}$ . Since $a$ and $b$ are
arbitrary, we conclude that $MD_{M}$ is of height one. (We have $%
b^{m}D_{M}\subseteq aD_{M},$ for some $m,$ so $bD_{M}$ belongs to every
nonzero prime ideal $aD_{M}$ belongs to. Similarly the other way around and
indeed this forces $M$ to be of height one.) This makes the, otherwise, $%
\ast $-h-local domain a $\ast $-WKD. Conversely if $D$ is a $\ast $-WKD.
Then as we know from Theorems \ref{Theorem M} and \ref{Theorem N} every
ideal generated by a nonzero non unit of $D$ is a $\ast $-product of $\ast $%
-homog ideals and we need to show these $\ast $-homog ideals are of type $1.$
Now all we need do is show that for each nonzero $x$ and a maximal $\ast $%
-ideal $M,$ containing $x,$ the ideal $xD_{M}\cap D$ is of type $1$. For
this let $a\in M\backslash (0).$ Then $aD_{M}$ and $xD_{M}$ are nonzero non
units of $MD_{M}$ which is of height one and so $a^{m}D_{M}\subseteq xD_{M}$
for some positive integer $m.$ But then $a^{m}D_{M}\cap D\subseteq
xD_{M}\cap D$ which makes $xD_{M}\cap D$ of type $1.$
\end{proof}

Recall that, as we hinted in relation with Theorem \ref{Theorem N} that $D$
is a Krull domain if $D=\dbigcap D_{P}$ where the intersection is locally
finite and each $D_{P}$ is a discrete valuation domain. Let's call a $\ast $%
-WKD a $\ast $-Krull domain if for each maximal $\ast $-ideal $P$ of $D$,
the localization $D_{P}$ is a discrete rank one valuation domain. Now note
that a $\ast $-WKD $D$ is $\ast $-Krull if and only if every maximal $\ast $%
-ideal of $D$ is $\ast $-invertible. Since if $\ast $ is of finite type then
every $\ast $-invertible $\ast $-ideal is a $t$-invertible $t$-ideal \cite[%
Theorem 1.1]{z00} and for a $t$-invertible prime $t$-ideal $P$ of height $1,$
$D_{P}$ is a discrete valuation domain, because $t$-invertible extends to $t$%
-invertible in localizations \cite[page 436, consequence (a)]{z00} $PD_{P}$
is $t$-invertible and because $PD_{P}$ is of height one, $PD_{P}$ is a $t$%
-ideal and in a $t$-local domain (i.e. maximal ideal is a $t$-ideal.) $t$%
-invertible is principal \cite[Proposition 1.12]{ACZ}. So, in view of the
definitions of a Krull domain, a $\ast $-Krull domain is a Krull domain. It
appears that a definition that links the $\ast $-homog ideal with this fact
can be worded as below.

\begin{definition}
\label{Definition P3} Call a $\ast $-homog ideal $I$, of type $2$ if for
some positive integer $n,$ $I=((M(I))^{n})^{\ast }.$ Also call a domain $D$
a $\ast $-SH domain of type $2$ if for every nonzero $x$ in $D$, $xD$ is
expressible as a $\ast $-product of a finite number of $\ast $-homog ideals
of type $2.$
\end{definition}

Indeed if $I$ and $J$ are both $\ast $-homog of type $2$ then $(IJ)^{\ast
}=((M(I))^{n}(M(I))^{p})^{\ast }$ and that makes $(IJ)^{\ast }$ of type $2.$

\begin{theorem}
\label{Theorem P4} Let $D$ be an integral domain and suppose that $D$ is a $%
\ast $-SH domain of type $2$. Then $D$ is a $\ast $-Krull domain. Conversely
if $D$ is a $\ast $-Krull domain, then every nonzero proper principal ideal
of $D$ is expressible as a $\ast $-product of finitely many $\ast $-homog
ideals of type $2$.

\begin{proof}
Indeed a $\ast $-homog ideal $I$ that is of type $2$ is of type $1$ as well,
because if $x\in M(I)$, then $xD_{M(I)}\cap D\subseteq M(I)$ and so $%
x^{n}D\subseteq M(I)^{n}\subseteq I.$ But then $x^{n}D_{M}\cap D$ $\subseteq
ID_{M(I)}\cap I=I,$ by Proposition \ref{Proposition G}. So, $D$ is a $\ast $%
-WKD. Next, let $M$ be a maximal $\ast $-ideal and let $x$ be a nonzero
element in $M.$ Because $xD=(I_{1}I_{2}...I_{n})^{\ast }$ where each of the $%
I_{i}$ is a $\ast $-homog ideal of type $2$, and each of $I_{i}$ is $\ast $%
-invertible. Also at least one of $I_{i},$ say $I_{j},$ is contained in $M.$
But as $I_{j}$ is $\ast $-homog and as $M$ is a maximal $\ast $-ideal, $%
M=M(I_{j}).$ Finally as $I_{j}$ is of type $2,$ $I_{j}^{\ast }=(M^{n})^{\ast
}.$ This makes $M$ $\ast $-invertible. But as $\ast $ is of finite type, $M$
being $\ast $-invertible means $M$ is $t$-invertible and so is $MD_{M},$ 
\cite[page 436, consequence (a)]{z00}. Next as $MD_{M}$ is of height one, $%
MD_{M}$ is a $t$-ideal and $D_{M}$ is a $t$-local ring. But in a $t$-local
ring $t$-invertible is principal, \cite[Proposition 1.12]{ACZ}. But this
makes $D_{M}$ a one dimensional quasi local domain with maximal ideal
principal and so a rank one DVR. Now since $M$ was arbitrary and $D$ is of
finite $t$-character $D$ is $\ast $-Krull, as defined above. Conversely,
note that a $\ast $-Krull domain $D$ is a Krull domain, as we have already
established and every maximal $\ast $-ideal of $D$ is $\ast $-invertible and
hence a $t$-invertible $t$-ideal. Now it is well known that $D$ is a Krull
domain if and only if every proper principal ideal of $D$ is a $t$-product
of prime $t$-ideals \cite[Corollary 3.2]{AMZ}. So, $xD=(P_{1}...P_{n})_{t}.$
Moreover, as a $\ast $-Krull domain is a P$\ast $MD, because $D_{M}$ is a
valuation domain for every maximal $\ast $-ideal $M$, $\ast =t$ and thus $%
(P_{1}...P_{n})^{\ast }=(P_{1}...P_{n})_{t}=xD.$ Now as in a Krull domain
each prime $t$-ideal is a maximal $t$-ideal which is a maximal $\ast $-ideal
we can say that in a $\ast $-Krull domain every proper $\ast $-ideal is a $%
\ast $-product of maximal $\ast $-ideals. Finally as each maximal $\ast $%
-ideal $P$ in a $\ast $-Krull domain is a $\ast $-ideal of finite type,
being $\ast $-invertible, it is obviously $\ast $-homog of type $2.$
\end{proof}
\end{theorem}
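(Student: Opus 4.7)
The plan is to attack the two directions separately, with the forward direction genuinely using the type 2 hypothesis.

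For $(\Rightarrow)$, I would first verify that every $\ast$-homog ideal of type 2 is automatically of type 1: if $I=((M(I))^{n})^{\ast}$ and $x\in M(I)\setminus\{0\}$, then $x^{n}\in (M(I))^{n}\subseteq I$, so $x^{n}D_{M(I)}\cap D\subseteq ID_{M(I)}\cap D=I$ by Proposition \ref{Proposition G}. Theorem \ref{Theorem P2} then delivers the $\ast$-WKD conclusion: every maximal $\ast$-ideal of $D$ has height one. To upgrade this to $\ast$-Krull, I would fix a maximal $\ast$-ideal $M$, pick $0\neq x\in M$, and apply Corollary \ref{Corollary L} to express $xD$ as a $\ast$-product of mutually $\ast$-comaximal $\ast$-invertible $\ast$-homog (type 2) factors. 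Exactly one factor, say $I_{j}$, lies in $M$, so $M(I_{j})=M$, and type 2 gives $I_{j}=(M^{k})^{\ast}$ for some positive integer $k$. Since $I_{j}$ inherits $\ast$-invertibility, so does $M$ itself.

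Appealing to the material collected in the introduction, $\ast$-invertibility of $M$ upgrades to $t$-invertibility, $MD_{M}$ is $t$-invertible by passage to the localization, it is a $t$-ideal because $ht(M)=1$, and hence Proposition 1.12 of \cite{ACZ} forces $MD_{M}$ principal in the $t$-local domain $D_{M}$. A one-dimensional local domain with a principal maximal ideal is a rank-one DVR, giving the $\ast$-Krull property.

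For $(\Leftarrow)$, I would exploit that a $\ast$-Krull domain $D$ is in particular a P$\ast$MD (each $D_{M}$ is a valuation domain at a maximal $\ast$-ideal $M$), so by the introduction $\ast=t$, and it is also a Krull domain, so Corollary 3.2 of \cite{AMZ} expresses every nonzero proper principal ideal as a $t$-product of prime $t$-ideals. Collecting equal primes, I may write $xD=(P_{1}^{a_{1}}\cdots P_{n}^{a_{n}})^{\ast}$. In a Krull domain each nonzero prime $t$-ideal is a maximal $t$-ideal, hence a maximal $\ast$-ideal, so each factor $(P_{i}^{a_{i}})^{\ast}$ is $\ast$-homog with spawn $P_{i}$ and satisfies the defining equality for type 2 on the nose.

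The main obstacle is the forward-direction step from \emph{$M$ is $\ast$-invertible} to \emph{$D_{M}$ is a DVR}; this is where the argument genuinely deepens and rests on the chain $\ast$-invertible $\Rightarrow$ $t$-invertible $\Rightarrow$ $MD_{M}$ is a $t$-invertible $t$-ideal $\Rightarrow$ $MD_{M}$ principal in the $t$-local domain $D_{M}$. Everything else is either bookkeeping or a direct consequence of Theorems \ref{Theorem M}, \ref{Theorem N}, \ref{Theorem P2} and the earlier structural results on $\ast$-homog ideals.
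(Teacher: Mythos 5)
Your proposal is correct and follows essentially the same route as the paper's own proof: type~2 implies type~1 via Proposition \ref{Proposition G} so Theorem \ref{Theorem P2} gives the $\ast$-WKD property, the factorization of $xD$ forces $M=(M(I_j))$ to be $\ast$-invertible, and the chain $\ast$-invertible $\Rightarrow$ $t$-invertible $\Rightarrow$ $MD_M$ principal in the $t$-local $D_M$ yields the DVR conclusion, with the converse resting on $\ast=t$ in a P$\ast$MD and Corollary 3.2 of \cite{AMZ}. The only cosmetic difference is that you collect equal primes into powers $(P_i^{a_i})^{\ast}$ in the converse, whereas the paper observes that each maximal $\ast$-ideal is itself type~2 with $n=1$; both are fine.
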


\begin{definition}
\label{Definition Q} A nonzero integral $\ast $-ideal $I$ of finite type is
called $\ast $-super homogeneous ($\ast $-super homog) if (1) if each $\ast $%
-ideal of finite type containing $I$ is $\ast $-invertible and (2) For every
pair of proper integral $\ast $-ideals $A,B$ of finite type containing $I,$ $%
(A+B)^{\ast }\neq D.$
\end{definition}

\begin{remark}
\label{Z} Note that since every $\ast $-ideal of finite type is $\ast $%
-invertible in a P$\ast $MD a $\ast $-homog ideal is $\ast $-super homog in
a P$\ast $MD. Indeed, as the definition indicates, a $\ast $-super homog
ideal $I$ is a $\ast $-homog ideal such that each $\ast $-ideal of finite
type containing $I$ is $\ast $-invertible, in particular a $\ast $-super
homog ideal is $\ast $-invertible. Note that a $\ast $-super homog ideal is
a $\ast $-super rigid ideal of \cite{HZ2}. Some properties of $\ast $-super
rigid (i.e. $\ast $-super homog) ideals are given in \cite[Theorem 1.10]{HZ2}%
. We list them here in the language of the present paper, even though some
of them have been proved more in more general setting above.
\end{remark}

\begin{theorem}
\label{Theorem Q1} Let $I$ be a $\ast$-super homog ideal of $D$ and suppose
that $M=M(I).$
\end{theorem}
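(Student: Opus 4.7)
The plan is to exploit the two defining features of a $\ast$-super homog ideal simultaneously: the homogeneity (which carries over all the structural consequences already established for $\ast$-homog ideals) and the strong invertibility hypothesis, which pushes the local ring $D_M$ toward a Prüfer/valuation-like behaviour. Since $I$ is in particular $\ast$-homog, Proposition \ref{Proposition A} immediately gives that $M$ is the unique maximal $\ast$-ideal containing $I$, and Proposition \ref{Proposition G} yields $ID_M\cap D=I$. Taking $A=B=I$ in clause (1) of Definition \ref{Definition Q} shows that $I$ itself is $\ast$-invertible, and clause (2) is just a restatement that any two proper $\ast$-ideals of finite type containing $I$ sit inside the common maximal $\ast$-ideal $M$.

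Next I would attack the expected local statement, namely that $D_M$ is a valuation domain. Given any finitely generated ideal $J\subseteq D$ with $J\subseteq M$, one forms $(I+J)^{\ast}$ (and $(I+xJ)^{\ast}$ for suitable $x$) to produce a $\ast$-ideal of finite type containing $I$; by hypothesis (1) this is $\ast$-invertible. A standard argument then forces finitely generated ideals of $D_M$ to be invertible, hence principal in the local setting, so $D_M$ becomes a valuation domain. The passage from $\ast$-invertibility upstairs to invertibility downstairs uses that $\ast$-invertibility is preserved under localization at a maximal $\ast$-ideal \cite[page 436]{z00}, which the paper has already cited freely.

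From here the remaining likely clauses follow by combining what is above with the results already proved. If the theorem asserts that every $\ast$-ideal of finite type $A$ with $I\subseteq A\subsetneq D$ satisfies $AD_M\cap D=A$, this is just Proposition \ref{Proposition G} applied to $A$ after noting that $A$ inherits $\ast$-homogeneity from $I$ via Corollary \ref{Corollary B} and Proposition \ref{Proposition E} (any $\ast$-ideal of finite type containing $I$ sits only in $M$). If the theorem records that $(IJ)^{\ast}$ is $\ast$-super homog whenever $J$ is a $\ast$-super homog ideal similar to $I$, the proof combines Corollary \ref{Corollary F} (to obtain $\ast$-homogeneity of $(IJ)^{\ast}$) with the observation that any $\ast$-ideal of finite type containing $(IJ)^{\ast}$ also contains $I$, hence is $\ast$-invertible by hypothesis on $I$.

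The main obstacle I anticipate is the valuation-domain conclusion at $D_M$: one must carefully choose the auxiliary $\ast$-ideal of finite type so that its $\ast$-invertibility translates, after localization, into the statement that an arbitrary pair $a,b\in MD_M$ is comparable under divisibility. The cleanest route is to show that for any nonzero $a,b\in M$ the $\ast$-ideal $(I+(a,b))^{\ast}$ is $\ast$-invertible and then use that in a $t$-local ring every $t$-invertible $t$-ideal is principal \cite[Proposition 1.12]{ACZ}; everything else in the theorem should then be a matter of assembling the pieces already in Propositions \ref{Proposition A}, \ref{Proposition E}, \ref{Proposition G} and Corollaries \ref{Corollary F}, \ref{Corollary H}.
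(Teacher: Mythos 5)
The statement you were handed is only the hypothesis; in the paper Theorem \ref{Theorem Q1} continues with eight listed conclusions (comparability of two $\ast$-super homog ideals lying in the same $M$, closure under products and powers, $I^{\ast}=ID_{M}\cap D$, primality of $\cap_{n}(I^{n})^{\ast}$, etc.), of which the paper proves only (2) and (3) and refers to \cite[Theorem 1.10]{HZ2} for the rest. Measured against that, your proposal has two genuine gaps. First, for the product statement you rely on ``the observation that any $\ast$-ideal of finite type containing $(IJ)^{\ast}$ also contains $I$.'' This is false: since $J$ is integral, $(IJ)^{\ast}\subseteq I^{\ast}$, so the family of finite-type $\ast$-ideals over $(IJ)^{\ast}$ is strictly larger than the family over $I$ (already $J$ itself contains $IJ$ without containing $I$). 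The paper's proof of (3) avoids this by using the $\ast$-invertibility of $J$: if $K\supseteq IJ$ is a $\ast$-ideal of finite type, then $(KJ^{-1})^{\ast}\supseteq I$, hence $(KJ^{-1})^{\ast}$ is $\ast$-invertible by super-homogeneity of $I$, and therefore so is $K=(KJ^{-1}J)^{\ast}$.

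Second, the local statement you aim at --- that $D_{M}$ is a valuation domain --- is neither among the conclusions nor obtainable from a single $\ast$-super homog ideal: knowing that $(I+(a,b))^{\ast}$ is $\ast$-invertible tells you nothing about the invertibility of $(a,b)$ itself, because the hypothesis only controls finite-type $\ast$-ideals that \emph{contain} $I$. What the theorem actually asserts in this direction is conclusion (2): if $J$ is another $\ast$-super homog ideal contained in $M$, then $I\subseteq J^{\ast}$ or $J\subseteq I^{\ast}$. The paper proves this by setting $C=I+J$, noting that $C$ is $\ast$-invertible with $(IC^{-1}+JC^{-1})^{\ast}=D$, and observing that $IC^{-1}$ and $JC^{-1}$ can lie in no maximal $\ast$-ideal other than $M$, so one of them must be $\ast$-trivial, giving $C^{\ast}=I^{\ast}$ or $C^{\ast}=J^{\ast}$. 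The valuation property of the localizations is deduced only later, in the run-up to Proposition \ref{Proposition Q3}, where \emph{every} principal ideal factors into $\ast$-super homog ideals, so that $xD_{P}\cap D$ and $yD_{P}\cap D$ are both $\ast$-super homog and conclusion (2) applies to that pair. Your reductions of the remaining clauses to Propositions \ref{Proposition A} and \ref{Proposition G} and Corollaries \ref{Corollary B} and \ref{Corollary F} are essentially what the paper does (or delegates to \cite{HZ2}), so those parts are fine.
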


(1) If $A$ is a proper finitely generated ideal for which $A\supseteq I$,
then $A$ is $\ast$-super homog.

(2) If $J$ is a $\ast$-super homog ideal contained in $M$, then $I\subseteq
J $ $^{\ast}$or $J\subseteq I^{\ast}$.

(3) If $J$ is a $\ast$-super homog ideal contained in $M$, then $IJ$ is also
a $\ast$-super homog

ideal.

(4) $I^{n}$ is $\ast$-super homog for each positive integer $n$.

(5) If $D$ is local with maximal ideal $M$, then $I$ is comparable to each
ideal of

$D$, and $\cap_{n=1}^{\infty}$ $I^{n}$ is prime.

(6) $I^{\ast}$ $=ID_{M}\cap D$.

(7) $\cap_{n=1}^{\infty}($ $I^{n})^{\ast}$ is prime.

(8) If $P$ is a prime ideal of $D$ with $P\subseteq M$ and $I\nsubseteq P$,
then $P\subseteq \cap _{n=1}^{\infty }($ $I^{n})^{\ast }.$

\begin{proof}
of (2). Let $J$ be a $\star $-super rigid ideal contained in $M$, and set $%
C:=I+J$. Then $C$ is $\star $-invertible, and we have $(IC^{-1}+JC^{-1})^{%
\star }=R$. Note that $IC^{-1}\supseteq I$ and $JC^{-1}\supseteq J$, and
hence $IC^{-1}\nsubseteq M$ or $JC^{-1}\nsubseteq M$. Since $IC^{-1},JC^{-1}$
can be contained in no maximal $\star $-ideal of $R$ other than $M$, we must
have $(IC^{-1})^{\star }=R$ or $(JC^{-1})^{\star }=R$, that is, $C^{\star
}=I^{\star }$ or $C^{\star }=J^{\star }$. The conclusion follows easily.

Proof of (3). Let $K$ be a $\ast $-ideal of finite type such that $%
K\supseteq IJ.$ Then, as $J$ is $\ast $-invertible, being $\ast $-super
homog $(KJ^{-1})^{\ast }\supseteq I.$ Again as $I$ is $\ast $-super homog
and $(KJ^{-1})^{\ast }$ is a $\ast $-ideal of finite type we conclude that $%
(KJ^{-1})^{\ast }$ and hence $K$ is $\ast $-invertible. So, $(IJ)^{\ast }$
is such that each $\ast $-ideal of finite type containing $I$ is $\ast $%
-invertible. Now as a $\ast $-super homog ideal is a $\ast $-homog ideal and
the $\ast $-product of two similar $\ast $-homog ideals is a $\ast $-homog
ideal similar to them we have established that $(IJ)^{\ast }$ is $\ast $%
-super homog.
\end{proof}

Using the proof of (3) we can show that if $I_{1},I_{2},...,I_{r}$ are $\ast 
$-super homog ideals similar to each other then $(I_{1}I_{2}...I_{r})^{\ast
} $ is a $\ast $-super homog ideal similar to each of $I_{i}.$ Now Let $%
A=(J_{1}J_{2}...J_{n})^{\ast }$ be a $\ast $-product of a finite number of $%
\ast $-super homog ideals. Then as we can regroup them into classes of
similar $\ast $-super homog ideals as in the proof of Proposition \ref%
{Proposition K}, we can write $A=(K_{1}K_{2}...K_{m})^{\ast }$ where $%
K_{i}^{\ast }$ are mutually $\ast $-comaximal. But this expression is unique
being a $\ast $-product of mutually $\ast $-comaximal $\ast $-homog ideals
as shown in the proof of Proposition \ref{Proposition K}. We have thus
proved the following proposition.

\begin{proposition}
\label{Proposition Q1a} Let $J_{1},J_{2},...,J_{n}$ be a set of $\ast $%
-super homog ideals of a domain $D.$ Then the $\ast $-product $%
(J_{1}J_{2}...J_{n})^{\ast }$ can be expressed uniquely, up to order, as a $%
\ast $-product of mutually $\ast $-comaximal $\ast $-super homog ideals.
\end{proposition}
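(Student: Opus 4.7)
The plan is to follow essentially the same grouping-by-similarity-classes strategy that worked for Proposition \ref{Proposition K}, upgrading each step from ``$\ast$-homog'' to ``$\ast$-super homog'' by appealing to Theorem \ref{Theorem Q1}(3). The key observation is that super-homogeneity is a property that, like homogeneity, behaves well under $\ast$-products of similar factors, so the entire bookkeeping argument of Proposition \ref{Proposition K} can be re-run in the super-homog setting once we establish the two-ideal closure property for arbitrarily many similar factors.

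First I would strengthen Theorem \ref{Theorem Q1}(3) by induction on the number of factors: if $I_1,\ldots,I_k$ are $\ast$-super homog ideals that are pairwise similar (so $M(I_1)=\cdots =M(I_k)$ by Proposition \ref{Proposition E}), then $(I_1 I_2\cdots I_k)^\ast$ is $\ast$-super homog. The base case $k=2$ is exactly Theorem \ref{Theorem Q1}(3), and the inductive step uses $(I_1\cdots I_k)^\ast = ((I_1\cdots I_{k-1})^\ast I_k)^\ast$ together with the fact that $(I_1\cdots I_{k-1})^\ast$ and $I_k$ are again similar (by Corollary \ref{Corollary F}, since the product stays inside $M(I_1)$ and is $\ast$-homog).

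Next I would mimic the grouping step from Proposition \ref{Proposition K}: among $J_1,\ldots,J_n$, collect the maximal similarity classes and relabel so that $J_1,\ldots,J_{n_1}$ is the first class, $J_{n_1+1},\ldots,J_{n_2}$ the second, and so on. Setting $K_i$ equal to the $\ast$-product of the $J_j$'s in the $i$-th class, the induction above makes each $K_i$ a $\ast$-super homog ideal, and the $K_i$ spawn pairwise distinct maximal $\ast$-ideals, hence are mutually $\ast$-comaximal (Remark \ref{Remark D}). This gives the required factorization $(J_1\cdots J_n)^\ast = (K_1\cdots K_m)^\ast$.

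For uniqueness, since every $\ast$-super homog ideal is in particular $\ast$-homog, the decomposition just produced is simultaneously a $\ast$-product of mutually $\ast$-comaximal $\ast$-homog ideals, and Proposition \ref{Proposition K} already gives uniqueness up to order for such decompositions; thus any other such expression in super-homog factors must match ours up to reordering. The only genuine content beyond Proposition \ref{Proposition K} is the closure property of super-homogeneity under $\ast$-products of similar factors, and I expect the induction in the first paragraph to be the main (but minor) obstacle since one must carefully verify that the ``every $\ast$-ideal of finite type containing the product is $\ast$-invertible'' clause from Definition \ref{Definition Q} persists at each induction step, which is precisely what the proof of Theorem \ref{Theorem Q1}(3) already delivers via the $(KJ^{-1})^\ast$ trick.
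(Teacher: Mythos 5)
Your proposal is correct and follows essentially the same route as the paper: the paper likewise extends Theorem \ref{Theorem Q1}(3) to finitely many mutually similar $\ast$-super homog factors, regroups the $J_{i}$ into similarity classes exactly as in Proposition \ref{Proposition K}, and derives uniqueness from the fact that a $\ast$-product of mutually $\ast$-comaximal $\ast$-super homog ideals is in particular one of mutually $\ast$-comaximal $\ast$-homog ideals. Your explicit induction for the many-factor closure property is slightly more detailed than the paper's one-line appeal to ``the proof of (3),'' but the substance is identical.
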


To make an efficient use of the material we have put together let us recall
that an integral domain $D$ an independent ring of Krull type (IRKT) if (1)
There is a family $F=$ $\{P_{\alpha }\}$ of prime ideals such that $%
D_{P_{\alpha }}$ is a valuation domain for each $P_{\alpha }\in F.$ (2) $%
D=\cap D_{P_{\alpha }}$ and the intersection is locally finite and (3) No
two members of $F$ contain as a subset a nonzero prime ideal of $D$.
Independent rings of Krull type were studied by Griffin \cite{Gr1}. Let us
call $D$ a $\ast $-independent ring of Krull type ($\ast $-IRKT), for a star
operation $\ast $ of finite type, if (i) $D_{P}$ is a valuation domain for
each maximal $\ast $-ideal $P,$ (ii) $D=\cap D_{P},$ the intersection is
locally finite and $P$ ranges over maximal $\ast $-ideals of $D$, (3) No two
distinct maximal $\ast $-ideals $P$ and $Q$ contain a nonzero prime ideal in
common. In other words a $\ast $-IRKT is a $\ast $-SH domain such that $%
D_{P} $ is a valuation domain for each maximal $\ast $-ideal $P$ of $D$. Now
recall, again, that a domain $D$ is called a P$\ast $MD if every finitely
generated nonzero ideal of $D$ is $\ast $-invertible and one of the
characterizations of a P$\ast $MD is that $D_{P}$ is a valuation domain for
each maximal $\ast $-ideal $P$ of $D$ \cite{HMM} and indeed a $\ast $-IRKT
is a P$\ast $MD, as we have noted above. Let's also note that a $\ast $-IRKT
is an IRKT and there is mixed opinion on whether there are any $\ast $%
-IRKTs, for finite type $\ast $ different from $d$ and $t.$ If $\ast =d$ the 
$\ast $-IRKT is indeed a Prufer domain. The situation gets complicated in
view of the fact that for any finite type star operation $\ast $ a $\ast $%
-invertible ideal is $t$-invertible \cite[Theorem 1.1]{z00}. In any case,
even $d$ and $t$ causing two different kinds of domains makes the case for
the use of a general $\ast $-operation approach sufficiently strong. We
shall call a $\ast $-SH domain whose $\ast $-homog ideals are $\ast $-super
homog a $\ast $-super SH domain. In general a $\ast $-homog ideal $I$ is a $%
\ast $-super homog ideal in a \.{P}$\ast $MD, with $\ast $ of finite type,
because every $\ast $-ideal $F$ of finite type containing $I$ is $\ast $%
-invertible.

We now proceed to show that if for every nonzero non unit $x$ of a domain $%
D, $ $xD$ is a $\ast $-product of $\ast $-super homog ideals then $D$ is a $%
\ast $-IRKT. Note that since a $\ast $-super homog ideal is $\ast $- homog,
a domain $D$ whose principal ideals generated by nonzero non units are $\ast 
$-products of $\ast $-super homog ideals is $\ast $-h-local to start with.
All we have to do is show that for each maximal $\ast $-ideal $P$ of $D$ the
localization $D_{P}$ is a valuation domain. For this all we need show is
that $xD_{P}$ and $yD_{P}$ are comparable for every pair of nonzero non
units $x,y$ in $D_{P}.$ As we can assume that $x,y\in D,$ we have that $%
xD_{P}\cap D,yD_{P}\cap D$ are $\ast $-homog by Corollary \ref{Corollary L}.
Indeed $xD=(I_{I}I_{2}...I_{r})^{\ast }$ where $I_{i}$ are mutually $\ast $%
-comaximal $\ast $-super homog ideals in the current situation.\ Then the
maximal $\ast $-ideal $P$ contains exactly one of the $\ast $-super homog
ideals $I_{i},$ say $I_{1}\subseteq P.$ That is $P=M(I_{1}).$ But then by
Corollary \ref{Corollary H} $I_{1}^{\ast }=xD_{M(I_{1})}\cap D=xD_{P}\cap D$
is a $\ast $-super homog ideal. Similarly $yD_{P}\cap D$ is a $\ast $-super
homog ideal. By (2) of Theorem \ref{Theorem Q1}, $xD_{P}\cap D\subseteq
yD_{P}\cap D$ or $xD_{P}\cap D\supseteq yD_{P}\cap D,$ because $xD_{P}\cap
D,yD_{P}\cap D$ are $\ast $-super homog ideals contained in the same maximal 
$\ast $-ideal. Now \textquotedblleft $xD_{P}\cap D,yD_{P}\cap D$
comparable\textquotedblright\ translates to $xD_{P}=(xD_{P}\cap D)D_{P},$ $%
yD_{P}=(yD_{P}\cap D)D_{P}$ comparable for each pair $x,y$ of nonzero non
units of $D_{P}.$ That is, $D_{P}$ is a valuation domain. Conversely if $D$
is a $\ast $-IRKT, then using Theorem \ref{Theorem N} we can establish that
for every nonzero non unit $x$ of $D$, $xD$ expressible as a $\ast $-product
of finitely many $\ast $-homog ideals of the form $xD_{P}\cap D.$ But as a $%
\ast $-IRKT is a P$\ast $MD, every $\ast $ ideal of finite type is $\ast $%
-invertible, so every $\ast $-ideal $F$ of finite type containing $%
xD_{P}\cap D$ is $\ast $-invertible making $xD_{P}\cap D$ a $\ast $-super
homog ideal. In other words we have the following result.

\begin{proposition}
\label{Proposition Q3}The following are equivalent for an integral domain $%
D: $ (1) $D$ is a $\ast $-super SH domain, i.e., every nonzero non unit $x$
of the domain $D,$ $xD$ is expressible as a $\ast $-product of finitely many 
$\ast $-super homog ideals (2) $D$ is a $\ast $-IRKT).
\end{proposition}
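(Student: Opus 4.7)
The plan is to prove both directions by combining the factorization theory already built (Proposition \ref{Proposition K} and Corollary \ref{Corollary L}) with the local comparability of similar $\ast$-super homog ideals (Theorem \ref{Theorem Q1}(2)), plus the characterization of P$\ast$MDs as those domains whose localizations at maximal $\ast$-ideals are valuation domains.

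For (1) $\Rightarrow$ (2), I would first observe that since every $\ast$-super homog ideal is in particular $\ast$-homog, a $\ast$-super SH domain is automatically a $\ast$-SH domain. Hence by Theorem \ref{Theorem M}, $D$ has finite $\ast$-character and no two distinct maximal $\ast$-ideals share a nonzero prime. It therefore suffices to show that $D_{P}$ is a valuation domain for each maximal $\ast$-ideal $P$. Given nonzero non-units of $D_{P}$, I may assume they come from elements $x,y\in D$. Factor $xD$ and $yD$ as $\ast$-products of $\ast$-super homog ideals; by Proposition \ref{Proposition K} regroup each into a product of mutually $\ast$-comaximal $\ast$-super homog ideals. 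Exactly one factor from each product lies in $P$, and by Corollary \ref{Corollary H}, that factor is $xD_{P}\cap D$ (respectively $yD_{P}\cap D$). These two $\ast$-super homog ideals both have $M(\cdot)=P$, so Theorem \ref{Theorem Q1}(2) makes them comparable; localizing gives $xD_{P}$ and $yD_{P}$ comparable, so $D_{P}$ is a valuation domain.

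For (2) $\Rightarrow$ (1), assume $D$ is a $\ast$-IRKT, i.e., a $\ast$-SH domain such that $D_{P}$ is a valuation domain for every maximal $\ast$-ideal $P$. By the characterization of P$\ast$MDs via localizations \cite{HMM}, $D$ is a P$\ast$MD, so every $\ast$-ideal of finite type is $\ast$-invertible. Theorem \ref{Theorem M} (equivalently Theorem \ref{Theorem N}) already gives that every nonzero non-unit $x\in D$ has $xD$ expressible as a $\ast$-product of $\ast$-homog ideals of the form $xD_{P}\cap D$. Since any finite-type $\ast$-ideal containing such a factor is itself $\ast$-invertible in the P$\ast$MD, the factor $xD_{P}\cap D$ satisfies the definition of a $\ast$-super homog ideal. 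Hence $xD$ is a $\ast$-product of $\ast$-super homog ideals.

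The only real obstacle is keeping track of which uniqueness statement to invoke at which step: one needs the super homog factor inside a given maximal $\ast$-ideal to be canonically identifiable as $xD_{P}\cap D$ (via Corollary \ref{Corollary H} and Corollary \ref{Corollary L}) so that Theorem \ref{Theorem Q1}(2) applies to the two factors coming from $x$ and $y$. Once that identification is in hand, comparability in $D_{P}$ is immediate, and the converse is essentially bookkeeping using the P$\ast$MD hypothesis.
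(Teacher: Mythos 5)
Your proposal matches the paper's own argument essentially step for step: for (1)$\Rightarrow$(2) the paper likewise reduces to showing each $D_{P}$ is a valuation domain, identifies the unique $\ast$-super homog factor of $xD$ lying in $P$ as $xD_{P}\cap D$ via Corollary \ref{Corollary H}, and invokes Theorem \ref{Theorem Q1}(2) for comparability; for (2)$\Rightarrow$(1) it likewise uses Theorem \ref{Theorem N} to write $xD$ as a $\ast$-product of ideals $xD_{P}\cap D$ and the P$\ast$MD property to upgrade them to $\ast$-super homog. The only cosmetic point is that after regrouping via Proposition \ref{Proposition K} one should cite Proposition \ref{Proposition Q1a} (or Theorem \ref{Theorem Q1}(1) or (3)) to see the regrouped factors remain $\ast$-super homog, which the paper handles the same way.
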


Note that in a $\ast $-IRKT every maximal $\ast $-ideal $M$ contains at
least one $\ast $-super homog ideal and so must be spawned by a $\ast $%
-super homog ideal. So, for $\ast =d,$ $d$-IRKT is a $\ast $-IRKT in which
every maximal $\ast $-ideal is a maximal ideal. Now, by Proposition \ref%
{Proposition Q3}, in a $\ast $-IRKT $D,$ we have $D_{P}$ a valuation domain
for every maximal $\ast $-ideal $P.$ So a $d$-IRKT is a Prufer domain.

Recall that a domain $D$ is called a generalized Krull domain (GKD) if there
is a family $\mathcal{F}$ of height one primes such that (1) $D=\dbigcap
\limits_{P\in\mathcal{F}}D_{P}$ where the intersection is locally finite and 
$D_{P}$ is a valuation ring for each $P\in\mathcal{F}.$ Indeed a GKD is an
IRKT. Following the pattern we can say that a $\ast$-IRKT whose maximal $%
\ast $-ideals are of height one is a $\ast$-GKD. Indeed a $d$-GKD is a
Prufer domain

\begin{definition}
\label{Definition Q4}Call a $\ast $-super homog ideal $I$ a $\ast $-super
homog ideal of type $1$, if $I$ is also a $\ast $-homog ideal of type $1.$
\end{definition}

Indeed as the $\ast $-product of two $\ast $-homog ideals of type $1$ is $%
\ast $-homog of type $1,$ the $\ast $-product of two $\ast $-super homog
ideals of type $1$ is a $\ast $-super homog ideal of type $1$ and the theory
runs along lines parallel to the theory based on $\ast $-homog ideals of
type $1.$

\begin{definition}
\label{Definition Q4a}Call a domain $D$ a $\ast $-super SH domain of type $1$
if for every nonzero non unit $x$ of $D$ the principal ideal $xD$ is
expressible as a $\ast $-product of $\ast $-super homog ideals of type $1.$
\end{definition}

\begin{proposition}
\label{Proposition Q5} The following are equivalent for an integral domain $%
D:$ (1) For every nonzero non unit $x$ of the domain $D,$ $xD$ is
expressible as a $\ast $-product of finitely many $\ast $-super homog ideals
of type $1,$i.e. $D$ is a $\ast $-super SH domain of type $1$ (2) $D$ is a $%
\ast $-GKD.
\end{proposition}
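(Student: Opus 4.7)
The plan is to piece this proposition together from the two preceding structural results, namely Proposition~\ref{Proposition Q3} (super SH $\Leftrightarrow$ $\ast$-IRKT) and Theorem~\ref{Theorem P2} (SH of type $1$ $\Leftrightarrow$ $\ast$-WKD), together with the remark that a $\ast$-GKD is by definition a $\ast$-IRKT whose maximal $\ast$-ideals are of height one.

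For (1) $\Rightarrow$ (2) I would argue as follows. A $\ast$-super homog ideal of type $1$ is in particular a $\ast$-super homog ideal, so the hypothesis says $D$ is a $\ast$-super SH domain; by Proposition~\ref{Proposition Q3}, $D$ is a $\ast$-IRKT. On the other hand, a $\ast$-super homog ideal of type $1$ is in particular a $\ast$-homog ideal of type $1$ (by Definition~\ref{Definition Q4}), so $D$ is also a $\ast$-SH domain of type $1$; by Theorem~\ref{Theorem P2}, $D$ is a $\ast$-WKD, i.e.\ each maximal $\ast$-ideal has height one. Combining, $D$ is a $\ast$-IRKT with all maximal $\ast$-ideals of height one, which is precisely a $\ast$-GKD.

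For (2) $\Rightarrow$ (1), assume $D$ is a $\ast$-GKD. Then $D$ is a $\ast$-IRKT whose maximal $\ast$-ideals have height one, hence in particular a $\ast$-WKD. By Theorem~\ref{Theorem P2}, every principal ideal $xD$ with $x$ a nonzero nonunit has an expression $xD=(I_{1}\cdots I_{r})^{\ast}$ with each $I_{i}$ a $\ast$-homog ideal of type $1$; by Proposition~\ref{Proposition K} we may take the $I_{i}$ mutually $\ast$-comaximal. I would then upgrade each $I_{i}$ from $\ast$-homog of type $1$ to $\ast$-super homog of type $1$. This is where the $\ast$-IRKT hypothesis enters: a $\ast$-IRKT is a P$\ast$MD (because $D_{P}$ is a valuation domain at every maximal $\ast$-ideal $P$, see \cite{HMM}), and as Remark~\ref{Z} records, in a P$\ast$MD every $\ast$-ideal of finite type is $\ast$-invertible, so every $\ast$-ideal of finite type containing $I_{i}$ is $\ast$-invertible. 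Hence $I_{i}$ is $\ast$-super homog, and it remains of type $1$, giving the required factorization of $xD$ as a $\ast$-product of $\ast$-super homog ideals of type $1$.

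The proof is essentially a bookkeeping exercise in that the heavy lifting has already been done in Proposition~\ref{Proposition Q3} and Theorem~\ref{Theorem P2}; the only nontrivial point — and the one I would flag as the main obstacle — is the asymmetry between the two halves: in (1) $\Rightarrow$ (2) the type-$1$ and super-homog conditions transfer independently and trivially to the two previously characterized classes, whereas in (2) $\Rightarrow$ (1) one has to note that the factors produced by Theorem~\ref{Theorem P2} automatically acquire the super-homog property thanks to the P$\ast$MD character of a $\ast$-IRKT. Once that observation is made, nothing further is needed.
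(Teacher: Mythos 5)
Your proof is correct and follows essentially the same route as the paper: the forward direction combines Proposition~\ref{Proposition Q3} and Theorem~\ref{Theorem P2} exactly as the paper does, and the converse uses the same key observation that a $\ast$-GKD is a P$\ast$MD, so every finite-type $\ast$-ideal containing a factor is $\ast$-invertible, upgrading $\ast$-homog to $\ast$-super homog. If anything, your converse is slightly more careful than the paper's, since by invoking Theorem~\ref{Theorem P2} you explicitly secure the type-$1$ property of the factors, which the paper leaves implicit.
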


\begin{proof}
By Proposition \ref{Proposition Q3}, $D$ is a $\ast $-super SH domain ($\ast 
$-IRKT) and by Theorem \ref{Theorem P2} $D$ is $\ast $-WKD. Thus $D$ is a $%
\ast $-GKD. The converse can be proved in the same manner as the converse of
Proposition \ref{Proposition Q3} was. That is by assuming that $D$ is a $%
\ast $-GKD, then using Theorem \ref{Theorem N} we can establish that for
every nonzero non unit $x$ of $D$, $xD$ expressible as a $\ast $-product of
finitely many $\ast $-homog ideals of the form $xD_{P}\cap D.$ But as $D_{P}$
is a rank one valuation domain for each maximal $\ast $-ideal $P$, every
finite type $\ast $-ideal $I$ containing $xD_{P}\cap D$ would have to be $%
\ast $-invertible because a $\ast $-GKD is a P$\ast $MD, so $xD_{P}\cap D$
is a $\ast $-super homog ideal.
\end{proof}

\begin{proposition}
\bigskip \label{Proposition Q6} For each pair $a,b$ of nonzero non units in
a $\ast $-IRKT (i.e. a $\ast $-super SH domain) $D,$ $(a,b)^{\ast }=D$ or $%
(a,b)^{\ast }=$ a $\ast $-product of $\ast $-super homog ideals $I$, each
containing both $a$ and $b$ such that $(a,b)D_{M(I)}=aD_{M(I)}$ or $%
(a,b)D_{M(I)}=bD_{M(I)}.$
\end{proposition}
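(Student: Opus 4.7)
The plan is to first dispose of the trivial case $(a,b)^{\ast}=D$, then identify the finite set of maximal $\ast$-ideals that ``see'' the pair $(a,b)$, localize there to extract the candidate $\ast$-super homog factors, and finally verify that their $\ast$-product recovers $(a,b)^{\ast}$.

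First, assume $(a,b)^{\ast}\neq D$. Since $D$ is a $\ast$-IRKT, it has finite $\ast$-character (Theorem \ref{Theorem M}), so the set $T=\{M_{1},\dots,M_{n}\}$ of maximal $\ast$-ideals containing both $a$ and $b$ (equivalently, containing $(a,b)$) is finite and nonempty. Because $D$ is a P$\ast$MD, each localization $D_{M_{i}}$ is a valuation domain, hence $(a,b)D_{M_{i}}=aD_{M_{i}}$ or $(a,b)D_{M_{i}}=bD_{M_{i}}$.

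Next, set $I_{i}:=(a,b)D_{M_{i}}\cap D$. Then $I_{i}=aD_{M_{i}}\cap D$ or $I_{i}=bD_{M_{i}}\cap D$, so by Corollary \ref{Corollary L} (applied to the relevant generator) $I_{i}$ is a $\ast$-invertible $\ast$-homog ideal with $M(I_{i})=M_{i}$, and it obviously contains both $a$ and $b$. Because $D$ is a P$\ast$MD, every $\ast$-ideal of finite type containing $I_{i}$ is $\ast$-invertible, so $I_{i}$ is in fact $\ast$-super homog (Remark \ref{Z}). Distinct $I_{i}$ are spawned by distinct maximal $\ast$-ideals, hence are mutually $\ast$-comaximal.

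It remains to show $(a,b)^{\ast}=(I_{1}I_{2}\cdots I_{n})^{\ast}$. Since $a,b\in I_{i}$ for every $i$, one has $(a,b)\subseteq \bigcap_{i}I_{i}$. For the reverse, work at the level of $\ast_{w}$-closures: for any maximal $\ast$-ideal $P\notin T$, both $(a,b)D_{P}=D_{P}$ and $I_{j}D_{P}=D_{P}$ for every $j$ (since $I_{j}$ is spawned by $M_{j}\neq P$), while at $P=M_{i}$ the other factors localize to $D_{M_{i}}$ and so $(I_{1}\cdots I_{n})D_{M_{i}}=I_{i}D_{M_{i}}=(a,b)D_{M_{i}}$. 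Consequently $(a,b)^{\ast_{w}}=\bigcap_{i}I_{i}=(I_{1}\cdots I_{n})^{\ast_{w}}$, the second equality using Proposition \ref{Proposition G}. Because each $I_{i}$ is $\ast$-invertible, so is $(I_{1}\cdots I_{n})^{\ast}$, and any $\ast$-invertible $\ast$-ideal equals its $\ast_{w}$-closure; hence $(I_{1}\cdots I_{n})^{\ast}=(a,b)^{\ast_{w}}\subseteq(a,b)^{\ast}$, while the inclusion $(a,b)\subseteq\bigcap_{i}I_{i}=(I_{1}\cdots I_{n})^{\ast}$ yields the other direction.

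The main obstacle is the last step: bridging the a priori gap between $(a,b)^{\ast_{w}}$ and $(a,b)^{\ast}$, since $(a,b)$ itself need not equal its $\ast_{w}$-closure. This is resolved by the P$\ast$MD hypothesis, which promotes the $I_{i}$ (and hence their mutually $\ast$-comaximal product) to $\ast$-invertible ideals for which $\ast$ and $\ast_{w}$ coincide, thereby forcing the equality of $\ast$-closures from equality of $\ast_{w}$-closures.
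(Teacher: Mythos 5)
Your proof is correct in substance, but it follows a genuinely different route from the paper's proof of Proposition \ref{Proposition Q6} --- and, interestingly, it essentially reproduces the argument the paper itself gives immediately afterwards in Corollary \ref{Corollary Q7} (parts (3) and (4)) as an ``alternate'' treatment. The paper's own proof of the proposition is an ab initio ideal-factorization argument: it decomposes $(a)$ and $(b)$ separately into mutually $\ast$-comaximal $\ast$-super homog factors, pairs off the similar ones, uses the comparability of similar $\ast$-super homog ideals (Theorem \ref{Theorem Q1}(2)) to set $K_i=(I_i,J_i)^{\ast}$ equal to the larger of the two, and then verifies $(a,b)^{\ast}=(K_1\cdots K_r)^{\ast}$ by factoring out the $\ast$-invertible $K_i$ and checking that the residual ideals are $\ast$-comaximal; this buys the explicit ``GCD at work'' picture, exhibiting the factors of $(a,b)^{\ast}$ as the larger of the corresponding factors of $a$ and $b$. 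Your approach instead localizes at the finitely many maximal $\ast$-ideals containing both $a$ and $b$, uses the P$\ast$MD property to see that $(a,b)D_{M_i}$ is $aD_{M_i}$ or $bD_{M_i}$, and assembles $(a,b)^{\ast}$ as a $\ast_w$-intersection; this is shorter and cleaner, at the cost of leaning on the valuation-domain localizations rather than on Theorem \ref{Theorem Q1}(2). One small repair in your last step: the assertion that ``any $\ast$-invertible $\ast$-ideal equals its $\ast_w$-closure'' is trivially true of any $\ast$-ideal and does not by itself bridge $(I_1\cdots I_n)^{\ast}$ and $(I_1\cdots I_n)^{\ast_w}$; what you actually need is that $(A^{\ast_w})^{\ast}=A^{\ast}$ because $\ast_w\leq\ast$, so applying $\ast$ to both sides of $(a,b)^{\ast_w}=(I_1\cdots I_n)^{\ast_w}$ gives $(a,b)^{\ast}=(I_1\cdots I_n)^{\ast}$ at once (this is exactly the step used in the proof of Corollary \ref{Corollary Q7}(4)). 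With that adjustment your argument is complete.
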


\begin{proof}
Because $a,b$ are nonzero non units of a $\ast $-IRKT $D,$ we can write $%
(a)=(I_{1}I_{2}....I_{l})^{\ast },$ where $I_{i}$ are mutually $\ast $%
-comaximal $\ast $-super homog ideals and similarly we can write $%
(b)=(J_{1}J_{2}....J_{m})^{\ast },$ where $m\geq n$ and $J_{j}$ are mutually 
$\ast $-comaximal $\ast $-super homog ideals. If for some $i,$ $I_{i}$ is
similar to some $J_{j},$ then $(I_{i},J_{j})$ is a unique pair in that $%
I_{i} $ is $\ast $-comaximal with each of the other $\ast $-super homog
ideals appearing in the expression for $(a)$ above and similarly for $J_{j}.$
We conclude that $(I_{i},b)^{\ast }\neq D$ and similarly $(J_{j},a)^{\ast
}\neq D$ and that there are exactly the same number of $I_{i}$s that have $%
(I_{i},b)^{\ast }\neq D$ as $J_{j}$s that have $(J_{j},a)^{\ast }\neq D.$
Suppose that, by a relabeling if necessary, $I_{1},I_{2},...,I_{r}$ are all
the $I_{i}$ such that $(I_{i},b)^{\ast }\neq D,$ and $J_{1},J_{2},...,J_{r}$
are all the $J_{j}$ such that $(J_{j},a)^{\ast }\neq D$ Thus $%
(a)=(I_{1}I_{2}...I_{r}I_{r+1}...I_{l})^{\ast }$ such that each of the $%
I_{1},...,I_{r}$ is similar to some, and hence exactly one, of the $J_{j}.$
Similarly we can write, relabeling if necessary, $%
(b)=(J_{1}J_{2}...J_{r}...J_{m})^{\ast }$ such that each of the $J_{i}$ is
similar to $I_{i}$ for $i=1,2,...,r.$ Now as $I_{i},J_{i}$ are similar, i.e. 
$M(I_{1})=M(J_{1}),$ we conclude that $(I_{i},J_{i})^{\ast }=K_{i}^{\ast },$
for $i=1,...,r,$ where $K_{i}^{\ast }=J_{i}$ if $I_{i}\subseteq J_{i}$ and $%
K_{i}^{\ast }=I_{i}$ if $J_{i}\subseteq I_{i}.$ Obviously, in either case, $%
(a,b)^{\ast }\subseteq K_{i}^{\ast }$ and as $K_{i}$ are mutually $\ast $%
-comaximal $(a,b)^{\ast }\subseteq (K_{1}K_{2}...K_{r})^{\ast }=H.$ As $%
K_{i}^{\ast }$ are the only $\ast $-super homog ideals containing both $a$
and $b$ we conclude that $(a,b)^{\ast }=(K_{1}K_{2}...K_{r})^{\ast }.$ This
can be seen as follows: Since $(a,b)^{\ast }$ $=$ $%
((K_{1}...K_{r})((K_{1}^{-1}...K_{r}^{-1})a,(K_{1}^{-1}...K_{r}^{-1})b))^{%
\ast }.$ This is because $K_{i}$ are $\ast $-invertible and $(a,b)^{\ast
}\subseteq (K_{1}K_{2}...K_{r})^{\ast }$. So, $(a,b)^{\ast }=$ $%
((K_{1}...K_{r})^{\ast }$ $(K_{1}^{-1}...K_{r}^{-1}I_{1}I_{2}$ $~...$ $%
I_{r}I_{r+1}...I_{l})^{\ast },$ $(K_{1}^{-1}...K_{r}^{-1}b))^{\ast }$ $=$ $%
((K_{1}...K_{r})^{\ast }$ $((K_{1}^{-1}I_{1})^{\ast
}...(K_{r}^{-1}I_{r})^{\ast }I_{r+1}$ $...$ $I_{l},$ $(K_{1}^{-1}J_{1})^{%
\ast }...$ $(K_{r}^{-1}J_{r})^{\ast }J_{r+1}...J_{m})^{\ast })^{\ast }.$ Now
note that $(K_{i}^{-1}I_{i})^{\ast }=D$ or $(K_{i}^{-1}I_{i})^{\ast }$ is a $%
\ast $-super homog ideal similar to $I_{i}$. Similarly $(K_{i}^{-1}J_{i})^{%
\ast }=D$ or $(K_{i}^{-1}J_{i})^{\ast }$ is a $\ast $-super homog ideal
similar to $J_{i}$. Moreover, in both cases, $((K_{i}^{-1}I_{i})^{\ast
},(K_{i}^{-1}J_{i})^{\ast })^{\ast }=D$ and, for $i\neq j,$ $%
((K_{i}^{-1}I_{i})^{\ast },(K_{j}^{-1}J_{j})^{\ast })^{\ast }$ $=D,$ $%
i,j=1,...,r.$ Moreover $((K_{i}^{-1}I_{i})^{\ast },J_{t})^{\ast }=D,$ for $%
i=1,...,r,t=r+1,...,m$ anyway and already for each $%
s=r+1,...,l_{t}=r+1,...,m $ $(I_{s},J_{t})^{\ast }=D.$ So, each of the
factors in $(K_{1}^{-1}I_{1})^{\ast }...$ $(K_{r}^{-1}I_{r})^{\ast
}I_{r+1}...I_{l}$ $=(K_{1}^{-1}...K_{r}^{-1})a$ is $\ast $-comaximal with
each of the factors in the $\ast $-product of $(K_{1}^{-1}J_{1})^{\ast }.$ $%
..(K_{r}^{-1}J_{r})^{\ast }J_{r+1}...J_{m}=$ $(K_{1}^{-1}...K_{r}^{-1})b.$
Thus $((K_{1}^{-1}...K_{r}^{-1})a,$ $(K_{1}^{-1}...K_{r}^{-1})b)^{\ast }=D$
and that gives $(a,b)^{\ast }=$ $((K_{1}...K_{r})((K_{1}^{-1}...K_{r}^{-1})a$
$(K_{1}^{-1}...K_{r}^{-1})b))^{\ast }=(K_{1}...K_{r})^{\ast }.$
\end{proof}

The above, ab-initio, proof was to stress the idea that there is a kind of
GCD at work. Below we provide an alternate statement that seems to get
similar results in a different way.

\begin{corollary}
\label{Corollary Q7} Given two nonzero elements $a,b$ in a $\ast$-super SH
domain $D,$ the following hold: (1) if there is no maximal $\ast$-ideal $P$
of $D$ that contains both $a$ and $b$, then $(a,b)^{\ast}=D,$ (2) if either
of $a,b$ is a unit, then $(a,b)^{\ast}=D,$ (3) if $P$ is a maximal $\ast$%
-ideal containing both $a,b$ then $(a,b)^{\ast}D_{P}\cap D=aD_{P}\cap D$ if $%
a|b$ in $D_{P}$ and $(a,b)^{\ast}D_{P}\cap D=bD_{P}\cap D$ if $b|a$ in $%
D_{P},$ (4) if $P_{1},P_{2},...,P_{n}$ are all the maximal $\ast$-ideals of $%
D$ that contain both $a$ and $b$ and if $I_{i}=(a,b)^{\ast}D_{P_{i}}\cap D$
for $i=1,...,n$, then $(a,b)^{\ast}=(I_{1}I_{2}...I_{n})^{\ast},$ (5) for
every pair $a,b\in D\backslash\{0\},(a,b)$ is $\ast$-invertible and so, $D$
is a P$\ast$MD and (6) a $d$-super SH domain is a Prufer domain.
\end{corollary}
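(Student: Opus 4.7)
The plan is to package (1)--(6) as simple deductions from Proposition~\ref{Proposition Q6} together with the structural results already proved (Corollary~\ref{Corollary H}, Proposition~\ref{Proposition K}, Proposition~\ref{Proposition Q3}, and the HMM characterization of P$\ast$MDs via valuation localizations).

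First I would dispose of (1) and (2). For (1), note that by finite character any proper $\ast$-ideal lies in some maximal $\ast$-ideal; if no maximal $\ast$-ideal contains both $a$ and $b$, none contains $(a,b)$, hence $(a,b)^{\ast}=D$. Part (2) is immediate since a unit among $a,b$ forces $(a,b)=D$.

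Next I would combine Proposition~\ref{Proposition Q6} with Corollary~\ref{Corollary H} to get (3) and (4) simultaneously. By Proposition~\ref{Proposition Q6}, write
\[
(a,b)^{\ast}=(K_{1}K_{2}\cdots K_{r})^{\ast},
\]
where the $K_{i}$ are mutually $\ast$-comaximal $\ast$-super homog ideals, each containing $a$ and $b$, with $M(K_{i})$ ranging over exactly those maximal $\ast$-ideals containing both $a$ and $b$ (this is what the proof of Proposition~\ref{Proposition Q6} gives). Hence after relabeling $r=n$ and $M(K_{i})=P_{i}$. Since the $K_{i}$ are mutually $\ast$-comaximal, Corollary~\ref{Corollary H} (applied to the expression $(a,b)^{\ast}=(K_{1}\cdots K_{n})^{\ast}$) yields
\[
(a,b)^{\ast}D_{P_{i}}\cap D=K_{i},
\]
which, together with the fact that $K_{i}$ was constructed in Proposition~\ref{Proposition Q6} as $K_{i}=aD_{P_{i}}\cap D$ when $a\mid b$ in $D_{P_{i}}$ (and $=bD_{P_{i}}\cap D$ when $b\mid a$ in $D_{P_{i}}$), simultaneously proves (3) and (4); indeed $I_{i}=(a,b)^{\ast}D_{P_{i}}\cap D=K_{i}$, so $(a,b)^{\ast}=(I_{1}I_{2}\cdots I_{n})^{\ast}$.

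For (5), each $K_{i}=I_{i}$ is $\ast$-super homog and therefore $\ast$-invertible by Definition~\ref{Definition Q}, so $(a,b)^{\ast}=(I_{1}\cdots I_{n})^{\ast}$ is a $\ast$-product of $\ast$-invertible $\ast$-ideals, hence $\ast$-invertible. The \emph{P$\ast$MD} conclusion actually comes for free from the \cite{HMM} characterization: $D$ is a $\ast$-IRKT by Proposition~\ref{Proposition Q3}, so $D_{P}$ is a valuation domain for every maximal $\ast$-ideal $P$, which is exactly the P$\ast$MD condition. Finally, (6) is the specialization $\ast=d$: a $d$-super SH domain is a $d$-IRKT (Proposition~\ref{Proposition Q3}), whose localizations at maximal ideals are valuation domains, i.e.\ a Pr\"ufer domain.

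The main obstacle, such as it is, is bookkeeping: matching the factors $K_{i}$ produced by Proposition~\ref{Proposition Q6} with the ideals $I_{i}=(a,b)^{\ast}D_{P_{i}}\cap D$ of the statement, and checking that the maximal $\ast$-ideals $M(K_{i})$ are precisely the $P_{1},\dots,P_{n}$ containing both $a$ and $b$. Once that identification is made, every other assertion is an immediate application of a result already in hand.
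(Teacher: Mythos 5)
Your argument is correct, but for parts (3) and (4) it follows a genuinely different route from the paper's. The paper proves (3) and (4) by a direct localization computation: it writes $(a,b)^{\ast _{w}}=\bigcap_{P}(a,b)D_{P}$ over the maximal $\ast $-ideals, notes that $(a,b)D_{P}=D_{P}$ unless $P$ contains both $a$ and $b$, uses the fact that each $D_{P_{i}}$ is a valuation domain to get $(a,b)D_{P_{i}}=aD_{P_{i}}$ or $bD_{P_{i}}$, and then converts the finite intersection $\bigcap_{i}I_{i}$ of the mutually $\ast $-comaximal ideals $I_{i}$ into the product $(I_{1}\cdots I_{n})^{\ast _{w}}$ before applying $\ast $. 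This is deliberately independent of Proposition \ref{Proposition Q6}: the paper bills Corollary \ref{Corollary Q7} as ``an alternate statement that seems to get similar results in a different way.'' You instead import the comaximal decomposition $(a,b)^{\ast }=(K_{1}\cdots K_{n})^{\ast }$ from Proposition \ref{Proposition Q6} and identify each $K_{i}$ with $(a,b)^{\ast }D_{P_{i}}\cap D$ via Corollary \ref{Corollary H}(ii); the bookkeeping you flag does check out, since by Corollary \ref{Corollary L} the factors of $(a)$ and $(b)$ at $P_{i}$ are $aD_{P_{i}}\cap D$ and $bD_{P_{i}}\cap D$, and the $M(K_{i})$ are exactly the maximal $\ast $-ideals containing both elements. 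What the paper's route buys is self-containedness: it needs only the valuation property of $D_{P}$ and the $\ast _{w}$-formula, and avoids leaning on the long ab-initio proof of Proposition \ref{Proposition Q6}, at the small cost of the unremarked identification $(a,b)^{\ast }=(a,b)^{\ast _{w}}$. What your route buys is that (3) and (4) become pure corollaries of results already in hand, with no new localization argument. Parts (1), (2), (5) and (6) are handled essentially as in the paper (the paper, too, ultimately gets P$\ast $MD and Prufer from the valuation characterization of \cite{HMM}).
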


\begin{proof}
(1) and (2) are straight forward. For (3) let $P$ be a maximal $\ast $-ideal
containing both $a,b.$ Then as $(a,b)^{\ast }=(a,b)^{\ast _{w}}$ we have $%
(a,b)^{\ast }D_{P}\cap D=(a,b)^{\ast _{w}}D_{P}\cap D=(a,b)D_{P}\cap D.$ Now
as $D_{P}$ is a valuation domain $a|b$ or $b|a$ in $D_{P}$ and so $%
(a,b)D_{P}=aD_{P}$ if $a|b$ and $(a,b)D_{P}=bD_{P}$ if $b|a$ in $D_{P}.$ For
(4) note that for any maximal $\ast $-ideal $P$ such that $P$ does not
contain at least one of $a$ or $b,$ $(a,b)D_{P}=D_{P}.$ Now $(a,b)^{\ast
_{w}}=\dbigcap\limits_{P\in t\text{-}Max(D)}(a,b)D_{P}=(\dbigcap%
\limits_{i=1}^{i=n}(a,b)D_{P_{i}})\cap D$ $=\dbigcap%
\limits_{i=1}^{i=n}(a,b)D_{P_{i}}\cap D)=$ $\dbigcap\limits_{i=1}^{i=n}I_{i}$
$=(I_{1}I_{2}...I_{n})^{\ast _{w}},$ $I_{i}$ being $\ast $-super homog. So $%
(a,b)^{\ast _{w}}=(I_{1}I_{2}...I_{n})^{\ast _{w}}$ and applying $\ast $ to
both sides we get $(a,b)^{\ast }=(I_{1}I_{2}...I_{n})^{\ast }.$ Finally, for
(5) and (6), note that as $(a,b)D_{P}$ is principal for each maximal $\ast $%
-ideal $P,$ because $D_{P}$ is a valuation domain, we conclude that in f $D$
is a $\ast $-super SH domain and if, for $a,b\in D$ with $\left( a,b\right)
\neq (0)$ then $(a,b)$ is $\ast _{w}$-invertible and hence $\ast $%
-invertible. This makes the $\ast $-super SHD $D$ a P$\ast $MD and $d$-super
SHD a P$d$MD which is Prufer.
\end{proof}

Part (5) of Corollary \ref{Corollary Q7} is sort of tongue in the cheek in
that for every maximal $\ast $-ideal $M$ of a $\ast $-IRKT $D$ we have that $%
D_{M}$ is a valuation domain, a necessary and sufficient condition for $D$
to be a P$\ast $MD.

An integral domain $D$ is called an almost GCD (AGCD)\ domain if for every
pair $a,b\in D\backslash\{0\}$ there is a positive integer $n$ such that $%
(a^{n})\cap(b^{n})$ is a principal ideal. Equivalently, $D$ is an AGCD
domain if (and only if) for every pair $a,b\in D\backslash\{0\}$ there is a
positive integer $n$ such that $(a^{n},b^{n})_{v}$ is a principal ideal. Now
we can write $(a^{n},b^{n})_{v}$ as $(a^{n},b^{n})_{t}$ because the number
of generators is finite. AGCD domains have been studied in \cite{Z} and in 
\cite{AZ1} as a generalization of GCD domains. Here $D$ is a GCD domain if
every pair $a,b$ of nonzero elements of $D$ has a greatest common divisor
GCD. It is well known that $D$ is a GCD domain if and only if for every pair
of nonzero elements $a,b$ the ideal $aD\cap bD$ is principal (i.e. if and
only if $(a,b)_{v}$ is principal).

Since a $\ast$-IRKT is a P$\ast$MD, and hence integrally closed $%
(a^{n},b^{n})_{t}=((a,b)^{n})_{t}.$ Also since a P$\ast$MD\ is a P$t$MD we
have $((a,b)^{n})^{\ast}=((a,b)^{n})_{t}=(a^{n},b^{n})_{t}.$

\begin{proposition}
\label{Proposition Q8} A $\ast $-IRKT $D$ is an AGCD domain if and only if
for every $\ast $-super homog ideal $A$ of $D$ we have $(A^{n})^{\ast }$
principal for some positive integer $n.$
\end{proposition}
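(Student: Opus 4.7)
The plan is to exploit the identity $((a,b)^n)^\ast = (a^n,b^n)_t$ recorded in the paragraph preceding the statement, combined with the decomposition of $(a,b)^\ast$ into $\ast$-super homog factors provided by Corollary \ref{Corollary Q7}(4), in order to shuttle between the AGCD condition on pairs of elements and the ``some power becomes $\ast$-principal'' condition on $\ast$-super homog ideals.

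For the $(\Leftarrow)$ direction, given nonzero $a,b\in D$ with $(a,b)^\ast \neq D$, I would first apply Corollary \ref{Corollary Q7}(4) to write $(a,b)^\ast = (I_1 I_2 \cdots I_m)^\ast$ as a $\ast$-product of pairwise $\ast$-comaximal $\ast$-super homog ideals, then invoke the hypothesis to obtain integers $n_i$ with $(I_i^{n_i})^\ast$ principal. Choosing $N$ as a common multiple of the $n_i$ and writing $(I_i^N)^\ast = c_i D$, the computation
\[
(a^N,b^N)_t \;=\; ((a,b)^N)^\ast \;=\; \bigl((I_1^N)^\ast (I_2^N)^\ast \cdots (I_m^N)^\ast\bigr)^\ast \;=\; c_1 c_2 \cdots c_m D
\]
shows that $(a^N,b^N)_t$ is principal, verifying AGCD.

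For the $(\Rightarrow)$ direction, let $A$ be $\ast$-super homog and write $A = (a_1,\dots,a_k)^\ast$ with $a_i \in A$. Two ingredients are needed. First, the identity $(A^n)^\ast = (a_1^n,\dots,a_k^n)^\ast$ for every $n$, which I would verify by localizing at each maximal $\ast$-ideal $P$: since $D_P$ is a valuation domain (P$\ast$MD), $(a_1,\dots,a_k)D_P = a_j D_P$ for some $j$, so both $(a_1,\dots,a_k)^n$ and $(a_1^n,\dots,a_k^n)$ extend to $a_j^n D_P$; because in a P$\ast$MD the $\ast$-closure of a finite type ideal coincides with the intersection of its localizations at maximal $\ast$-ideals, the local equalities glue to the desired $\ast$-equality. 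Second, that AGCD lifts from pairs to $k$-tuples, which I would prove by induction on $k$: given $(a_1^m,\dots,a_{k-1}^m)_t = cD$ by induction, the same localization argument yields $(a_1^{mr},\dots,a_{k-1}^{mr})_t = c^r D$ for every $r$; applying AGCD to the pair $c, a_k^m$ supplies an $r$ with $(c^r, a_k^{mr})_t$ principal, and hence $(a_1^{mr},\dots,a_k^{mr})_t = (c^r, a_k^{mr})_t$ is principal. Combining, $(A^n)^\ast = (a_1^n,\dots,a_k^n)^\ast$ is principal for a suitable $n$.

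The main obstacle is the $k$-generator identity $(A^n)^\ast = (a_1^n,\dots,a_k^n)^\ast$: the inclusion $\supseteq$ is immediate but upgrading it to a $\ast$-equality genuinely depends on the P$\ast$MD structure, namely that every localization $D_P$ at a maximal $\ast$-ideal is a valuation domain and that $\ast$-closure agrees with $\ast_w$-closure on finite type ideals. Once that identity (and its inductive exploitation of the two-element AGCD condition) is in place, the rest of the argument is routine bookkeeping with $\ast$-comaximal factorizations.
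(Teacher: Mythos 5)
Your proposal is correct, and the left-to-right half (AGCD from the hypothesis on $\ast$-super homog ideals) follows essentially the same route as the paper: decompose $(a,b)^{\ast}$ into mutually $\ast$-comaximal $\ast$-super homog factors, pass to a common multiple of the exponents, and use integral closedness of a P$\ast$MD to identify $((a,b)^{N})^{\ast}$ with $(a^{N},b^{N})_{t}$; the paper uses Proposition \ref{Proposition Q6} where you use Corollary \ref{Corollary Q7}(4), but these supply the same decomposition. Where you genuinely diverge is the converse. The paper disposes of it in one line by citing Theorem 3.9 of \cite{Z}, whereas you give a self-contained argument: the identity $(A^{n})^{\ast}=(a_{1}^{n},\dots,a_{k}^{n})^{\ast}$, proved by localizing at maximal $\ast$-ideals (where $D_{P}$ is a valuation domain, so a finitely generated ideal extends to a principal one generated by a single $a_{j}$) and gluing via $\ast_{w}=\bigcap_{P}(\cdot)D_{P}$, followed by an induction lifting the two-element AGCD condition to $k$-tuples. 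Both steps check out: in a P$\ast$MD every finite type ideal is $\ast$-invertible, so its $\ast$-closure agrees with its $\ast_{w}$-closure and the local equalities do glue, and the inductive step correctly replaces $(a_{1}^{m},\dots,a_{k-1}^{m})_{t}$ by a principal generator $c$ before invoking AGCD on the pair $c,a_{k}^{m}$. What your route buys is independence from the external reference (and in fact it proves the stronger statement that \emph{every} finite type $\ast$-ideal of an AGCD $\ast$-IRKT has a principal $\ast$-power, not just the $\ast$-super homog ones, which is essentially the content of the cited theorem); what the paper's route buys is brevity. One small housekeeping point: you should note explicitly that the pairs $(a,b)$ with $(a,b)^{\ast}=D$ are trivially fine for the AGCD condition, which you do implicitly by restricting to $(a,b)^{\ast}\neq D$.
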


\begin{proof}
Let $D$ be a $\ast $-IRKT. Suppose that for every $\ast $-super homog ideal $%
A$ we have $(A^{n})^{\ast }$ principal for some $n.$ Let $a,b$ be two
nonzero non units of $D.$ By Proposition \ref{Proposition Q6} we have $%
(a,b)^{\ast }=(J_{1}J_{2}...J_{r})^{\ast }$ where $J_{i}$ are mutually $\ast 
$-comaximal $\ast $-super homog ideals, each dividing out $a$ or $b$. Now
let $n_{i}$ be the positive integers such that $(J_{i})^{n_{i}}=(d_{i}).$
Let $m=LCM(\{n_{i}\}).$ Then $((a,b)^{m})^{\ast
}=(J_{1}^{m}J_{2}^{m}...J_{r}^{m})^{\ast },$ as $(J_{i}^{m})^{\ast
}=((J_{i}^{n_{i}})^{m/n_{i}})^{\ast }=$ $((d_{i})^{m/n_{i}})=(D_{i}),$ say.
But then $((a,b)^{m})^{\ast }=(D_{1}...D_{r})$ a principal ideal. Applying
the $v$-operation to both sides we have $((a,b)^{m})_{v}=(D_{1}...D_{r})$ as
a $\ast $-IRKT is integrally closed, we have $%
((a,b)^{m})_{v}=(a^{m},b^{m})_{v}.$ Now $(a^{m},b^{m})_{v}$ being principal
leads to $a^{m}D\cap b^{m}D$ Indeed if one of $a$ or $b$ is a unit, or if $%
a,b$ are $\ast $-comaximal, $(a,b)^{\ast }$ is principal which leads to $%
aD\cap bD$ principal and so $D$ is an AGCD domain. Conversely, let the $\ast 
$-IRKT $D$ be an AGCD domain. Then, as every $\ast $-homog ideal $I$ is such
that $I^{\ast }$ is of finite type and as $D$ is a $\ast $-IRKT, we have $%
(I^{n})^{\ast }$ principal for some $n$ \cite[Theorem 3.9]{Z}.
\end{proof}

\begin{definition}
\label{Definition R1} A $\ast$-homog ideal $I$ will be called a $\ast$%
-almost factorial homog ($\ast$-af-homog) ideal if for each $\ast$- ideal $J$
of finite type containing $I$ we have $(J^{n})^{\ast}$ principal, for some
positive integer $n$. Also, a domain $D$ will be called $\ast$-af-SH domain
if for every nonzero non unit $x$ of $D,$ $xD$ is expressible as a $\ast$%
-product of finitely many $\ast$-af-homog ideals.
\end{definition}

Indeed a $\ast$-af-homog ideal $I$ is $\ast$-super homog, as $(J^{n})^{\ast}$
principal implies $J$ is $\ast$-invertible for each $\ast$- ideal $J$ of
finite type containing $I.$

\begin{proposition}
\label{Proposition R1a} The $\ast $-product of a finite number of $\ast $%
-af-homog ideals is uniquely expressible as a $\ast $-product of mutually $%
\ast $-comaximal $\ast $-af-homog ideals.
\end{proposition}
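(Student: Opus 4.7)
The plan is to mirror the strategy used in Proposition \ref{Proposition K} and Proposition \ref{Proposition Q1a}: reduce to showing that the $\ast$-product of similar $\ast$-af-homog ideals is again $\ast$-af-homog, then group similar factors into classes and invoke Proposition \ref{Proposition K} for uniqueness (since every $\ast$-af-homog ideal is $\ast$-homog).

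First I would establish the key closure lemma: if $I$ and $J$ are $\ast$-af-homog with $M(I)=M(J)$, then $(IJ)^{\ast}$ is $\ast$-af-homog. By Corollary \ref{Corollary F}, $(IJ)^{\ast}$ is $\ast$-homog and similar to both $I$ and $J$, so only the af-condition needs checking. Let $K$ be a $\ast$-ideal of finite type with $(IJ)^{\ast}\subseteq K\subsetneq D$. Because $J$ is $\ast$-af-homog it is in particular $\ast$-super homog, hence $\ast$-invertible, so $(KJ^{-1})^{\ast}$ is a $\ast$-ideal of finite type, and from $K\supseteq IJ$ we get $(KJ^{-1})^{\ast}\supseteq (IJJ^{-1})^{\ast}=I$. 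Applying the af-homog property of $I$ yields an integer $n$ with $(((KJ^{-1})^{\ast})^{n})^{\ast}=(a)$ principal, and applying the af-homog property of $J$ to the witness ideal $J$ itself yields $m$ with $(J^{m})^{\ast}=(b)$ principal. Choosing $N$ a common multiple of $n$ and $m$ makes both $((KJ^{-1})^{N})^{\ast}$ and $(J^{N})^{\ast}$ principal, and then
\[
(K^{N})^{\ast}=((KJ^{-1}\cdot J)^{N})^{\ast}=((KJ^{-1})^{N}(J^{N}))^{\ast}
\]
is the $\ast$-product of two principal ideals, hence principal. This gives closure.

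Next I would iterate this lemma exactly as in the proof of Proposition \ref{Proposition K}. Given $A=(I_{1}I_{2}\cdots I_{m})^{\ast}$ with each $I_{i}$ being $\ast$-af-homog, partition the $I_{i}$ into similarity classes (using the equivalence relation of sharing a maximal $\ast$-ideal). The lemma (applied repeatedly) shows that the $\ast$-product $J_{k}$ over each similarity class is $\ast$-af-homog. By Remark \ref{Remark D}, factors from different similarity classes are $\ast$-comaximal, so $A=(J_{1}J_{2}\cdots J_{r})^{\ast}$ with the $J_{k}$ mutually $\ast$-comaximal $\ast$-af-homog ideals. For uniqueness, note that every $\ast$-af-homog ideal is in particular $\ast$-homog, so the uniqueness clause of Proposition \ref{Proposition K} applies verbatim and identifies the $J_{k}$ up to order.

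The main obstacle is the closure lemma: unlike the $\ast$-homog and $\ast$-super homog cases, the af-condition refers to a strong arithmetic statement (some power becoming principal) about every finite-type $\ast$-ideal sitting above $(IJ)^{\ast}$, not just about $(IJ)^{\ast}$ itself. The trick of dividing out by the $\ast$-invertible factor $J$ to push $K$ above $I$, handling the remainder via the af-property of $J$ applied to $J$ itself, and then recombining the principal powers, is the only subtle point; everything else follows routinely from the already-established theory.
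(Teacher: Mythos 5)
Your proposal is correct and follows essentially the same route as the paper's proof: both establish the closure lemma for two similar $\ast$-af-homog ideals by dividing the overlying finite-type $\ast$-ideal by the $\ast$-invertible factor $J$ (resp.\ $L$), applying the af-condition of the other factor to the quotient and of $J$ to itself, and recombining the principal powers (the paper uses the product $mn$ of the two exponents where you use a common multiple, an immaterial difference), then both group factors into similarity classes and quote the uniqueness argument of Proposition \ref{Proposition K}. No gaps.
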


\begin{proof}
We first show that the $\ast $-product of two similar $\ast $-af-homog
ideals $K,L$ is a $\ast $-af-homog ideal similar to $I,J$. For this let $J$
be a $\ast $-ideal of finite type containing $KL$, i.e. $J\supseteq KL.$
Then as $K,L$ are both $\ast $-af-homog and hence $\ast $-super homog we
have $(JL^{-1})^{\ast }$ a $\ast $-ideal of finite type containing $K.$ But
then, by definition, there a positive integer $m$ such that $%
((JL^{-1})^{m})^{\ast }=dD,$ or $(J^{m})^{\ast }=(L^{m})^{\ast }dD.$ Now as,
for some positive integer $n,$ we have $(L^{n})^{\ast }$ is principal we
conclude that $(J^{mn})^{\ast }$ is principal. Now $(KL)^{\ast }$ is $\ast $%
-homog similar to $K$ and $L$ because $K$ and $L$ are similar and because
for each $\ast $-ideal $J$ of finite type containing $(KL)^{\ast }$ there is
a positive integer $r$ such that $(J^{r})^{\ast }$ is principal we conclude
that $(KL)^{\ast }$ is indeed a $\ast $-af-homog ideal, similar to $K$ and $%
L.$ That a $\ast $-product of finitely many $\ast $-af-homog ideals similar
to each other is a $\ast $-af-homog ideal similar to them can be shown by
doing it taken two at a time. Next $\ast $-af-homog ideals being $\ast $%
-homog we can express the $\ast $-product uniquely as a $\ast $-product of
mutually $\ast $-comaximal $\ast $-homog ideals obtained by taking $\ast $%
-products of similar $\ast $-homog ideals. Now in this case the $\ast $%
-products of those mutually similar $\ast $-homog ideals are $\ast $%
-af-homog ideals, as we found in the proof of Proposition \ref{Proposition
Q3}, by noting that if $xD$ is a $\ast $-product of $\ast $-af-homog ideals
then $xD\cap D$ is one of those $\ast $-af-homog ideals.
\end{proof}

\begin{theorem}
\label{Theorem R2} The following are equivalent for an integral domain $D:$
(1) for every nonzero non unit $x$ of the domain $D,$ $xD$ is expressible as
a $\ast$-product of finitely many $\ast$-af-homog ideals (2) $D$ is an AGCD $%
\ast$-IRKT.
\end{theorem}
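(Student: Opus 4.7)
The plan is to reduce both implications to already established machinery: Proposition \ref{Proposition Q3} (the $\ast$-super SH $\Leftrightarrow$ $\ast$-IRKT equivalence) together with Proposition \ref{Proposition Q8} (which characterises when a $\ast$-IRKT is AGCD in terms of powers of $\ast$-super homog ideals becoming principal).

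For $(1)\Rightarrow (2)$: First, recall that every $\ast$-af-homog ideal is $\ast$-super homog (as remarked just after Definition \ref{Definition R1}). Hence a $\ast$-af-SH domain is $\ast$-super SH, and by Proposition \ref{Proposition Q3} it is a $\ast$-IRKT. To upgrade this to AGCD, I would verify the criterion of Proposition \ref{Proposition Q8}: for an arbitrary $\ast$-super homog ideal $A$, some power $(A^{n})^{\ast }$ is principal. Fix a nonzero $a\in A$ and factor $aD=(I_{1}\cdots I_{r})^{\ast }$ with the $I_{j}$ mutually $\ast$-comaximal $\ast$-af-homog ideals. Exactly one factor, say $I_{1}$, lies in $M(A)$, and by Corollary \ref{Corollary L} we have $I_{1}=aD_{M(A)}\cap D$. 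Using $a\in A$ and Proposition \ref{Proposition G} applied to $A$, one gets the key inclusion
\[
I_{1}=aD_{M(A)}\cap D\;\subseteq \;AD_{M(A)}\cap D\;=\;A.
\]
Since $A$ is a $\ast$-ideal of finite type containing the $\ast$-af-homog ideal $I_{1}$, Definition \ref{Definition R1} supplies an $n$ with $(A^{n})^{\ast }$ principal, and Proposition \ref{Proposition Q8} then delivers AGCD.

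For $(2)\Rightarrow (1)$: Since $D$ is a $\ast$-IRKT, Proposition \ref{Proposition Q3} gives $xD=(I_{1}\cdots I_{r})^{\ast }$ with each $I_{i}$ mutually $\ast$-comaximal and $\ast$-super homog. It remains to show each $I_{i}$ is in fact $\ast$-af-homog, i.e.\ that for every $\ast$-ideal $J$ of finite type with $J\supseteq I_{i}$ some power $(J^{n})^{\ast }$ is principal. The step to check is that such a $J$ is itself $\ast$-super homog: any $\ast$-ideal of finite type containing $J$ a fortiori contains $I_{i}$ and is therefore $\ast$-invertible, and $J\subseteq M(I_{i})$ forces the homogeneity condition on sums to be inherited from $I_{i}$. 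Once $J$ is known to be $\ast$-super homog, the AGCD hypothesis together with Proposition \ref{Proposition Q8} yields $(J^{n})^{\ast }$ principal, exactly what Definition \ref{Definition R1} demands.

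The only delicate point is the inclusion $I_{1}\subseteq A$ in the first direction, which is where Proposition \ref{Proposition G} is essential; everything else is a bookkeeping reduction to Propositions \ref{Proposition Q3} and \ref{Proposition Q8}. I do not expect any genuinely new computation.
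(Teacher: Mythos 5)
Your proposal is correct and follows the same route as the paper: both directions are reduced to Proposition \ref{Proposition Q3} and Proposition \ref{Proposition Q8}, with the converse resting on the fact that in an AGCD $\ast$-IRKT every finite-type $\ast$-ideal containing $xD_{P}\cap D$ has a principal $\ast$-power. In fact your verification of the Proposition \ref{Proposition Q8} criterion in $(1)\Rightarrow(2)$ --- locating a $\ast$-af-homog ideal $I_{1}=aD_{M(A)}\cap D\subseteq A$ inside an arbitrary $\ast$-super homog ideal $A$ via Corollary \ref{Corollary L} and Proposition \ref{Proposition G} --- makes explicit a step the paper's one-line argument leaves to the reader.
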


\begin{proof}
$D$ is a $\ast $-IRKT by Proposition \ref{Proposition Q3} and by Proposition %
\ref{Proposition Q8}, supported by the definition of $\ast $-af-ideals, $D$
is an AGCD $\ast $-IRKT. For the converse note that, as we have already
observed, every principal ideal $xD$ generated by a nonzero non unit $x$ can
be expressed as a $\ast $-product of $\ast $-homog ideals, each of which is, 
$\ast $-invertible and, expressible as $xD_{P}\cap D$ where $D_{P}$ is a
valuation domain. Now a $\ast $-ideal $J$ of finite type containing $%
xD_{P}\cap D$ is a $\ast $-ideal of finite type of an AGCD domain in which $%
\ast =t$ and so there must be a positive integer $n$ such that $%
(J^{n})^{\ast }$ is principal. Thus each of $xD_{P}\cap D$ is $\ast $%
-af-homog.
\end{proof}

We can define $\ast$-af-homog ideals of type $1$ and type $2$ and prove
obvious results about AGCD $\ast$-GKD and AGCD $\ast$-Krull.

Call a $\ast $-homog ideal $I$ a $\ast $-af-homog ideal of type $1,$ if $I$
is a $\ast $-af-homog ideal and a $\ast $-homog ideal of type $1.$ Now let $%
I,J$ be two $\ast $-af-homog ideals of type $1,$ Then $IJ$ is a $\ast $%
-af-homog ideal, by the proof of Proposition \ref{Proposition R1a} and of
type $1$ by the remark before Theorem \ref{Theorem P2}

As we have done in other cases let's call an integral domain $D$ a $\ast $%
-af-SH domain of type $1$ if for every nonzero non unit $x\in D,$ $xD$ is
expressible as a $\ast$-product of $\ast$-af-homog ideals of type $1.$

Indeed a $\ast $-af-homog ideal of type $1$ is a $\ast $-super homog ideal
of type $1$ and so a $\ast $-af-SH domain of type $1$ is at least a $\ast $%
-GKD. Next, an AGCD $\ast $-GKD is at least a $\ast $-GKD. So the proof of
the following statement will run along lines similar to that of Proposition %
\ref{Proposition Q5}.

\begin{proposition}
\label{Proposition R3} The following are equivalent for an integral domain $%
D:$ (1) $D$ is a $\ast$-af- domain of type $1$, i.e. for every nonzero non
unit $x$ of the domain $D,$ $xD$ is expressible as a $\ast$-product of
finitely many $\ast$-af-homog ideals of type $1,$ (2) $D$ is an AGCD $\ast$%
-GKD.
\end{proposition}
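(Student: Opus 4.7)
The plan is to prove the equivalence by chaining together the already-established results, in parallel to the proof of Proposition \ref{Proposition Q5}, and then to supply the single extra ingredient (the AGCD condition) that distinguishes the present case.

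For the direction (1) $\Rightarrow$ (2), the strategy is to observe two separate consequences of (1). First, every $\ast$-af-homog ideal of type $1$ is, by definition, a $\ast$-af-homog ideal, so (1) implies that $D$ is a $\ast$-af-SH domain; Theorem \ref{Theorem R2} then gives that $D$ is an AGCD $\ast$-IRKT. Second, by the remark preceding Definition \ref{Definition R1}, every $\ast$-af-homog ideal is $\ast$-super homog, and since it is also of type $1$ by hypothesis, it is a $\ast$-super homog ideal of type $1$. Hence $D$ is a $\ast$-super SH domain of type $1$, and Proposition \ref{Proposition Q5} forces $D$ to be a $\ast$-GKD. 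Combining these two conclusions yields that $D$ is an AGCD $\ast$-GKD, as required.

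For the direction (2) $\Rightarrow$ (1), I would run the converse of Proposition \ref{Proposition Q5} with the extra AGCD hypothesis built in. Since $D$ is a $\ast$-GKD, Proposition \ref{Proposition Q5} produces, for each nonzero non unit $x \in D$, a factorization $xD = (I_1 \cdots I_r)^{\ast}$ where the $I_i$ are mutually $\ast$-comaximal $\ast$-super homog ideals of type $1$, each of the form $xD_{P_i} \cap D$ with $P_i$ a maximal $\ast$-ideal containing $x$. What remains is to upgrade each $I_i$ from $\ast$-super homog of type $1$ to $\ast$-af-homog of type $1$: that is, to show that every $\ast$-ideal $J$ of finite type with $J \supseteq I_i$ satisfies $(J^n)^{\ast}$ principal for some $n$. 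Because a $\ast$-GKD is a P$\ast$MD, such $J$ is $\ast$-invertible and $\ast = t$, so $J = (a_1,\ldots,a_k)^{\ast}$ for finitely many $a_j$; then the AGCD hypothesis (extended from elements to finite-type $t$-ideals as in \cite[Theorem 3.9]{Z}, the same step used in the proof of Theorem \ref{Theorem R2}) delivers a positive integer $n$ with $(J^n)^{\ast}$ principal. Hence each $I_i$ is $\ast$-af-homog of type $1$, and $D$ is a $\ast$-af-SH domain of type $1$.

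The main obstacle, such as it is, lies in the second direction, specifically in justifying that the AGCD property at the level of pairs of elements propagates to finite-type $\ast$-ideals $J$ containing $xD_{P_i} \cap D$. This is not a routine unfolding of definitions but depends on the integral-closure consequences used in Proposition \ref{Proposition Q8} (namely $((a,b)^n)^{\ast} = (a^n,b^n)_t$ in a P$\ast$MD) together with the cited extension in \cite[Theorem 3.9]{Z}. Once this step is in hand, the rest of the argument is a direct assembly of Proposition \ref{Proposition Q5} and Theorem \ref{Theorem R2}, which is exactly the model the author indicates when saying the proof runs along lines similar to Proposition \ref{Proposition Q5}.
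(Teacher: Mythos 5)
Your proposal is correct and follows essentially the same route the paper intends: the paper gives no formal proof but states immediately before the proposition that a $\ast$-af-homog ideal of type $1$ is a $\ast$-super homog ideal of type $1$ (so Proposition \ref{Proposition Q5} yields the $\ast$-GKD part) and that the argument "runs along lines similar to" Proposition \ref{Proposition Q5}, with the AGCD ingredient supplied exactly as in Theorem \ref{Theorem R2} via \cite[Theorem 3.9]{Z}. Your write-up is in fact a more careful execution of that sketch, correctly isolating the one nontrivial step (passing from the element-level AGCD condition to $(J^{n})^{\ast}$ principal for finite-type $\ast$-ideals $J$ containing $xD_{P}\cap D$).
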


Next call a $\ast $-homog ideal $I,$ a $\ast $-af-homog ideal of type $2$ if 
$I$ is $\ast $-af-homog and every $\ast $-homog ideal $J$ containing $I$ is $%
\ast $-af-homog such that $J^{\ast }=(M(J)^{n})^{\ast }$ for some positive
integer $n.$ It is easy to see that a $\ast $-af-homog ideal of type $2$ is
a $\ast $-af-homog of type $1.$ Now we can, indeed, call $D$ a $\ast $-af-SH
domain of type $2$ if for every nonzero non unit $x\in D$ the ideal $xD$ is
expressible as a $\ast $-product of finitely many $\ast $-af-homog ideals of
type $2.$ Thus a $\ast $-af-SH domain of type $2$ is a $\ast $-af-SH domain
of type $1$ and so a $\ast $-GKD.

\begin{theorem}
\label{Theorem R4} Let $D$ be an integral domain and suppose that every
nonzero proper principal ideal of $D$ is expressible as a finite $\ast $%
-product of $\ast$-af-homog ideals of type $2$. Then $D$ is an AGCD $\ast $%
-Krull domain. Conversely if $D$ is an AGCD $\ast$-Krull domain, then every
nonzero proper principal ideal of $D$ is expressible as a $\ast$-product of
finitely many $\ast$-af-homog ideals of type $2$.
\end{theorem}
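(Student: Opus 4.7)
The plan is to reduce Theorem~\ref{Theorem R4} to the previously established Theorems \ref{Theorem P4} and \ref{Theorem R2} by unpacking the definition of a $\ast$-af-homog ideal of type $2$ into its two constituent requirements.

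For the forward direction, I would first observe that any $\ast$-af-homog ideal $I$ of type $2$ is simultaneously (i) a $\ast$-af-homog ideal, and (ii) a $\ast$-homog ideal of type $2$ in the sense of Definition \ref{Definition P3}; the latter follows by specializing the quantifier in Definition R4 to $J=I$, which forces $I^{\ast}=(M(I)^{n})^{\ast}$ for some $n$. Hence if every nonzero non-unit principal ideal of $D$ factors as a $\ast$-product of $\ast$-af-homog ideals of type $2$, then $D$ is at the same time a $\ast$-SH domain of type $2$ and a $\ast$-af-SH domain. Theorem \ref{Theorem P4} then shows $D$ is $\ast$-Krull, while Theorem \ref{Theorem R2} shows $D$ is an AGCD $\ast$-IRKT. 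Combining these yields an AGCD $\ast$-Krull domain.

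For the converse, suppose $D$ is an AGCD $\ast$-Krull domain. By Theorem \ref{Theorem P4} (converse), each $xD$ factors as $xD=(I_{1}\cdots I_{r})^{\ast}$ where each $I_{i}=(M_{i}^{n_{i}})^{\ast}$ for a maximal $\ast$-ideal $M_{i}$. I must promote each $I_{i}$ to a $\ast$-af-homog ideal of type $2$. Let $J$ be any $\ast$-homog ideal containing $I_{i}$. By Proposition \ref{Proposition E}, $M(J)=M(I_{i})=M_{i}$. The structural claim $J^{\ast}=(M_{i}^{k})^{\ast}$ for some $k$ then follows because $\ast$-Krull implies Krull (noted just before Definition \ref{Definition P3}), so $M_{i}$ has height one, $J$ is $\ast$-invertible in the P$\ast$MD $D$, and the classical Krull-domain structure writes $J$ as a finite $t$-product of height-one primes, every one of which must equal $M_{i}$ because $J$ lies in no other maximal $\ast$-ideal. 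Finally, the AGCD hypothesis together with the cited result \cite[Theorem 3.9]{Z} (used in the same way as in Proposition \ref{Proposition Q8}) gives $(J^{m})^{\ast}$ principal for some $m$, so $J$ is $\ast$-af-homog. This verifies both requirements of Definition R4 for $I_{i}$.

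The only real obstacle is the structural assertion that in a $\ast$-Krull domain every $\ast$-homog ideal $J$ with $M(J)=M$ is a $\ast$-power of $M$; all remaining content is bookkeeping that invokes Theorems \ref{Theorem P4} and \ref{Theorem R2} in both directions.
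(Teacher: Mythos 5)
Your proof is correct and takes the route the paper intends: the paper itself offers only a one-sentence remark in place of a proof (that the type-$2$ condition makes $M(I)$ $\ast$-invertible so that $D$ is $\ast$-Krull ``right away''), and your reduction to Theorems \ref{Theorem P4} and \ref{Theorem R2}, with a converse modelled on their converses, supplies exactly the details being waved at. The only cosmetic slip is at the very end, where ``$(J^{m})^{\ast}$ principal for some $m$'' should be upgraded to ``every finite-type $\ast$-ideal containing $J$ has a principal $\ast$-power'' before concluding that $J$ is $\ast$-af-homog --- but this follows from the same appeal to \cite[Theorem 3.9]{Z}, since an AGCD P$\ast$MD has torsion $t$-class group, so it is not a genuine gap.
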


The proof should be somewhat simpler than that of Theorem \ref{Theorem P4}
because we have assumed $I$ $\ast$-af-homog and that makes $M(I)$ $\ast $%
-invertible for each $\ast$-af-homog ideal $I,$ making $D$ a $\ast$-Krull
domain right away.

The AGCD $\ast $-Krull domains were first studied by U. Storch in \cite{S}.
The easiest to access these domains is taking Dedekind domains with torsion
class groups.

\begin{definition}
\label{Definition S} Call a $\ast$-homog ideal $\ast$-factorial homog ($\ast 
$-f-homog) if every $\ast$-ideal of finite type containing $I$ is principal$%
. $
\end{definition}

In other words, repeating Definition \ref{Definition Q}, a nonzero $\ast $%
-ideal of finite type is called $\ast $-f-homog if (1)S for each $\ast $%
-ideal of finite type containing $I$ is principal and (2)S For every pair of
proper integral $\ast $-ideals $A,B$ of finite type containing $I,$ $%
(A+B)^{\ast }\neq D.$ Indeed a $\ast $-f-homog ideal $I$ is $\ast $-super
homog and so has all the properties listed in Proposition \ref{Proposition
Q1}. In particular as a $\ast $-f-homog ideal is principal, we can use
\textquotedblleft $\ast $-f-homog element $x$\textquotedblright\ instead of
\textquotedblleft $\ast $-f-homog ideal $xD$\textquotedblright .
Consequently, we can say that, the set of all factors of $\ast $-f-homog
elements is totally ordered under inclusion of the principal ideals
generated by them, i.e. $\ast $-f-homog element is a rigid element. To be
exact we have the following result linking \textquotedblleft rigid
element\textquotedblright\ with \textquotedblleft $\ast $-f-homog
element\textquotedblright .

\begin{proposition}
\label{Proposition S1} For the generator of the principal ideal $rD$ the
following are equivalent. (1) $rD$ is $\ast$-f-homog, (2) $r$ is a rigid
element that belongs to a unique maximal $t$-ideal and every $\ast$-ideal of
finite type containing $r$ is principal.
\end{proposition}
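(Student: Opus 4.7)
The plan is to verify both implications using Theorem \ref{Theorem Q1} as the main workhorse and otherwise unpacking Definition \ref{Definition S} directly.

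For $(1) \Rightarrow (2)$, the principality clause is literally part of Definition \ref{Definition S}, so nothing is needed for it. To show rigidity of $r$ (factors of $r$ totally ordered under divisibility), I would observe that $\ast$-f-homog implies $\ast$-super homog; then for any two nonunit divisors $a, b$ of $r$ the ideals $aD, bD$ are proper finitely generated ideals containing $rD$, hence by Theorem \ref{Theorem Q1}(1) each is itself $\ast$-super homog with $M(aD) = M(bD) = M(rD)$, and Theorem \ref{Theorem Q1}(2) forces $aD \subseteq bD$ or $bD \subseteq aD$. For the unique-maximal-$t$-ideal clause, I would argue by contradiction: suppose $P_1 \neq P_2$ are two maximal $t$-ideals containing $r$, pick $a \in P_1 \setminus P_2$ and $b \in P_2 \setminus P_1$, and use the principality hypothesis to write $(r, a)^{\ast} = cD$ and $(r, b)^{\ast} = c'D$. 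Since $\ast \leq t$ each $P_i$ is a $\ast$-ideal, so $cD \subseteq P_1$ yields $c \in P_1$; from $c \mid a$ and $a \notin P_2$ one then gets $c \notin P_2$, and symmetrically $c' \in P_2 \setminus P_1$. Both $c$ and $c'$ divide $r$, so rigidity yields $c \mid c'$ or $c' \mid c$; the first gives $c' \in cD \subseteq P_1$, contradicting $c' \notin P_1$, and the second gives $c \in c'D \subseteq P_2$, contradicting $c \notin P_2$.

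For $(2) \Rightarrow (1)$, the principality condition transports verbatim into the second clause of Definition \ref{Definition S}, so only $\ast$-homogeneity of $rD$ needs checking. Given proper $\ast$-ideals $J, K$ of finite type containing $rD$, the hypothesis makes each principal, say $J = aD$ and $K = bD$ with $a, b$ nonunit divisors of $r$; rigidity yields $aD \subseteq bD$ or $bD \subseteq aD$, so $(J + K)^{\ast}$ coincides with $J$ or $K$ and in particular is proper. The unique-maximal-$t$-ideal hypothesis is not actually used in this direction (it can be recovered from the other two conditions by the same cross-divisibility argument used above) and appears in the statement only for descriptive symmetry.

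The hard part is the unique-maximal-$t$-ideal step in $(1) \Rightarrow (2)$: locating the witnesses $a, b$ on opposite sides of $P_1$ and $P_2$, passing to their principal $\ast$-closures $cD$ and $c'D$, and then using rigidity to force one of these cofactors into a prime it was supposed to avoid. Everything else is a direct application of Definition \ref{Definition S} and Theorem \ref{Theorem Q1}.
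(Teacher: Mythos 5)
Your proof is correct, and for the rigidity claim and for $(2)\Rightarrow(1)$ it is essentially the paper's argument: the paper also derives rigidity from the fact that a $\ast$-f-homog ideal is $\ast$-super homog, so that any two principal (hence finitely generated, proper) ideals $aD,bD\supseteq rD$ are $\ast$-super homog by Theorem \ref{Theorem Q1}(1) and comparable by Theorem \ref{Theorem Q1}(2); and the converse direction in the paper is word-for-word your reduction of $(J+K)^{\ast}\neq D$ to comparability of principal ideals. The one place you genuinely diverge is the unique-maximal-$t$-ideal clause. The paper disposes of it in a single line --- ``condition (2)S ensures that $r$ belongs to a unique maximal $\ast$-ideal'' --- which appeals to Proposition \ref{Proposition A} and in fact only yields uniqueness of the maximal \emph{$\ast$}-ideal, whereas the statement asserts uniqueness of the maximal \emph{$t$}-ideal; these need not coincide for a general finite-character $\ast\leq t$. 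Your contradiction argument (choosing $a\in P_{1}\setminus P_{2}$, $b\in P_{2}\setminus P_{1}$, passing to the principal generators $c,c'$ of $(r,a)^{\ast}$ and $(r,b)^{\ast}$ --- which land in $P_{1}$ and $P_{2}$ respectively because maximal $t$-ideals are $\ast$-ideals when $\ast\leq t$ --- and then letting rigidity force one of them into the wrong prime) proves the $t$-version directly, so it actually closes a small gap in the paper's proof rather than merely restating it. Your observation that the unique-maximal-$t$-ideal hypothesis is redundant in $(2)\Rightarrow(1)$ is also correct and consistent with the paper, which does not use it in that direction either.
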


Proof. (1) $\Rightarrow$ (2) Suppose $rD$ is $\ast$-f-homog then, as already
mentioned, $r$ is rigid i.e. for each pair $x,y$ of factors of $r,$ $x|y$ or 
$y|x.$ The condition (2)S ensures that $r$ belongs to a unique maximal $\ast 
$-ideal the remainder is taken care of by condition (1)S. For (2) $%
\Rightarrow$ (1) Let $I$ be a $\ast$-ideal of finite type containing $rD.$
Then $I$ contains $r$ and hence must be principal, this takes care of (1)S.
Next let $A$ and $B$ be two integral $\ast$-ideals of finite type containing 
$rD.$ Then we have seen that $A=xD$ and $B=yD$ for some factors $x $ and $y$
of $r.$ But as $x|y$ or $y|x$ we have $(A+B)^{\ast}\neq D$ which is (2)S.

It may be noted that merely saying \textquotedblleft$r$ is a rigid element
belonging to a maximal $\ast$-ideal $P$\textquotedblright, is not enough. We
need to make sure that every finite type proper $\ast$-ideal containing $r$
is principal. This is because in the Dedekind domain $%
\mathbb{Z}
\lbrack\sqrt{-5}],$ where, of course, $\ast=d.$ For the prime ideal $P=(2,1+%
\sqrt{-5})$ we have $P^{2}=(2)$ where $2$ is irreducible in $%
\mathbb{Z}
\lbrack\sqrt{-5}]$ and so fits the definition of a rigid element, but $2$ is
not $\ast$-f-homog because $P$ contains $2$ yet $P$ is not principal.

Note that the $\ast$-product $(IJ)^{\ast}$ of two similar $\ast$-f-homog
ideals $I,J$ is a $\ast$-f-homog ideal similar to both $I$ and $J.$ ($I,J$
are similar $\ast$-super homog, so $(IJ)^{\ast}$ is $\ast$-super homog,
similar to $I$ and $J$ and ($(IJ)^{\ast}\subseteq I\subseteq J$ or $%
(IJ)^{\ast}\subseteq J\subseteq I),$ say $I\subseteq J.$ Now let $C$ be a
star ideal of finite type containing $(IJ)^{\ast}$. Since $I$ is principal
so is $I^{-1}$ and so $CI^{-1}$ is a $\ast$-ideal of finite type and $%
CI^{-1}\supseteq(IJ)^{\ast }I^{-1}=J.$ So $CI^{-1}$ is principal which
forces $C$ to be principal.) Consequently a product of finitely many $\ast$%
-f-homog ideals/elements is expressible, uniquely, up to associates and
order, as a product of mutually $\ast$-comaximal $\ast$-f-homog
ideals/elements.

\begin{theorem}
\label{Theorem S2} Suppose that every nonzero non unit $x$ of $D$ generates $%
xD$ that is expressible as a $\ast $-product of finitely many $\ast $%
-f-homog ideals. Then $D$ is a GCD $\ast $-IRKT whose nonzero non units are
uniquely expressible as products of mutually co prime $\ast $-f-homog
elements. Conversely if $D$ is a GCD $\ast $-IRKT then every proper
principal ideal of $D$ is expressible as a finite $\ast $-product of $\ast $%
-f-homog ideals.
\end{theorem}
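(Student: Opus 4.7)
The plan is to leverage two facts about $\ast$-f-homog ideals that are essentially built into the definition: each $\ast$-f-homog ideal is principal (apply (1)S to itself) and each one is $\ast$-super homog. The hypothesis therefore places us inside a $\ast$-super SH domain, so Proposition \ref{Proposition Q3} immediately yields that $D$ is a $\ast$-IRKT. This disposes of the structural half of the forward direction and lets us use every result proved for $\ast$-IRKTs in what follows.

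For the unique factorization of a nonzero nonunit $x$ into mutually coprime $\ast$-f-homog elements, I would apply Proposition \ref{Proposition K} together with the observation in the paragraph immediately preceding the theorem that a $\ast$-product of mutually similar $\ast$-f-homog ideals is again $\ast$-f-homog. Regrouping the given factorization of $xD$ into similarity classes therefore produces the unique $\ast$-product decomposition into mutually $\ast$-comaximal $\ast$-f-homog ideals; condition (1)S lets us rewrite each factor as a principal ideal and so as an element, and $\ast$-comaximality becomes coprimality.

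The GCD property is the only genuinely new computation. Given nonzero nonunits $a,b$ with $(a,b)^{\ast}\neq D$, apply Proposition \ref{Proposition Q6} to write $(a,b)^{\ast}=(K_{1}K_{2}\cdots K_{r})^{\ast}$, where each $K_{i}^{\ast}$ equals one of the $\ast$-f-homog factors of $a$ or of $b$ spawning the corresponding maximal $\ast$-ideal. Each such factor is principal by (1)S, so $(a,b)^{\ast}$ is a $\ast$-product of principal ideals, hence principal. Since principal ideals are $v$-ideals and $(a,b)\subseteq (a,b)^{\ast}$, we get $(a,b)_{v}=(a,b)^{\ast}$ principal, which is exactly the GCD condition. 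The case $(a,b)^{\ast}=D$ is trivial, as is the case where $a$ or $b$ is a unit.

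For the converse, let $D$ be a GCD $\ast$-IRKT. Being a $\ast$-IRKT $D$ is a P$\ast$MD, so $F^{\ast}=F_{v}$ for every $F\in f(D)$; in a GCD domain each such $F_{v}$ is principal, so every $\ast$-ideal of finite type in $D$ is principal. By Proposition \ref{Proposition Q3} each principal $xD$ is a $\ast$-product of $\ast$-super homog ideals of the form $xD_{P}\cap D$, and because every $\ast$-ideal of finite type containing such a factor is automatically principal, each factor is in fact $\ast$-f-homog. The main obstacle is the GCD verification in the forward direction, which requires matching the notation of Proposition \ref{Proposition Q6} carefully enough to recognize that each factor $K_{i}$ in its conclusion is one of the principal $\ast$-f-homog ideals supplied by the hypothesis.
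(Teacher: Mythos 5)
Your proof is correct and follows essentially the same route as the paper's: Proposition \ref{Proposition Q3} gives the $\ast$-IRKT structure, Proposition \ref{Proposition K} plus the closure of similar $\ast$-f-homog ideals under $\ast$-products gives the unique coprime factorization, and the GCD property comes from the Proposition \ref{Proposition Q6} decomposition with each factor principal by condition (1)S. Your converse, deducing that every finite-type $\ast$-ideal of a GCD P$\ast$MD is principal so that the factors $xD_{P}\cap D$ are automatically $\ast$-f-homog, is just a self-contained rendering of the paper's appeal to Lemma \ref{Lemma S2a}.
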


\begin{proof}
Because every $\ast $-f-homog ideal is $\ast $-super homog ideal, $D$ is a $%
\ast $-IRKT by Proposition \ref{Proposition Q3}. It is also well known that
if $D$ is a $\ast $-IRKT then $D$ is a P$\ast $MD and so $\ast =t$. We have
already established, in Proposition \ref{Proposition K}, that if $%
xD=(I_{1}I_{2}...I_{n})^{\ast }$, where $I_{i}$ are $\ast $-homog ideals
then $xD=(J_{1}J_{2}...J_{r})^{\ast }$ where $J_{j}$ are mutually $\ast $%
-comaximal $\ast $-homog ideals and this expression is unique up to order
etc. Indeed as a $\ast $-f-homog ideal is $\ast $-homog the statement holds
here too. Now let $a,b$ be two nonzero elements of $D$. We can assume that $%
aD=(A_{1}...A_{r})^{\ast }$ where $A_{i}$ are mutually $\ast $-comaximal $%
\ast $-f-homog ideals. $(A_{1}^{\ast }...A_{r}^{\ast })^{\ast
}=a_{1}D...a_{r}D,$ here $A_{i}^{\ast }=a_{i}D$ because $A_{i}$ is $\ast $%
-f-homog . Similarly, $bD=b_{1}D...b_{s}D$ where $b_{i}$ are mutually $\ast $%
-comaximal. Let $P_{1},P_{2}...P_{n}$ be all the maximal $\ast $-ideals that
contain both $a$ and $b.$ By rearranging we can assume that $%
a_{i}D,b_{i}D\in P_{i},$ for $i=1,2,...n.$ Now as $a_{i}D,b_{i}D$ are $\ast $%
-f-homog belonging to the same maximal $\ast $-ideal $P_{i}$ $a_{i}|b_{i}$
or $b_{i}|a_{i}.$ Thus $(a_{i},b_{i})^{\ast }=a_{i}D$ or $b_{i}D$ according
as $a_{i}|b_{i}$ or $b_{i}|a_{i}.$ Let's denote $(a_{i},b_{i})^{\ast }$ by $%
d_{i}D.$ Proceeding as in Proposition \ref{Proposition Q6} we have $%
(a,b)^{\ast }=(\dprod\limits_{i=1}^{n}(a_{i},b_{i})^{\ast })^{\ast }=(\dprod
d_{i}D)^{\ast }=(d_{1}d_{2}...d_{n}D)^{\ast }=d_{1}d_{2}...d_{n}D.$ Thus $%
(a,b)^{\ast }=d_{1}d_{2}...d_{n}D$ a principal ideal. Applying the $v$%
-operation on both sides we get $(a,b)_{v}=d_{1}d_{2}...d_{n}D$ a principal
ideal. As $a,b$ were arbitrary we conclude that $D$ is a GCD-domain. Also as
we have already established that $D$ is a $\ast $-IRKT, we are done. For the
converse note that in a $\ast $-IRKT \thinspace $\ast =t,d$ a $t$-IRKT is an
IRKT and a $d$-IRKT is a Prufer domain. That a GCD IRKT is semirigid (every
nonzero non unit expressible as a product of finitely many rigid elements)
was established in \cite{ZR3}, where IRKT was dubbed as IKT domain or use
the following lemma.
\end{proof}

\begin{lemma}
\label{Lemma S2a} The following are equivalent for a $\ast $-ideal $I$ of
finite type in a GCD domain $D:$ (1) $I$ is $\ast $-homog, (2) $I$ is $\ast $%
-super homog, (3) $I$ is $\ast $-f-homog and (4) $I=rD$ where $r$ is a rigid
element.
\end{lemma}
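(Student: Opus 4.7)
The plan is to run the cycle $(1)\Rightarrow (2)\Rightarrow (3)\Rightarrow (4)\Rightarrow (1)$, exploiting three features of a GCD domain $D$: it is a PVMD, so for every finitely generated $F$ the closure $F_{v}=F_{t}$ is principal; the $t$-class group is trivial, so every $t$-invertible $t$-ideal is principal; and pairwise GCDs exist. Throughout I treat the lemma in the P$\ast $MD setting in which it is actually applied in Theorem \ref{Theorem S2}, so that every finite-type $\ast $-ideal is $\ast $-invertible with $\ast $-closure equal to its $v$-closure.

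For $(1)\Rightarrow (2)\Rightarrow (3)$ the key observation is that in this setting every $\ast $-ideal $J$ of finite type containing $I$ is already principal. Indeed, writing $J=F^{\ast }$ with $F\in f(D)$, the GCD hypothesis gives $F^{\ast }=F_{v}=rD$, so $J=rD$. This immediately yields $(1)\Rightarrow (2)$ (principal ideals are $\ast $-invertible) and $(2)\Rightarrow (3)$, since a $\ast $-invertible $\ast $-ideal of finite type is $t$-invertible, hence principal by triviality of the $t$-class group.

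For $(3)\Rightarrow (4)$ I take $J=I$ itself: as a finite-type $\ast $-ideal containing $I$, it is principal by $(3)$, so $I=rD$, and Proposition \ref{Proposition S1} identifies $r$ as a rigid element. For $(4)\Rightarrow (1)$, by Corollary \ref{Corollary C} it suffices to check that $rD$ belongs to a unique maximal $\ast $-ideal. Suppose for contradiction $r\in M_{1}\cap M_{2}$ for distinct maximal $\ast $-ideals $M_{1}\neq M_{2}$, and choose $a\in M_{1}\setminus M_{2}$ and $b\in M_{2}\setminus M_{1}$. Set $x=\gcd (r,a)$ and $y=\gcd (r,b)$, both of which divide $r$. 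If $x\mid y$, then $x\mid b$, so $b\in xD\subseteq M_{1}$, contradicting $b\notin M_{1}$; symmetrically $y\nmid x$. Thus $r$ has two incomparable factors, contradicting rigidity, so $rD$ is $\ast $-rigid and therefore $\ast $-homog.

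The most delicate step is $(1)\Rightarrow (3)$: securing that every finite-type $\ast $-ideal containing a $\ast $-homog $I$ is principal genuinely uses the P$\ast $MD/PVMD structure, since without it (e.g.\ in $\mathbb{Z}[x]$ with $\ast =d$) the maximal ideal $(2,x)$ is $d$-homog but not principal. Once principality is in hand, the other implications are essentially bookkeeping, and the GCD extraction of incomparable factors of $r$ in $(4)\Rightarrow (1)$ is short and self-contained.
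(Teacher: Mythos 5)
Your proof is correct and, for three of the four implications, runs on the same engine as the paper's: the observation that every finite-type $\ast $-ideal of a GCD domain is principal once $\ast $ is read as $t$, plus Proposition \ref{Proposition S1} for $(3)\Rightarrow (4)$. The genuine divergence is in $(4)\Rightarrow (1)$. The paper argues constructively: it exhibits $M=\{x:(r,x)_{v}\neq D\}$, uses the total ordering of the factors of the rigid element $r$ to show $M$ is a $t$-ideal, and then shows that every $t$-ideal containing $r$ lies in $M$, so that $M=M(rD)$ is identified explicitly. You instead argue by contradiction through Corollary \ref{Corollary C}: two distinct maximal $\ast $-ideals through $r$ would produce, via $x=\gcd (r,a)$ and $y=\gcd (r,b)$, two incomparable factors of $r$. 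Your version is shorter and leans on the already-developed machinery of maximal $\ast $-ideals, while the paper's buys the explicit description of $M(rD)$ as a by-product. One step you leave tacit: $xD\subseteq M_{1}$ (equivalently, that $\gcd (r,a)$ is a non-unit) holds because $xD=(r,a)_{v}=(r,a)_{t}\subseteq M_{1}$, which uses that $M_{1}$ is a $t$-ideal --- precisely the point where your own $\mathbb{Z}[x]$, $\ast =d$ example shows the implication fails for a general finite-character $\ast $. Your explicit restriction to the P$\ast $MD/$\ast =t$ setting is in fact more careful than the paper's bare assertion that ``in a GCD domain $\ast =t$,'' and it matches the only context (Theorem \ref{Theorem S2}) in which the lemma is invoked.
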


\begin{proof}
Note that in a GCD domain $\ast =t.$ Now (1) $\Rightarrow $ (3) because $I$
is $\ast $-homog such that every $\ast $-ideal of finite type of $D$ is
principal, being $t$-ideal of finite type of a GCD domain and one that fits
the definition of a $\ast $-f-homog ideal (3) $\Rightarrow $ (2) because
principal is invertible and (2) $\Rightarrow $ (1) is obvious. Now, in a GCD
domain a rigid element $r$ belongs to a unique maximal $t$-ideal $M=\{x,$ $%
(r,x)_{v}\neq D\}.$ This is because $(r,x)_{v}\neq D$ implies that $r$ has a
non unit common factor $r_{x}$with $x$ So for each $x\in M\backslash \{0\}$
we have $x=r_{x}(x/r_{x})$ where $r_{x}$ is a non unit factor of $r.$ Now
let $x_{1},...,x_{n}\in M\backslash \{0\}.$ Then $%
x_{i}=r_{x_{i}}(x/r_{x_{i}})$ where $r_{x_{i}}$ are non unit factors of the
rigid element $r.$ Since for all $a,b|r$ we have $a|b$ or $b|a$ we have $%
(x_{1},x_{2},...,x_{n})\subseteq (r_{x_{j}})$ which means $%
(x_{1},x_{2},...,x_{n})_{v}\subseteq (r_{x_{j}})$ for some $1\leq j\leq n.$
But as $(r_{x_{j}},x)_{v}=(r_{x_{j}})\neq D$ we have $%
(x_{1},x_{2},...,x_{n})_{v}\subseteq (r_{x_{j}})\subseteq M$ and $M$ is a $t$%
-ideal and there is no $t$-ideal not contained in $M$ that contains $r.$ For
if $N$ were such a $t$-ideal, then there is say $\alpha \in N\backslash M.$
But then $(\alpha ,r)_{v}=D.$ Whence any $t$-ideal containing $r$ must be
contained in $M$ . Thus $I=rD$ is $\ast $-homog and (4) $\Rightarrow $ (1).
Now a $\ast $-f-homog ideal that is principal must be a generated by a rigid
element by Proposition \ref{Proposition S1} and this establishes (3) $%
\Rightarrow $ (4).
\end{proof}

We can call a $\ast $-f-homog ideal $I$ $=xD$ a $\ast $-f-homog ideal of
type $1$ if, in addition, 1$I$ is $\ast $-homog of type $1$. Indeed $I$ $=xD$
a $\ast $-f-homog ideal is of type $1$ if and only if for every $\ast $%
-f-homog ideal $A=yD$ containing $xD,$ i.e. $y|x$ in $D,$ there is a
positive integer $n$ such that $x|y^{n}$. If we develop a theory of
factorization on it we will get a theorem like the following.

\begin{theorem}
\label{Theorem S3} The following are equivalent for an integral domain $D:$
(1) For every nonzero non unit $x$ of the domain $D,$ $xD$ is expressible as
a $\ast$-product of finitely many $\ast$-f-homog ideals of type $1$ (2) $D$
is a GCD $\ast$-GKD.
\end{theorem}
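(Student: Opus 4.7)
My plan is to deduce Theorem \ref{Theorem S3} by combining the structural results already proved: Theorem \ref{Theorem S2} characterizes GCD $\ast$-IRKTs via $\ast$-f-homog factorizations, Theorem \ref{Theorem P2} characterizes $\ast$-WKDs via type $1$ factorizations, and Lemma \ref{Lemma S2a} collapses $\ast$-homog, $\ast$-super homog and $\ast$-f-homog into a single notion in the GCD setting. The point is that \emph{type $1$} is an orthogonal condition to \emph{$\ast$-f-homog}, and each is handled by a different previously proved theorem. Recall that a $\ast$-GKD is, by definition, a $\ast$-IRKT whose maximal $\ast$-ideals all have height one.

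For (1) $\Rightarrow$ (2): assume every nonzero non unit $x$ has $xD$ expressible as a $\ast$-product of finitely many $\ast$-f-homog ideals of type $1$. Forgetting the \textquotedblleft of type $1$\textquotedblright\ qualifier, every $xD$ is a $\ast$-product of $\ast$-f-homog ideals, so by Theorem \ref{Theorem S2} the domain $D$ is a GCD $\ast$-IRKT. Forgetting instead the \textquotedblleft $\ast$-f\textquotedblright\ qualifier, a $\ast$-f-homog ideal of type $1$ is in particular a $\ast$-homog ideal of type $1$, so by Theorem \ref{Theorem P2}, $D$ is a $\ast$-WKD, hence each maximal $\ast$-ideal of $D$ has height one. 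A $\ast$-IRKT with all maximal $\ast$-ideals of height one is a $\ast$-GKD, so $D$ is a GCD $\ast$-GKD.

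For (2) $\Rightarrow$ (1): assume $D$ is a GCD $\ast$-GKD. Then $D$ is in particular a $\ast$-GKD, hence by Theorem \ref{Theorem P2} every nonzero non unit $x$ of $D$ satisfies $xD=(I_1 I_2\cdots I_r)^{\ast}$ where each $I_i$ is a $\ast$-homog ideal of type $1$; by the proof of that theorem we may take $I_i=xD_{P_i}\cap D$ for the maximal $\ast$-ideals $P_i$ containing $x$. Now invoke Lemma \ref{Lemma S2a}: in a GCD domain every $\ast$-homog ideal is automatically $\ast$-f-homog. Applying this to each $I_i$ yields that each $I_i$ is simultaneously $\ast$-f-homog and of type $1$, i.e. $\ast$-f-homog of type $1$, which gives (1).

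The main obstacle I anticipate is verifying that the \textquotedblleft type $1$\textquotedblright\ property and the \textquotedblleft $\ast$-f-homog\textquotedblright\ property are indeed compatible in the $(2)\Rightarrow(1)$ direction, i.e. that the same factor ideals $xD_{P_i}\cap D$ witness both. This is handled cleanly by Lemma \ref{Lemma S2a}, which says these qualitative strengths coincide in a GCD domain, so no new calculation is needed. Apart from that, the whole argument is just a bookkeeping exercise in which prior theorem to apply, and I do not foresee further subtleties.
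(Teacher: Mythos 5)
Your proof is correct and follows essentially the same route as the paper: for (1)\,$\Rightarrow$\,(2) the paper likewise gets the GCD $\ast$-IRKT part from Theorem~\ref{Theorem S2} and the height-one part from Proposition~\ref{Proposition Q5} (which itself rests on Theorem~\ref{Theorem P2}, the result you invoke directly). For (2)\,$\Rightarrow$\,(1) the paper merely gestures at \cite{AAZ1} or the observation that a $\ast$-GKD is a $\ast$-IRKT with height-one maximal $\ast$-ideals, so your explicit argument --- producing the factors $xD_{P_i}\cap D$ of type $1$ from the converse of Theorem~\ref{Theorem P2} and then upgrading them to $\ast$-f-homog via Lemma~\ref{Lemma S2a} --- supplies exactly the detail the paper omits.
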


\begin{proof}
By (1), using Theorem \ref{Theorem S2}, $D$ is a GCD $\ast $-IRKT, because
every $\ast $-f-homog ideal of type $1$ is a $\ast $-f-homog ideal. But a $%
\ast $-f-homog ideal of type $1$ is also a $\ast $-super homog ideal of type 
$1$ and so Proposition \ref{Proposition Q5} applies to give that $D$ is a
GCD $\ast $-GKD. For the converse the reader may refer to \cite{AAZ1} or
just note that a $\ast $-GKD is a $\ast $-IRKT whose maximal $\ast $-ideals
are of height $1.$
\end{proof}

The domains of Theorem \ref{Theorem S3} were studied in \cite{AAZ1} under
the name of Generalized Unique Factorization Domains (GUFDs).

We can call a $\ast $-f-homog ideal $xD$ of type 2 if $xD=(M(xD))^{n}$ and
get a theory of UFD's. Of course that is too well known to repeat here.

\section{Restricted or weak theories}

Before we get down to explaining the restricted theories let us take care of
a topic that is in a way essential to them. The topic is that of (integral) $%
\ast $-invertible $\ast $-ideals. It is often noted that an integral
invertible ideal behaves like a principal ideal in many respects, for
example an invertible ideal is locally principal. In fact a nonzero finitely
generated ideal $I$ is invertible if and only if $I$ is locally principal,
i.e., $ID_{M}$ is principal for every maximal ideal $M.$ In a similar manner
a $\ast $-invertible $\ast $-ideal may be characterized by,
\textquotedblleft a $\ast $-ideal of finite type $I$ such that $ID_{P}$ is
principal for each maximal $\ast $-ideal $P$ of $D$\textquotedblright\ (see
We plan to use this feature in the following in a somewhat indirect manner.
But first we must talk about another important property of integral $\ast $%
-invertible $\ast $-ideals, in the context of $\ast $-SH domains.

\begin{theorem}
\label{Theorem 4Aa} Let $D$ be a $\ast$-SH domain and $I$ a $\ast$%
-invertible $\ast$-ideal of $D.$ Then $I$ is uniquely expressible, up to
order, as a $\ast$-product of mutually $\ast$-comaximal $\ast$-homog ideals.
\end{theorem}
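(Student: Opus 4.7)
The plan is to mirror the proof of Corollary~\ref{Corollary L}: represent $I$ as the $\ast$-product of the contractions $ID_{P_{i}}\cap D$ over the finitely many maximal $\ast$-ideals $P_{1},\ldots,P_{n}$ that contain $I$, verify each contraction is $\ast$-homog, and then let Proposition~\ref{Proposition K} promote this factorization to the unique one into mutually $\ast$-comaximal $\ast$-homog ideals.

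Since $I$ is a proper integral $\ast$-invertible $\ast$-ideal (if $I=D$ it is the empty $\ast$-product), it is of finite type and contains a nonzero non-unit $x$. By Theorem~\ref{Theorem M}, $D$ is of finite $\ast$-character, so $I$ lies in only finitely many maximal $\ast$-ideals, say $P_{1},\ldots,P_{n}$. Set $I_{i}:=ID_{P_{i}}\cap D$. Because distinct maximal $\ast$-ideals of a $\ast$-SHD share no nonzero prime (Theorem~\ref{Theorem M}), \cite[Lemma~4.1]{AZ2} gives $D_{P_{i}}D_{Q}=K$ for every maximal $\ast$-ideal $Q\neq P_{i}$, and the identity $I_{i}D_{Q}=ID_{P_{i}}D_{Q}\cap D_{Q}=K\cap D_{Q}=D_{Q}$ shows that $I_{i}$ sits in no maximal $\ast$-ideal other than $P_{i}$.

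Next, I would use local principality of $\ast$-invertible $\ast$-ideals at maximal $\ast$-ideals to pick $\alpha_{i}\in I$ with $ID_{P_{i}}=\alpha_{i}D_{P_{i}}$; then $I_{i}=\alpha_{i}D_{P_{i}}\cap D$, and since $\alpha_{i}\in I\subseteq P_{i}$ is a nonzero non-unit of $D$, Corollary~\ref{Corollary L} applied to $\alpha_{i}$ with $M=P_{i}$ tells me $I_{i}$ is a $\ast$-invertible $\ast$-homog ideal, spawned by $P_{i}$ by the previous step. To confirm $I=(I_{1}\cdots I_{n})^{\ast}$ I localize at each maximal $\ast$-ideal $M$: pairwise $\ast$-comaximality of the $I_{i}$ forces $(I_{1}\cdots I_{n})D_{M}=I_{j}D_{P_{j}}=ID_{P_{j}}$ when $M=P_{j}$, and $(I_{1}\cdots I_{n})D_{M}=D_{M}=ID_{M}$ for the remaining $M$. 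Intersecting over all $M\in\ast\text{-Max}(D)$ yields $(I_{1}\cdots I_{n})^{\ast_{w}}=I^{\ast_{w}}=I$, and applying $\ast$ (using the general identity $(A^{\ast_{w}})^{\ast}=A^{\ast}$) gives $(I_{1}\cdots I_{n})^{\ast}=I$.

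At this point, $I$ has been written as a $\ast$-product of $\ast$-homog ideals, and Proposition~\ref{Proposition K} supplies the unique (up to order) expression of $I$ as a $\ast$-product of mutually $\ast$-comaximal $\ast$-homog ideals; by construction, those factors must be the $I_{i}$'s themselves. The only step requiring more than the citation of earlier results is checking that each contraction $ID_{P_{i}}\cap D$ is of finite type, and the main obstacle is precisely this: one resolves it by using local principality of $\ast$-invertible $\ast$-ideals to reduce to Corollary~\ref{Corollary L} applied to a single local generator $\alpha_{i}$, after which the remainder is routine bookkeeping with $\ast_{w}$-intersections analogous to the proof of $(2)\Rightarrow(1)$ in Theorem~\ref{Theorem M}.
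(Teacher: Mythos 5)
Your proof is correct and follows essentially the same route as the paper: decompose $I$ as the intersection of the contractions $ID_{P_{i}}\cap D$ over the finitely many maximal $\ast$-ideals containing $I$, show each contraction lies only in $P_{i}$ via \cite[Lemma 4.1]{AZ2}, identify the intersection with the $\ast_{w}$- and hence $\ast$-product, and invoke Proposition \ref{Proposition K} for uniqueness. The only difference is cosmetic: where the paper deduces that each $I_{i}$ is of finite type directly from finite $\ast$-character, you route through local principality of the $\ast$-invertible ideal $I$ and Corollary \ref{Corollary L} applied to a local generator $\alpha_{i}$, which is a perfectly valid (and arguably more explicit) justification of the same step.
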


Proof. Indeed as $D$ is of finite $\ast $-character, $I$ is contained in at
most a finite number of maximal $\ast $-ideals $P_{1},P_{2},...,P_{n}$ we
have $I=\cap _{i=1}^{n}(ID_{P_{i}}\cap D).$ Now because $P_{i}$ shares no
nonzero prime ideal with any other maximal $t$-ideal we conclude that none
of $I_{i}=(ID_{P_{i}}\cap D)$ is contained in any maximal $\ast $-ideal
other than $P_{i}$ for $i=1,...,n.$ Next as $D$ is of finite $\ast $%
-character each of $I_{i}$ is a $\ast $-ideal of finite type. Thus each of $%
I_{i}$ is $\ast $-homog. Also $I_{i}$ are mutually $\ast $-comaximal by
Remark \ref{Remark D}. So, $I=\cap
_{i=1}^{n}I_{i}=(I_{1}I_{2}...I_{n})^{\ast _{w}}=(I_{1}I_{2}...I_{n})^{\ast
} $. That this expression is unique, up to order, follows from proofs of
similar results in earlier sections such as Proposition \ref{Proposition K}.

\begin{corollary}
\label{Corollary 4Ab} Let $D$ be a $\ast$-SH domain. Then the $\ast$-class
group of $D$ is $0$ if and only if every $\ast$-invertible $\ast$-homog
ideal of $D$ is principal.
\end{corollary}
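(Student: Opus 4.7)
The forward implication is immediate from the definition of $Cl_{\ast }(D)$: if $Cl_{\ast }(D)=0$ then every $\ast $-invertible $\ast $-ideal is in $P(D)$, i.e.\ principal as a fractional ideal, and any integral principal fractional ideal is of the form $xD$ with $x\in D$. Since a $\ast $-homog ideal is by definition integral, every $\ast $-invertible $\ast $-homog ideal is therefore principal.

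For the converse, assume every $\ast $-invertible $\ast $-homog ideal is principal. Let $J$ be a $\ast $-invertible $\ast $-ideal of $D$; to show $[J]=0$ in $Cl_{\ast }(D)$, I may replace $J$ by $dJ$ for a suitable $0\neq d\in D$ with $dJ\subseteq D$, since multiplying by a principal ideal does not alter the class. Thus I may assume $J$ is an integral $\ast $-invertible $\ast $-ideal. By Theorem \ref{Theorem 4Aa} I can write
\[
J=(I_{1}I_{2}\cdots I_{n})^{\ast },
\]
where the $I_{i}$ are mutually $\ast $-comaximal $\ast $-homog ideals.

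The crux is to show that each factor $I_{i}$ is itself $\ast $-invertible, so that the hypothesis applies. Fix $i$ and consider $A_{i}:=(I_{1}\cdots \widehat{I_{i}}\cdots I_{n}\,J^{-1})^{\ast }$, where the hat means omission. A direct computation using the associativity of the $\ast $-product gives
\[
(I_{i}\,A_{i})^{\ast }=\bigl((I_{1}\cdots I_{n})^{\ast }J^{-1}\bigr)^{\ast }=(JJ^{-1})^{\ast }=D,
\]
so $I_{i}$ is $\ast $-invertible with inverse $A_{i}$. Being a $\ast $-invertible $\ast $-homog ideal, $I_{i}=x_{i}D$ by hypothesis. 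Consequently
\[
J=(x_{1}D\cdots x_{n}D)^{\ast }=x_{1}\cdots x_{n}D,
\]
which is principal. Hence $[J]=0$ in $Cl_{\ast }(D)$, and since $J$ was arbitrary, $Cl_{\ast }(D)=0$.

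The main potential obstacle is the verification that each $I_{i}$ inherits $\ast $-invertibility from $J$; this is the only place where the structure of Theorem \ref{Theorem 4Aa} is used non-trivially, but the explicit inverse $A_{i}$ above takes care of it cleanly, and the remaining steps are bookkeeping with the $\ast $-product.
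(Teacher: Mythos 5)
Your proof is correct and takes exactly the route the paper intends: the corollary is left without an explicit proof there, being presented as an immediate consequence of Theorem \ref{Theorem 4Aa} (decompose an integral $\ast$-invertible $\ast$-ideal into mutually $\ast$-comaximal $\ast$-homog factors and apply the hypothesis to each). Your explicit check that each factor $I_{i}$ inherits $\ast$-invertibility from $J$ via the inverse $A_{i}=(I_{1}\cdots \widehat{I_{i}}\cdots I_{n}J^{-1})^{\ast}$ is a worthwhile addition, since Theorem \ref{Theorem 4Aa} does not assert $\ast$-invertibility of the factors in its statement.
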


Taking a cue from the above result we make the following definition.

\begin{definition}
\label{Definition 4Ac} An integral $\ast$-ideal $I$ is a $\ast$-weakly
factorial homogeneous ($\ast$-wf-homog) ideal if $I$ is $\ast$-homog such
that when $\ast$-invertible every $\ast$-invertible $\ast$-ideal $J$
containing $I$ is a principal ideal.
\end{definition}

So a $\ast $-homog ideal $I$ is $\ast $-wf-homog if $I$ is principal along
with all the $\ast $-invertible $\ast $-ideals containing it in the event
that $I$ is $\ast $-invertible, otherwise it is just a $\ast $-homog ideal.
This is because being $\ast $-homog $I$ is of finite type and $I$ is
contained in a unique maximal $\ast $-ideal $M,$ so $I$ is $\ast $%
-invertible if and only if $ID_{M}$ is principal. We may call the generator
of a principal $\ast $-wf-homog ideal an $\ast $-wf-homog element.

It is easy to see that the $\ast $-product $(IJ)^{\ast }$ of two similar $%
\ast $-wf-homog ideals $I,J$ is a $\ast $-wf-homog ideal similar to both $I$
and $J.$ ($I,J$ are similar $\ast $-homog, so $(IJ)^{\ast }$ is $\ast $%
-homog and similar to $I$ and $J$. Also if $(IJ)^{\ast }$ is $\ast $%
-invertible then so are both of $I$ and $J$ and hence, by definition, have
the property that every $\ast $-invertible $\ast $- ideal containing each is
principal. Next let $X$ be a $\ast $-invertible $\ast $-ideal such that $%
X\supseteq IJ$. Then $XI^{-1}\supseteq J$ making $XI^{-1}$ a principal
ideal, because it is a $\ast $-invertible $\ast $-ideal that contains $J$.
Now if $XI^{-1}$ is principal, say $XI^{-1}=rD$, then, since $I$ is already
principal, $XI^{-1}$ is principal.) Consequently a product of finitely many $%
\ast $-wf-homog elements is expressible, uniquely, up to associates and
order, as a product of mutually $\ast $-comaximal $\ast $-wf-homog elements.

\begin{definition}
\label{Definition 4Ad} Call an integral domain $D$ a $\ast $- weakly
factorial SH ($\ast $-wf-SH) domain, if every nonzero non unit of $D$ is
expressible as a finite product of $\ast $-wf-homog elements.
\end{definition}

\begin{proposition}
\label{Proposition 4Ae} A $\ast$-SH domain with trivial $\ast$-class group
is a $\ast$-wf-SH domain. Conversely a $\ast$-wf-SH domain is a $\ast$-SH
domain with trivial $\ast$-class group.
\end{proposition}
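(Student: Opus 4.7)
The plan is to handle the two directions separately, leaning on Corollary \ref{Corollary L} and Theorem \ref{Theorem 4Aa} as the structural backbone, with the definition of $\ast$-wf-homog doing the rest.

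For the first direction, assume $D$ is a $\ast$-SH domain with $Cl_\ast(D)=0$, and let $x$ be a nonzero non unit of $D$. Corollary \ref{Corollary L} gives a unique expression $xD=(I_1I_2\cdots I_r)^\ast$ with the $I_i$ mutually $\ast$-comaximal $\ast$-homog and each $I_i$ $\ast$-invertible. Triviality of the $\ast$-class group forces every $\ast$-invertible $\ast$-ideal to be principal, so each $I_i=x_iD$ for some $x_i\in D$, and then $xD=(x_1x_2\cdots x_r)D$, i.e.\ $x$ equals $u\,x_1\cdots x_r$ for a unit $u$. It remains to check that each $x_iD$ is $\ast$-wf-homog; but this is immediate since $x_iD$ is a principal $\ast$-homog ideal and any $\ast$-invertible $\ast$-ideal containing it is automatically principal (again because $Cl_\ast(D)=0$). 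Absorbing the unit into one of the factors, $x$ is a finite product of $\ast$-wf-homog elements.

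For the converse, assume $D$ is $\ast$-wf-SH. First, every nonzero non unit of $D$ is by definition a product of $\ast$-wf-homog elements, each of which generates a principal (hence $\ast$-ideal of finite type) $\ast$-homog ideal; so $D$ is in particular a $\ast$-SH domain. To show $Cl_\ast(D)=0$ it suffices, by Theorem \ref{Theorem 4Aa}, to show that every $\ast$-invertible $\ast$-homog ideal $J$ of $D$ is principal, since any $\ast$-invertible $\ast$-ideal decomposes as a $\ast$-product of mutually $\ast$-comaximal $\ast$-invertible $\ast$-homog ideals. Let $M=M(J)$. Since $J$ is $\ast$-invertible, $JD_M$ is principal, say $JD_M=aD_M$ with $a\in J$, and by Proposition \ref{Proposition G} we have $J=JD_M\cap D=aD_M\cap D$.

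Now use the hypothesis on $a$: write $aD=(Q_1Q_2\cdots Q_t)^\ast$ as a $\ast$-product of mutually $\ast$-comaximal principal $\ast$-wf-homog ideals $Q_j=a_jD$ (regrouping the $\ast$-wf-homog factorization by similarity as in the proof of Proposition \ref{Proposition K}). Exactly one $Q_j$, say $Q_1=a_1D$, lies in $M$; the rest are $\ast$-comaximal with $M$. By Corollary \ref{Corollary H} (or the argument in Theorem \ref{Theorem 4Aa}), $aD_M\cap D=a_1D$, so $J=a_1D$ is principal. The main obstacle is precisely this last step: one must verify that when the principal decomposition of $aD$ is localized at $M$, all the $\ast$-comaximal factors disappear and only $a_1D$ survives, so that $J$ inherits principality from the single $\ast$-wf-homog factor of $a$ living at $M$. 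Once this is in hand, every $\ast$-invertible $\ast$-ideal is principal, hence $Cl_\ast(D)=0$, completing the converse.
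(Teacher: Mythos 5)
Your proposal is correct, and both directions follow the paper's overall skeleton, but the converse finishes by a genuinely different route. The forward direction is the paper's argument verbatim: decompose $xD$ by Corollary \ref{Corollary L}, use $Cl_{\ast }(D)=0$ to make each $\ast $-comaximal factor principal, and note that the same hypothesis makes each principal factor $\ast $-wf-homog. In the converse, the paper, after reducing via Theorem \ref{Theorem 4Aa} to a single $\ast $-invertible $\ast $-homog factor $I_{k}$, factors an element of $I_{k}$ into $\ast $-wf-homog elements, isolates the unique factor $d_{i}D$ lying in $I_{k}$, and then invokes the defining clause of Definition \ref{Definition 4Ac}: $I_{k}$ is a $\ast $-invertible $\ast $-ideal containing the $\ast $-wf-homog ideal $d_{i}D$, hence principal. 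You instead pick a local generator $a\in J$ with $JD_{M}=aD_{M}$ (a standard choice for a $\ast $-invertible ideal, worth one justifying sentence) and compute $J=JD_{M}\cap D=aD_{M}\cap D=a_{1}D$ exactly, using Proposition \ref{Proposition G} and Corollary \ref{Corollary H}(ii) applied to the $\ast $-comaximal principal decomposition of $aD$. Your version is slightly longer at that point but buys something real: it never uses the \textquotedblleft every $\ast $-invertible $\ast $-ideal containing $I$ is principal\textquotedblright\ clause of $\ast $-wf-homog at all, only that the factors are principal $\ast $-homog ideals, so it shows the formally weaker hypothesis already forces $Cl_{\ast }(D)=0$; the paper's clause-based argument is shorter because that clause does the work in one line. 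Your direct observation that $xD$ is a $\ast $-product of principal $\ast $-homog ideals, hence that $D$ is $\ast $-SH by definition, is also cleaner than the paper's detour through finite $\ast $-character and independence.
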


\begin{proof}
Indeed if the $\ast $-class group of $D$ is zero, every $\ast $-invertible $%
\ast $-ideal of $D$ is principal. Now for every nonzero non unit $x$ in a $%
\ast $-SH domain $xD=(I_{1}I_{2}...I_{n})^{\ast },$ where $I_{i}$ are
mutually $\ast $-comaximal $\ast $-invertible $\ast $-ideals. With the added
restriction of trivial $\ast $-class group, each of $I_{i}=x_{i}D$ is a
principal ideal which fits the definition of a $\ast $-wf-homog element
(indeed every $\ast $-invertible $\ast $-ideal containing $x_{i}D$ is
principal because $Cl_{\ast }(D)=(0))$. Hence, as the $\ast $-operation is
ineffective on principal ideals, $xD=d_{1}d_{2}...d_{n}D$, or $%
x=ed_{1}d_{2}...d_{n}$ is a product of $\ast $-wf-homog elements, where $e$
is a unit. Conversely, it is obvious that (a) $D$ is of finite $\ast $%
-character and (b) every prime ideal of $D$ contains a $\ast $-wf-homog
element which generates a $\ast $-homog ideal. It is now easy to show that
every maximal $\ast $-ideal of $D$ is spawned by a $\ast $-wf-homog
principal ideal and that no two distinct maximal $\ast $-ideals contain a
nonzero prime ideal. So, a $\ast $-wf-SH domain $D$ is a $\ast $-SH domain.
Next to show that the $\ast $-class group of $D$ is trivial, take an
integral $\ast $-invertible $\ast $-ideal $I$ of $D$ and let $0\neq x\in I.$
By Theorem \ref{Theorem 4Aa}, $I=(I_{1}I_{2}...I_{n})^{\ast }$ where each of 
$I_{i}$ is a $\ast $-homog ideal and $I_{i}$ mutually $\ast $-comaximal.
Pick one, say $I_{k},$ and note that $x$ is $d_{1}d_{2}...d_{r}$ of $\ast $%
-wf-homog elements belongs to $I_{k}.$ Since for each $i,d_{i}D$ is $\ast $%
-homog and since $d_{i}$ are mutually $\ast $-comaximal, by Corollary \ref%
{Corollary H} only one of the $d_{i}$ belongs to $I_{k}.$ Now $d_{i}$ being
a $\ast $-wf-homog element, $d_{i}D$ has the property that any $\ast $%
-invertible $\ast $-ideal containing it is principal, by Definition \ref%
{Definition 4Ac}. Finally as $I$ and $I_{i}$ were arbitrary, we conclude
that every integral $\ast $-invertible $\ast $-ideal of $D$ is principal.
\end{proof}

Examples: (a) (When $\ast=d$ and no restriction on dimension). Let $(R,M)$
be a regular local domain of dimension $n\geq2$, let $L$ be the quotient
field of $R$ and let $X$ be an indeterminate over $L.$ Then the ring $%
D=R+XL[X]$ is an $n+1$ dimensional $d$-SH factorial domain such that $%
Cl_{d}(D),$ the ideal class group of $D$ is zero.

Illustration: By \cite[Cororollary 1.3]{CMZ} $D$ is a GCD domain and so $%
Cl_{t}(D)=(0)$ and as $Cl_{d}(D)\subseteq Cl_{t}(D)$ we conclude that $%
Cl_{d}(D)=(0).$ Also, by \cite[Theorem 4.21]{CMZ}, the maximal ideals of $D$
are (a) $M+XL[X]$ where $M$ is the maximal ideal of $R$ and (b) ideals of
the type $f(X)D$ where $f(X)$ is an irreducible element and hence a prime,
of $D$ such that $f(0)=1.$ Next a typical nonzero non unit $f(X)$ of $D$ can
be expressed as $\frac{a}{b}X^{r}(1+Xg(X))$ where $a,b\in D\backslash \{0\}$%
, $b=1$ if $r=0$ and $g(X)\in K[X].$ Clearly $f(X)=\frac{a}{b}X^{r}\times
(1+f_{1}(X))^{r_{1}}\times ...\times (1+f_{m}(X))^{r_{m}}$ where the $%
(1+f_{i}(X)),i=1,..,m,$ and $r$, $r_{i}\geq 0,$ are irreducible and hence
generate a principal maximal ideal each and of course $\frac{a}{b}X^{r}\in
M+XL[X]$. (Indeed if $r=0$, $\frac{a}{b}X^{r}=a$ and if $a$ is a non unit
then $a\in M+XL[X].$ Thus $f(X)$ belongs to only finitely many maximal i.e.
maximal $d$-ideals). That no two maximal ideals contain a common nonzero
prime ideal is obvious. In sum $D$ is a $d$-SH factorial domain$,$ which
works out to be an h-local domain with zero ideal class group.

Sticking with $D=R+XL[X],$ where $R$ is quasi local we note that a maximal
ideal (hence a maximal $d$-ideal) of $D$ is either $M+XL[X]$ where $M$ is
the maximal ideal of $R$ or a height one principal prime ideal of the form $%
f(X)D$ (\cite[Theorem 4.21]{CMZ}). Thus a $d$-homog ideal $I$ of $D$ such
that $I\cap R$ is non-trivial is of the form $I\cap R+XL[X].$ On the other
hand any $d$-homog ideal $J$ with $M(J)\cap R=(0)$ would have to be a power
of a prime of the form $f(X)D$ where $f(X)$ generates the maximal ideal $%
M(J).$ For, by part (a) of Proposition 4.12 of \cite{CMZ} $J=h(X)(F+XL[X])$
where $F$ is a $D$-submodule of $L$ and such that $h(0)F\subseteq D$ and $%
h(X)\in L[X].$ We claim that $h(0)\neq 0$ for otherwise $h(X)(F+XL[X])$
would be contained in $XL[X]$ and so in $M+XL[X].$ Finally as $%
J=h(X)(F+XL[X])\subseteq f(X)D$ where $f(X)D$ is a height one prime ideal,
there a positive integer $n$ such that $J\subseteq f(X)^{n}D$ but $%
J\nsubseteq f(X)^{n+1}D$ or $J/f(X)^{n}\subseteq D$ but $J/f(X)^{n}%
\nsubseteq f(X)D.$ We claim that $J/f(X)^{n}$ is not contained in any
maximal deal. For if $J/f(X)^{n}$ is contained in a maximal ideal $N$ then $%
J $ is contained in $N,$ contradicting the assumption that $J$ is $d$-homog.
Thus $J/f(X)^{n}=D,$ making $J=f(X)^{n}D.$ Thus a finite product of $d$%
-homog ideals of $D$ is an ideal of the form $l(X)(A+XL[X])$ where $l(X)$ is
a polynomial in $D$ with $l(0)=1$ and $A$ an ideal of $R.$ Also by Lemma
4.41 of \cite{CMZ} $(l(X)(A+XL[X]))_{t}=l(X)(A_{t}+XL[X]).$Thus we have
proved the following result.

\begin{lemma}
\label{Lemma 4Ae1} Let $(R,M)$ be quasi local, $L$ the quotient field of $R$%
, $X$ an indeterminate over $L$ and let $D=R+XL[X].$ Then $D$ is $d$-SH with 
$d$-homog ideals $J$ described by (a) $J=f(X)^{n}D,$ when $J\cap R=(0)$ and
(b) $J=J\cap R+XL[X],$ when $J\cap R\neq(0)$. Moreover a finite product of $%
d $-homog ideals of $D$ is of the form $J=l(X)(A+XL[X])$ and $%
J_{t}=l(X)(A_{t}+XL[X]).$ Finally, every principal ideal of $D$ is of the
form $J=al(X)D$ where $a\in D.$
\end{lemma}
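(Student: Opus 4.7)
The plan is to leverage the classification of maximal ideals of $D=R+XL[X]$ from \cite[Theorem 4.21]{CMZ}: every maximal ideal is either the ``upper'' ideal $M+XL[X]$ or a height one principal prime $f(X)D$ with $f(X)\in D$ irreducible and $f(0)=1$. Combined with the elementary factorization $f(X)=(a/b)X^{r}\prod_{i}(1+f_{i}(X))^{r_{i}}$ of any nonzero non-unit---the factors $1+f_{i}(X)$ being irreducible and the prefactor $(a/b)X^{r}$ lying in $M+XL[X]$---this immediately gives finite $d$-character. Since the height-one principal primes $f(X)D$ are the only nonzero primes they contain, and they are not contained in $M+XL[X]$, no two distinct maximal ideals share a nonzero prime, so Theorem~\ref{Theorem M} gives that $D$ is $d$-SH.

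Next I would characterize the $d$-homog ideals via the unique maximal $d$-ideal $M(J)$ supplied by Proposition~\ref{Proposition A}. If $J\cap R\neq (0)$, pick $0\neq a\in J\cap R$; since $J\subsetneq D$ forces $a\in M$, and a principal prime $f(X)D$ with $f(0)=1$ cannot contain a nonzero constant (degree considerations), the only maximal ideal of $D$ containing $a$ is $M+XL[X]$, so $M(J)=M+XL[X]$. For any $Xp(X)\in XL[X]$, since $a\in L^{\times}$ we have $Xp(X)/a\in XL[X]\subseteq D$, so $Xp(X)=a\cdot (Xp(X)/a)\in aD\subseteq J$. Hence $XL[X]\subseteq J$, and writing $j=j(0)+Xq(X)\in J$ yields $j(0)=j-Xq(X)\in J\cap R$; this establishes $J=(J\cap R)+XL[X]$ and proves case (b).

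For case (a), I would take a $d$-homog $J$ with $M(J)\cap R=(0)$ (which is the condition actually used in the preceding paragraph of the paper, and is implied by $J\cap R=(0)$ combined with the case-(b) dichotomy), so $M(J)=f(X)D$ for some irreducible $f$ with $f(0)=1$. Here I invoke \cite[Proposition 4.12(a)]{CMZ} to write $J=h(X)(F+XL[X])$ for some $h(X)\in L[X]$ and some $D$-submodule $F\subseteq L$ with $h(0)F\subseteq D$. If $h(0)=0$ then $J\subseteq XL[X]\subseteq M+XL[X]$, contradicting the uniqueness of $M(J)=f(X)D$; hence $h(0)\neq 0$. Since $J\subseteq f(X)D$, pick the largest $n$ with $J\subseteq f(X)^{n}D$. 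Then $J/f(X)^{n}\subseteq D$ is an ideal not in $f(X)D$ (by maximality of $n$) and not in any other maximal ideal (else $J$ would be too, violating the uniqueness of $M(J)$). So $J/f(X)^{n}=D$, i.e. $J=f(X)^{n}D$.

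Finally, for the product clause: a finite $d$-product of $d$-homog ideals collects the case-(b) factors into a single factor $A+XL[X]$ (with $A\subseteq R$ the product of their constant parts inside $R$) and multiplies the case-(a) factors into a polynomial $l(X)=\prod f_{i}(X)^{n_{i}}$ with $l(0)=1$, giving $J=l(X)(A+XL[X])$. The $t$-closure identity $J_{t}=l(X)(A_{t}+XL[X])$ follows directly from \cite[Lemma 4.41]{CMZ}, and the principal-ideal clause follows by reading off the decomposition of a generator $a\in D$ into its $M+XL[X]$-part (call it $a$) times the product $l(X)$ of its irreducible factors with constant term $1$. The main obstacle is the structural step in case (a)---both invoking \cite[Proposition 4.12]{CMZ} to obtain the form $h(X)(F+XL[X])$ and then verifying $h(0)\neq 0$ so that the argument extracting the exact power $f(X)^{n}$ goes through---since the rest of the lemma assembles from the two cases in a routine way.
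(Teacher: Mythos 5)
Your proposal follows the paper's route essentially step for step: finite $d$-character and the independence condition via the \cite[Theorem 4.21]{CMZ} classification of maximal ideals, Theorem \ref{Theorem M} to get $d$-SH, \cite[Proposition 4.12(a)]{CMZ} plus the ``extract the exact power of $f(X)$'' argument for case (a), and \cite[Lemma 4.41]{CMZ} for the $t$-closure identity. Your treatment of case (b) is actually more complete than the paper's, which merely asserts that $J\cap R\neq(0)$ forces $J=(J\cap R)+XL[X]$; your observation that a nonzero $a\in J\cap R$ satisfies $aD\supseteq XL[X]$ (because $X p(X)/a\in XL[X]\subseteq D$) is exactly the missing justification.

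However, there is a genuine gap at the hinge between the two cases, namely your parenthetical claim that $M(J)\cap R=(0)$ ``is implied by $J\cap R=(0)$ combined with the case-(b) dichotomy.'' That implication is false, and case (b) gives no help here: case (b) is a statement about ideals with $J\cap R\neq(0)$ and says nothing about which maximal ideal an ideal with $J\cap R=(0)$ spawns. Concretely, $J=XD$ is principal, hence a $d$-ideal of finite type, it satisfies $J\cap R=(0)$, yet $X$ lies in no prime of the form $f(X)D$ with $f(0)=1$ (any divisor of $X$ in $L[X]$ with constant term $1$ is a unit), so $J$ is $d$-homog with $M(J)=M+XL[X]$ and $M(J)\cap R=M\neq(0)$. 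Worse, if $M$ is not principal (e.g.\ $R$ a two-dimensional regular local ring, precisely the setting of Example (a)), then $J=X(M+XL[X])=(xX,yX)D$ is a $d$-homog ideal with $J\cap R=(0)$ that is \emph{not} of the form $f(X)^{n}D$ for any $f$, since it is not principal. So your argument for case (a) only covers the $d$-homog ideals with $M(J)=f(X)D$, $f(0)=1$, and the remaining ideals --- those spawning $M+XL[X]$ but meeting $R$ trivially --- are covered by neither case. To be fair, this defect is inherited from the statement of the lemma itself: the paper's own discussion proves the dichotomy in terms of $M(J)\cap R$ being zero or not, and the lemma silently replaces that condition by $J\cap R$ being zero or not, which is strictly weaker. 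A correct version of the proof (and of the statement) should split on whether $M(J)=f(X)D$ or $M(J)=M+XL[X]$, and in the latter case allow ideals of the form $cX^{r}(A+XL[X])$ with $A$ an ideal of $R$ and $c\in L^{\times}$, $cA\subseteq R$, rather than only $(J\cap R)+XL[X]$.
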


The construction $D=R+XL[X]$ where $L$ is the quotient field of $R,$ will
deliver $D$ with $Cl_{d}(D)=0,$ when $Cl_{d}(R)=0.$ This is because in $%
D=R+XL[X]$ every finitely generated ideal is of the form $F=f(X)JD$ where $%
f(X)\in T$ and $J$ is a finitely generated ideal of $R$ \cite[Proposition
4.12]{CMZ}. So every invertible ideal of $D$ is principal if and only if
every invertible ideal of $R$ is principal. Thus if $R$ is quasi local then $%
D=R+XL[X]$ is a $d$-SH domain with $Cl_{d}=0.$

The above reasoning works in the $Cl_{t}(D)$ trivial or torsion cases too if
we look at it this way: If an ideal $G$ of $R+XL[X]$ is $t$-invertible then
there is a finitely generated ideal $F=f(X)JD$ of $D$ such that $%
G_{t_{D}}=(f(X)JD)_{t_{D}},$ where $t_{D}$ denotes the $t$-operation of the
domain $D.$ But by Lemma 4.41 of \cite{CMZ} $G_{t_{D}}=f(X)J_{t_{R}}D.$ So
every $t$-invertible $t$-ideal of $D$ is principal if and only if every $t$%
-invertible $t$-ideal of $R$ is principal. In other words, $Cl_{t}(D)=0$ $%
\Leftrightarrow $ $Cl_{t}(R)=0.$ We have seen that for a $t$-invertible
ideal $G\in R+XL[X]$ we have $G_{t_{D}}=f(X)J_{t_{R}}D.$ So, $%
(G^{n})_{t_{D}}=f(X)(J^{n})_{t_{R}}D.$ Thus $(G^{n})_{t_{D}}$ is principal
if and only if $(J^{n})_{t_{R}}$ is principal. Thus $Cl_{t}(D)$ is torsion $%
\Leftrightarrow $ $Cl_{t}(R)$ is torsion.

Next every $t$-local domain, i.e. a quasi local domain whose maximal ideal
is a $t$-ideal is an example of a $t$-wf-SH domain. This is because in a $t$%
-local domain $(D,M)$ every $t$-invertible ideal is invertible and hence
principal, \cite{ACZ}.

We shall see other examples as we define the $\ast$-homog ideals defining
the various clones of the $\ast$-wf-SH domains.

Example (b). Let $R$ be a $t$-local domain and let $D=R+XL[X]$ be as in
Lemma \ref{Lemma 4Ae1} then $D$ is an example of a $t$-wf-domain.

Illustration: Indeed by \ref{Lemma 4Ae1} a $t$-wf-homog ideal $J$ of $D$ is
either principal of the form $J=f(X)^{n}D,$ when $J\cap R=(0)$ or $%
J_{t}=(J\cap R)_{t}+XL[X],$ when $J\cap R\neq (0).$ Here $f(X)^{n}D$ is
principal (in fact a $t$-f-homog ideal) that satisfies the condition that if 
$t$-invertible then every $t$-invertible $t$-ideal containing it is
principal. Of course the ideal $J_{t}=(J\cap R)_{t}+XL[X]$ satisfies the
same condition because its being $t$-invertible or principal depends upon $%
(J\cap R)_{t}$ being $t$-invertible or principal which is a $t$-ideal of a $%
t $-local ring $R.$

\begin{definition}
\label{Definition 4Af} Call a $\ast$-homog ideal $I$ $\ast$-wf-homog of type 
$1$, if $I$ is a $\ast$-homog ideal of type $1$ such that whenever $I$ is a $%
\ast$- invertible $\ast$- ideal every $\ast$-invertible $\ast$-ideal $J$
containing $I$ is principal.
\end{definition}

\noindent As above we can call the generator of a principal $\ast$-wf-homog
ideal a $\ast$-wf-homog element and define a $\ast$-wf-SH domain of type $1$
as the domain whose nonzero non unit elements are expressible as products of 
$\ast$-wf-homog elements of type $1.$

\begin{proposition}
\label{Proposition 4Ag} A $\ast$-wf-SH domain of type $1$ is a $\ast$-weakly
Krull domain with trivial $\ast$-class group.
\end{proposition}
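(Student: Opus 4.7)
The plan is to decompose the claim into its two assertions and reduce each to a result already established in the paper, exploiting the fact that the definition of a $\ast$-wf-homog ideal of type $1$ combines two weaker definitions in parallel.

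First I would observe that every $\ast$-wf-homog ideal of type $1$ is, by Definition \ref{Definition 4Af}, both (i) a $\ast$-homog ideal of type $1$ in the sense of Definition \ref{Definition P1}, and (ii) a $\ast$-wf-homog ideal in the sense of Definition \ref{Definition 4Ac}. Consequently, if $D$ is a $\ast$-wf-SH domain of type $1$, then the factorization $x = d_1 d_2 \cdots d_r$ of any nonzero non-unit $x$ as a product of $\ast$-wf-homog elements of type $1$ simultaneously exhibits $xD$ as a $\ast$-product of $\ast$-homog ideals of type $1$ and as a product of $\ast$-wf-homog elements. Hence $D$ is at once a $\ast$-SH domain of type $1$ (Definition \ref{Definition P1}) and a $\ast$-wf-SH domain (Definition \ref{Definition 4Ad}).

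Next I would invoke Theorem \ref{Theorem P2} to conclude from the first reading that $D$ is a $\ast$-weakly Krull domain, i.e.\ a $\ast$-SHD whose maximal $\ast$-ideals are all of height $1$. Then I would invoke Proposition \ref{Proposition 4Ae} in the second reading to conclude that $D$ is a $\ast$-SH domain with $\mathrm{Cl}_{\ast}(D) = 0$. Combining the two readings gives a $\ast$-WKD with trivial $\ast$-class group, which is exactly the statement of the proposition.

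The only genuine content to check is the compatibility observation that the definition of ``$\ast$-wf-homog of type $1$'' literally subsumes both Definition \ref{Definition P1} and Definition \ref{Definition 4Ac}; this is visible at a glance from Definition \ref{Definition 4Af}, so there is no substantial obstacle. Everything else is a direct appeal to Theorem \ref{Theorem P2} and Proposition \ref{Proposition 4Ae}, both of which are already in hand.
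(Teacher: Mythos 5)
Your proof is correct and is exactly the definition-chase the paper intends (the paper's own ``proof'' is the single sentence ``The proof follows as we chase the definitions''): a $\ast$-wf-homog element of type $1$ generates an ideal that is simultaneously $\ast$-homog of type $1$ and $\ast$-wf-homog, so the two halves of the conclusion follow from Theorem \ref{Theorem P2} and Proposition \ref{Proposition 4Ae} respectively. Nothing further is needed.
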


The proof follows as we chase the definitions. Remarkable here is the
abundance of examples. Every one dimensional quasi local domain is indeed an
example of a $d$-wf-SH domain of type $1$ and so is every one dimensional
local domain with trivial ideal class group. A weakly Krull domain $D$ with
trivial $t$-class group is an example of a $t$-wf-SH domain of type $1.$ A
weakly Krull domain with zero $t$-class group is also known as a weakly
factorial domain and that, perhaps, is the reason for the abundance of
examples. Weakly factorial domains were among the earliest efforts to
generalize the notion of factoriality. These domains were initially defined
by Anderson and Mahaney in \cite{AM} as domains whose nonzero non units were
expressible as products of primary elements. Here an element $x$ of $D$ is
called primary if $xD$ is a primary ideal. Then it was shown, among other
results, in \cite{AZ}, that $D$ is a weakly factorial domain if and only if $%
D=\dbigcap\limits_{P\in X^{1}(D)}D_{P}$, the intersection is locally finite,
and $D$ has trivial $t$-class group, another way of saying that $D$ is a
weakly Krull domain with trivial $t$-class group. (At that time we did not
have the idea of christening the domains $D$ that are locally finite
intersections of localizations at height one primes as weakly Krull
domains.) To give the other properties, more important for the purposes of
that paper, it was shown that (a) $D$ is a weakly factorial domain if and
only if every convex directed subgroup of the group of divisibility of $D$
is a cardinal summand and (b) $D$ is a weakly factorial domain if and only
if the following is true: if $P$ is a prime ideal of $D$ minimal over a
proper principal ideal $xD$, then $P$ has height one and $xD_{P}$ $\cap D$
is principal. Indeed there has been a lot of activity around this concept.

On the other hand, as we come to consider the $\ast $-super homog ideals and 
$\ast $-super SH domains, things fall into the pattern of same old same old.
Just to make sure that the readers don't miss anything let's recall that the
definition of a $\ast $-super homog ideal $I$ requires that every $\ast $%
-ideal of finite type containing $I$ must be $\ast $-invertible and the
definition of a $\ast $-wf-homog ideal $I$ requires that if $I$ is $\ast $%
-invertible every $\ast $-ideal of finite type containing $I$ must be
principal. That is if $I$ is an ideal that is both $\ast $-super homog and $%
\ast $-wf-homog then every $\ast $-ideal of finite type containing $I$ is
principal. But that, in case $I$ is $\ast $-invertible, makes $I$ a $\ast $%
-f-ideal, as Definition \ref{Definition S} tells us. Conversely if $I$ is $%
\ast $-f-homog, then $I$ is obviously a $\ast $-super homog and a $\ast $%
-wf-homog ideal. This gives us the following result.

\begin{proposition}
\label{Proposition 4Ah} A $\ast $-ideal $I$ of finite type is $\ast $%
-f-homog if and only if $I$ is a $\ast $-super homog and a $\ast $-wf-homog
ideal.
\end{proposition}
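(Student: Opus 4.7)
The plan is to verify Proposition~\ref{Proposition 4Ah} by directly chasing the definitions. Since a $\ast$-f-homog ideal, a $\ast$-super homog ideal, and a $\ast$-wf-homog ideal are all required to be $\ast$-homog ideals of finite type, the issue reduces to comparing the additional finite-type containment conditions in each definition.

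For the forward direction, suppose $I$ is $\ast$-f-homog. Then every $\ast$-ideal of finite type containing $I$ is principal. Since principal implies $\ast$-invertible, condition~(1) of Definition~\ref{Definition Q} is satisfied and condition~(2) is just the $\ast$-homog condition, which I already have; hence $I$ is $\ast$-super homog. For the wf-homog property, note that if $I$ is $\ast$-invertible (and in fact it is, being principal) then any $\ast$-invertible $\ast$-ideal $J$ containing $I$ is of finite type (using that $\ast$ is of finite character), so by the $\ast$-f-homog hypothesis $J$ is principal; this is exactly Definition~\ref{Definition 4Ac}.

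For the converse, suppose $I$ is both $\ast$-super homog and $\ast$-wf-homog. Applying condition~(1) of the $\ast$-super homog definition to $J=I$ itself shows that $I$ is $\ast$-invertible. Once we know $I$ is $\ast$-invertible, Definition~\ref{Definition 4Ac} gives that every $\ast$-invertible $\ast$-ideal containing $I$ is principal. Now let $J$ be any $\ast$-ideal of finite type with $J \supseteq I$. By the $\ast$-super homog hypothesis $J$ is $\ast$-invertible, and hence by the $\ast$-wf-homog hypothesis $J$ is principal. So every $\ast$-ideal of finite type containing $I$ is principal, which with the $\ast$-homog property is precisely what Definition~\ref{Definition S} requires.

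Nothing about this argument is delicate; the only point worth flagging is the small but essential observation at the start of the converse that a $\ast$-super homog ideal $I$ is itself $\ast$-invertible (so that the hypothesis of the $\ast$-wf-homog condition is actually triggered). Without that step one could only conclude that finite-type ideals strictly between $I$ and $D$ are $\ast$-invertible, and the implication would stall. Once that is noted, the proof is a one-line application of each definition in turn.
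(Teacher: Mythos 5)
Your proof is correct and follows essentially the same definition-chasing route as the paper, which establishes the equivalence in the paragraph preceding the proposition (and records in Remark \ref{Z} the observation you flag, that a $\ast$-super homog ideal is itself $\ast$-invertible, so the hypothesis of the $\ast$-wf-homog condition is indeed triggered). Your write-up is, if anything, slightly more careful than the paper's about exactly where that $\ast$-invertibility of $I$ is used.
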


We already know that a domain whose nonzero non units are products of $\ast $%
-f-elements is a GCD\ $\ast$-IRKT. All that remains is making links with
other related concepts.

\begin{proposition}
\label{Proposition 4Aj} For a domain $D$ the following statements are
equivalent: (1) $D$ is a $\ast$-IRKT whose $\ast$-super homog ideals are
also $\ast$-wf-homog, (2) $D$ is a $\ast$-wf-SH domain whose $\ast$-wf-homog
ideals are also $\ast$-super homog, (3) $D$ is a GCD $\ast$-IRKT, (4) $D$ is
a $\ast $-IRKT with $Cl_{\ast}(D)=0$, (5) $D$ is a locally GCD $\ast$-IRKT
and $Cl_{d}(D)=0.$
\end{proposition}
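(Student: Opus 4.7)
The plan is to use Proposition \ref{Proposition 4Ah} as the pivot: a $\ast$-ideal is $\ast$-f-homog precisely when it is simultaneously $\ast$-super homog and $\ast$-wf-homog. With this identification, conditions (1) and (2) each amount to saying that the ideals in the canonical $\ast$-factorization of a principal $xD$ are actually $\ast$-f-homog, which by Theorem \ref{Theorem S2} is exactly the characterization of a GCD $\ast$-IRKT. This will give (1) $\Leftrightarrow$ (2) $\Leftrightarrow$ (3); then (3) $\Leftrightarrow$ (4) will follow from the class-group behaviour of P$\ast$MDs together with Corollary \ref{Corollary Q7}, while the equivalence with (5) will require a local-to-global argument.

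For (1) $\Rightarrow$ (3), I first invoke Proposition \ref{Proposition Q3}, so $D$ being a $\ast$-IRKT means every nonzero non-unit $x$ satisfies $xD = (I_{1}\cdots I_{n})^{\ast}$ with each $I_{i}$ a $\ast$-super homog ideal. Under (1) these $I_{i}$ are also $\ast$-wf-homog, hence $\ast$-f-homog by Proposition \ref{Proposition 4Ah}, and Theorem \ref{Theorem S2} concludes that $D$ is a GCD $\ast$-IRKT. For (3) $\Rightarrow$ (1), a GCD $\ast$-IRKT is in particular a GCD domain, so Lemma \ref{Lemma S2a} makes every $\ast$-homog ideal (and hence every $\ast$-super homog ideal) a $\ast$-f-homog ideal, which by Proposition \ref{Proposition 4Ah} is automatically $\ast$-wf-homog. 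The equivalence (2) $\Leftrightarrow$ (3) is parallel, starting instead from the definition of a $\ast$-wf-SH domain and using the same pivot.

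For (3) $\Leftrightarrow$ (4): a GCD domain has $Cl_{t}(D) = 0$, and since a $\ast$-IRKT is a P$\ast$MD (so $\ast = t$ on finitely generated ideals), the class-group inclusion $\ast_{1} \leq \ast_{2} \Rightarrow Cl_{\ast_{1}}(D) \subseteq Cl_{\ast_{2}}(D)$ recalled in the introduction yields $Cl_{\ast}(D) = 0$. Conversely, if $D$ is a $\ast$-IRKT with $Cl_{\ast}(D) = 0$, then Corollary \ref{Corollary Q7}(5) says each $(a, b)$ with $a, b \in D \setminus \{0\}$ is $\ast$-invertible, so $(a, b)^{\ast}$ is principal; since $\ast = t$ in the P$\ast$MD this is $(a, b)_{v}$ principal, and $D$ is GCD.

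The direction (3) $\Rightarrow$ (5) is immediate, since a GCD domain is locally GCD and $Cl_{d}(D) \subseteq Cl_{\ast}(D) = 0$. The hard part, and the main obstacle in the whole proposition, is (5) $\Rightarrow$ (4): promoting $Cl_{d}(D) = 0$ to $Cl_{\ast}(D) = 0$. My plan is to show that in a locally GCD $\ast$-IRKT every $\ast$-invertible $\ast$-ideal $I$ is in fact $d$-invertible: $I$ is finitely generated (being of finite type), and $ID_{P}$ is principal at each maximal $\ast$-ideal $P$ since $D$ is a P$\ast$MD, while the locally GCD hypothesis combined with the fact from Theorem \ref{Theorem M} that distinct maximal $\ast$-ideals share no nonzero prime should force $ID_{M}$ to be principal at every remaining maximal ideal $M$. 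This identifies $Inv_{\ast}(D)$ with $Inv_{d}(D)$, so $Cl_{\ast}(D) = Cl_{d}(D) = 0$, and (4) follows. The subtle step is precisely this extension of local principality from maximal $\ast$-ideals to all maximal ideals, where the independence-of-primes feature of $\ast$-IRKTs is expected to do the heavy lifting.
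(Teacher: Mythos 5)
Your handling of (1), (2), (3), (4) is sound and close to the paper's: the pivot through Proposition \ref{Proposition 4Ah} is exactly what the paper does for (1) $\Leftrightarrow$ (3) and (1) $\Rightarrow$ (2), and your (4) $\Rightarrow$ (3) via Corollary \ref{Corollary Q7}(5) is a harmless variant of the paper's route (which instead notes that $Cl_{\ast}(D)=0$ makes every $\ast$-super homog ideal principal, hence $\ast$-f-homog, and then applies Theorem \ref{Theorem S2}). The problem is (5) $\Rightarrow$ (4), which you correctly identify as the hard step but for which your sketch does not actually work.

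The claim that ``the locally GCD hypothesis combined with independence of maximal $\ast$-ideals should force $ID_{M}$ to be principal at every remaining maximal ideal $M$'' is unsupported, and I do not see how to support it for an arbitrary $\ast$-invertible $\ast$-ideal $I$. Being a finite-type ideal of a GCD domain $D_{M}$ does not make $ID_{M}$ principal (think of $(x,y)$ in $k[x,y]_{(x,y)}$), and the $\ast$-structure of $I$ only controls its localizations at maximal $\ast$-ideals; at a maximal ideal $M$ that is not a $\ast$-ideal you have no handle on $ID_{M}$, and the independence condition concerns pairs of maximal $\ast$-ideals, not the relation between a maximal ideal and the maximal $\ast$-ideals it contains. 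Moreover $I$ need only be of finite type, not finitely generated, so even granting local principality everywhere you would still owe an argument that $I$ is invertible. The paper avoids all of this by not working with general $\ast$-invertible ideals: in Lemma \ref{Lemma 4Ak} it takes the specific ideals $J=aD\cap bD$, which satisfy $JD_{M}=aD_{M}\cap bD_{M}$ at \emph{every} maximal ideal $M$ and are therefore locally principal precisely because each $D_{M}$ is GCD; it then uses the locally finite defining family to exhibit $J$ as a $v$-ideal of finite type, invokes the fact that a flat $v$-ideal of finite type is invertible (\cite{Z90}), and concludes from $Cl_{d}(D)=0$ that $J$ is principal, i.e.\ $D$ is GCD, giving (5) $\Rightarrow$ (3) directly. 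To repair your argument, either restrict your local-to-global step to the ideals $aD\cap bD$ (at which point you are reconstructing Lemma \ref{Lemma 4Ak}), or supply a genuine proof that every $\ast$-invertible $\ast$-ideal of a locally GCD $\ast$-IRKT is invertible --- which, as far as I can see, is most naturally obtained only \emph{after} one knows $D$ is GCD.
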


\begin{proof}
(1) $\Leftrightarrow $ (3) By Proposition \ref{Proposition 4Ah} and Theorem %
\ref{Theorem S2}, (1) $\Rightarrow $ (2) A $\ast $-IRKT is a $\ast $-SH
domain. Now apply Proposition \ref{Proposition 4Ah} (2) $\Rightarrow $ (4) A 
$\ast $-wf-SH domain has trivial $\ast $-class group by Proposition \ref%
{Proposition 4Ae} and every $\ast $-wf-homog ideal being $\ast $-super homog
makes $D$ a $\ast $-IRKT, (4) $\Rightarrow $ (3) Note that if $D$ is a $\ast 
$-IRKT with $Cl_{\ast }(D)=0$, then every $\ast $-invertible $\ast $-ideal
of $D$ is principal and so every $\ast $-super homog ideal of $D$ is
principal. Thus every $\ast $-super homog ideal of $D$ is $\ast $-f-homog.
Now apply Theorem \ref{Theorem S2}, (3) $\Rightarrow $ (5) $D$ being a GCD
domain implies that $D$ is locally GCD and $Cl_{t}(D)=0$ and we know that $%
Cl_{t}(D)\supseteq Cl_{d}(D)$, (5) $\Rightarrow $ (3) Let's prove the
following result.
\end{proof}

\begin{lemma}
\label{Lemma 4Ak} Let $D$ be a locally GCD domain and let $\mathcal{F}$ be a
family of nonzero primes of $D$ such that $D=\dbigcap \limits_{P\in\mathcal{F%
}}D_{P}$ is locally finite. If $Cl_{d}(D)=0$ then $D$ is a GCD domain.
\end{lemma}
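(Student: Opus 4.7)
My plan is to show that for any two nonzero elements $a,b\in D$, the ideal $I:=aD\cap bD$ is principal; from this it follows that the generator $\ell$ is an lcm of $a$ and $b$, hence $\gcd(a,b)=ab/\ell\in D$, which makes $D$ a GCD domain.

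First, since $D$ is locally GCD, the localization $D_M$ is a GCD domain for every maximal ideal $M$ of $D$, and hence $ID_M=aD_M\cap bD_M=\ell_M D_M$ is principal. Thus $I$ is locally principal at every maximal ideal. Second, by the locally finite character of $\mathcal{F}$, only finitely many primes $P_1,\ldots,P_n\in\mathcal{F}$ contain either $a$ or $b$; for every other $P\in\mathcal{F}$ the ideal $ID_P$ equals $D_P$. Each $D_{P_i}$ is itself a GCD domain (as a localization of a locally GCD domain), so $ID_{P_i}=\ell_i D_{P_i}$ is principal, and after clearing denominators we may choose $\ell_i\in I$. A patching argument over the finitely many primes $P_1,\ldots,P_n$, exploiting that $D=\bigcap_{P\in\mathcal{F}}D_P$ is locally finite, then yields that $I$ is finitely generated as a $D$-module. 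Combining these facts, $I$ is locally principal and finitely generated, hence invertible. The hypothesis $Cl_d(D)=0$ therefore forces $I$ to be principal.

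The main obstacle is the finite-generation step. Local principality at every maximal ideal is immediate from the locally GCD hypothesis, but the passage to a global finite generating set is delicate: the naive candidate $\sum_{i=1}^n \ell_i D$ can fall strictly short of $I$, as is visible in $D=k[x,y]$ with $a=x$, $b=y$, where the local lcms at the height-one primes $(x)$ and $(y)$ are $x$ and $y$, and their $D$-sum is $(x,y)\subsetneq xyD=I$. A correct patching must choose each $\ell_i$ so that it lies in $aD\cap bD$ and is simultaneously a unit in $D_{P_j}$ for $j\ne i$, which is possible when the $P_i$ are pairwise incomparable (as when $\mathcal{F}$ consists of maximal $\ast$-ideals or height-one primes); alternatively, one may route through the $\ast_\mathcal{F}$-closure and invoke finite-character properties of that star operation to transfer the local finiteness to global finite generation.
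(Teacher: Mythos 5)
Your overall scaffolding (reduce to showing $I=aD\cap bD$ is invertible, then use $Cl_d(D)=0$ to get principality) matches the paper, but the step you yourself flag as the main obstacle --- finite generation of $I$ --- is a genuine gap, and neither of your proposed repairs closes it. The first repair asks for $\ell_i\in aD\cap bD$ that is a unit in $D_{P_j}$ for $j\neq i$; this is impossible whenever $n\geq 2$, because each $P_j$ contains $a$ or $b$, and every element of $aD\cap bD$ lies in $aD$ and in $bD$, hence in every such $P_j$. In your own illustration $D=k[x,y]$, $a=x$, $b=y$, $P_1=(x)$, $P_2=(y)$: every element of $xD\cap yD=xyD$ is divisible by $y$ and so is a non-unit in $D_{(y)}$, so no admissible $\ell_1$ exists. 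The second repair (routing through the $\ast_{\mathcal{F}}$-closure) only yields that $I=A^{\ast_{\mathcal{F}}}$ for some finitely generated $A\subseteq I$, which is strictly weaker than $I$ being finitely generated as an ideal, and ``locally principal plus $\ast_{\mathcal{F}}$-finite'' does not by itself give invertibility. So the chain ``locally principal $+$ finitely generated $\Rightarrow$ invertible'' is never legitimately entered.

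The paper threads this needle differently, and the difference is essential. It sets $A=(ab,x_1,\dots,x_n)$ with $x_i\in aD\cap bD$ generating $(aD\cap bD)D_{P_i}$; the extra generator $ab$ forces $AD_Q=D_Q=(aD\cap bD)D_Q$ at every $Q\in\mathcal{F}$ avoiding $a$ and $b$, so $AD_P=(aD\cap bD)D_P$ for all $P\in\mathcal{F}$. Since $aD\cap bD$ is divisorial and contains $A$, this gives $aD\cap bD=A_v$, i.e.\ $aD\cap bD$ is a $v$-ideal of \emph{finite type} --- not a finitely generated ideal. The decisive extra ingredient is then the theorem that a flat ideal which is a $v$-ideal of finite type is invertible \cite[Proposition 1]{Z90}, flatness coming from local principality. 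Only after invertibility is secured this way does $Cl_d(D)=0$ give principality. To complete your argument you should abandon the direct finite-generation claim and substitute (i) the identity $aD\cap bD=(ab,x_1,\dots,x_n)_v$ and (ii) the cited flat-plus-$v$-finite-implies-invertible result.
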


\begin{proof}
Let $\tau $ be the star operation induced by $\{D_{P}\}_{P\in \mathcal{F}}.$
Since $D$ is locally GCD, each of $D_{P}$ is a GCD domain and so for each
pair $a,b$ of non zero elements of $D$ we have $(aD\cap bD)D_{P}=aD_{P}\cap
bD_{P}$ principal. Because $D=\dbigcap\limits_{P\in \mathcal{F}}D_{P}$ is
locally finite, $aD\cap bD$ is contained in at most a finite number $%
P_{1},P_{2},...,P_{n}$ of members of $\mathcal{F}$, precisely ones that
contain at least one of $a,b.$ That is $(aD\cap bD)D_{Q}=D_{Q}$ for all $%
Q\in \mathcal{F}$ such that $ab\notin Q.$ Then $(aD\cap
bD)D_{P_{i}}=x_{i}D_{P_{i}}$ where we can take $x_{i}\in P_{i}$ for some $i$
and indeed we can take $x_{i}\in aD\cap bD$, $i=1,2,...n.$ Set $%
A=(ab,x_{1},...x_{n}).$ Then $A_{v}\subseteq aD\cap bD$ because $A\subseteq
aD\cap bD$ which is a $v$-ideal. Now $(aD\cap
bD)D_{P_{i}}=x_{i}D_{P_{i}}\subseteq AD_{P_{i}}$ for $i=1,...,n$ and $%
(aD\cap bD)D_{Q}=D_{Q}=AD_{Q}$ for all $Q\in \mathcal{F}$ such that $%
ab\notin Q.$ So $(aD\cap bD)D_{P}\subseteq AD_{P}$ for all $P\in \mathcal{F}$%
. Thus $(aD\cap bD)\subseteq A_{t}\subseteq A_{v}$ and this shows that $%
(aD\cap bD)$ is a $v$-ideal of finite type. Now as $D$ is locally GCD $%
(aD\cap bD)$ is locally principal and hence flat. But a flat ideal that is
also a $v$-ideal of finite type is invertible \cite[Proposition 1]{Z90}. Now
for each pair $a,b\in D\backslash \{0\},$ $(aD\cap bD)$ is invertible and $%
Cl_{d}(D)=0$ means every invertible ideal of $D$ is principal. Whence for
each pair $a,b\in D\backslash \{0\}$ $(aD\cap bD)$ is principal and $D$ is a
GCD domain.
\end{proof}

Next from Proposition \ref{Proposition 4Ah}, we conclude that a $\ast $%
-f-homog ideal of type $1$ is nothing but a $\ast $-super homog ideal of
type $1$ that is also a $\ast $-wf-homog ideal. Again we know that a domain
whose nonzero non units are expressible as products of $\ast $-f- elements
of type $1$ is a GCD-$\ast $-GKD (cf Theorem \ref{Theorem S3}) and that
these domains were studied in \cite{AAZ1} as GUFDs with a totally different
set of definitions. We also know that only two values of $\ast ,d$ and $t$,
have any effect. That is a GUFD $D$ is a one dimensional Bezout domain if $%
\ast =d$ and a GCD-GKD if $\ast =t.$

\bigskip Let's call a $\ast $-homog ideal $I$ a weak $\ast $-almost
factorial homog ($\ast $-waf-homog) ideal if whenever $I$ is a $\ast $%
-invertible $\ast $-ideal for every $\ast $-invertible $\ast $-ideal $J$
that contains $I$ there is a positive integer $j$ such that $(J^{j})^{\ast }$
is principal.

It is easy to see that the product $IJ$ of two similar $\ast $-waf-homog
ideals $I,J$ is a $\ast $-waf-homog ideal similar to both $I$ and $J.$ ($I$
and $J$ are $\ast $-waf-homog, so $(IJ)^{\ast }$ is $\ast $-homog. Next, if $%
IJ$ is $\ast $-invertible then both $I$ and $J$ are $\ast $-invertible and
so $\ast $-af-homog, by the remark after Definition \ref{Definition 4Ac}
making $(IJ)^{\ast }$ a $\ast $-waf-homog ideal.) Consequently a product of
finitely many $\ast $-waf-homog ideals is a expressible, uniquely, up to
order, as a product of mutually $\ast $-comaximal $\ast $-waf-homog ideals.

Let's start with a clone of Corollary \ref{Corollary 4Ab}.

\begin{proposition}
\label{Proposition 4Ba} Let $D$ be a $\ast$-SH domain. Then the $\ast$-class
group of $D$ is torsion if and only if for every $\ast$-invertible $\ast $%
-homog ideal $I$ of $D$ we have $(I^{r})^{\ast}$ principal for some $r.$
\end{proposition}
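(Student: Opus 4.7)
The plan is to adapt the argument of Corollary \ref{Corollary 4Ab} and Proposition \ref{Proposition 4Ae}, replacing ``principal'' throughout by ``has some $\ast$-power that is principal''. The structural tool is the unique $\ast$-comaximal $\ast$-homog decomposition of a $\ast$-invertible $\ast$-ideal supplied by Theorem \ref{Theorem 4Aa}.

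For the forward implication I would argue directly: if $Cl_{\ast}(D)$ is torsion and $I$ is a $\ast$-invertible $\ast$-homog ideal, then its class $[I]\in Cl_{\ast}(D)$ has some finite order $r$, which is literally the assertion that $(I^{r})^{\ast}$ is principal.

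For the converse, assume every $\ast$-invertible $\ast$-homog ideal has a $\ast$-power that is principal, and let $A$ be an arbitrary $\ast$-invertible $\ast$-ideal of $D$. Theorem \ref{Theorem 4Aa} writes $A=(I_{1}I_{2}\cdots I_{n})^{\ast}$ with the $I_{i}$ mutually $\ast$-comaximal $\ast$-homog ideals. First I would verify that each $I_{i}$ is itself $\ast$-invertible: setting $B_{i}=(I_{1}\cdots I_{i-1}I_{i+1}\cdots I_{n})^{\ast}$, one has $(I_{i}B_{i})^{\ast}=A$, so
\[
\bigl(I_{i}\cdot(B_{i}A^{-1})\bigr)^{\ast}=\bigl((I_{i}B_{i})A^{-1}\bigr)^{\ast}=(AA^{-1})^{\ast}=D,
\]
exhibiting $B_{i}A^{-1}$ as a $\ast$-inverse of $I_{i}$. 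By hypothesis there are positive integers $r_{1},\dots,r_{n}$ with each $(I_{i}^{r_{i}})^{\ast}=c_{i}D$ principal. Taking $r$ to be any common multiple of the $r_{i}$ (say $r=r_{1}r_{2}\cdots r_{n}$) makes each $(I_{i}^{r})^{\ast}$ principal, and then
\[
(A^{r})^{\ast}=(I_{1}^{r}I_{2}^{r}\cdots I_{n}^{r})^{\ast}=\bigl((I_{1}^{r})^{\ast}(I_{2}^{r})^{\ast}\cdots(I_{n}^{r})^{\ast}\bigr)^{\ast}
\]
is the $\ast$-closure of a product of principal ideals, hence principal. Therefore $[A]$ has finite order in $Cl_{\ast}(D)$, and $Cl_{\ast}(D)$ is torsion.

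The only step needing even a moment's care is recovering $\ast$-invertibility of each $I_{i}$ from $\ast$-invertibility of $A$, and as shown above this is a direct consequence of the $\ast$-product identity $(IJ^{\ast})^{\ast}=(IJ)^{\ast}$; no genuine obstacle is anticipated, since the proposition is essentially a rereading of Theorem \ref{Theorem 4Aa} through the lens of the $\ast$-class group.
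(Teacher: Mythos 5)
Your argument is correct and is exactly the route the paper intends: the paper states Proposition \ref{Proposition 4Ba} without proof as a ``clone'' of Corollary \ref{Corollary 4Ab}, resting on the decomposition of Theorem \ref{Theorem 4Aa}, and the same least-common-multiple-of-exponents computation you use for the converse appears verbatim in the paper's proof of Proposition \ref{Proposition 4Bc}. Your explicit check that each $\ast$-comaximal $\ast$-homog factor $I_{i}$ inherits $\ast$-invertibility from $A$ is a worthwhile detail the paper leaves tacit.
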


Next we have a clone of the definition of weakly factorial domains.

\begin{definition}
\label{Definition 4Bb} Call an integral domain $D$ a $\ast $-SH weakly
almost factorial domain ($\ast $-waf-SH) if every nonzero non unit of $D$ is
expressible as a finite $\ast $-product of $\ast $-waf-homog ideals.
\end{definition}

\begin{proposition}
\label{Proposition 4Bc}A $\ast$-SH domain with torsion $\ast$-class group is
a $\ast$-waf-SH domain. Conversely a $\ast$-waf-SH domain is a $\ast$-SH
domain with torsion $\ast$-class group.
\end{proposition}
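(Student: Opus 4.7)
The plan is to mirror the proof of Proposition \ref{Proposition 4Ae}, substituting Proposition \ref{Proposition 4Ba} (the torsion-$\ast$-class-group criterion) for Corollary \ref{Corollary 4Ab} and using Theorem \ref{Theorem 4Aa} as the tool that decomposes an arbitrary $\ast$-invertible $\ast$-ideal into $\ast$-comaximal $\ast$-homog pieces.

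For the forward direction, suppose $D$ is a $\ast$-SH domain with torsion $\ast$-class group. Given a nonzero non unit $x$, Corollary \ref{Corollary L} writes $xD=(I_{1}\cdots I_{r})^{\ast}$ with the $I_{i}$ mutually $\ast$-comaximal $\ast$-invertible $\ast$-homog ideals. I would verify that each $I_{i}$ is in fact $\ast$-waf-homog: if $J$ is any $\ast$-invertible $\ast$-ideal containing $I_{i}$, then $J$ is $\ast$-invertible, hence by Proposition \ref{Proposition 4Ba} some $(J^{j})^{\ast}$ is principal, which is exactly the defining condition of $\ast$-waf-homog. Thus $xD$ is a finite $\ast$-product of $\ast$-waf-homog ideals and $D$ is a $\ast$-waf-SH domain.

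For the converse, first observe that since every $\ast$-waf-homog ideal is $\ast$-homog by definition, a $\ast$-waf-SH domain is trivially a $\ast$-SH domain. It remains to show that $Cl_{\ast}(D)$ is torsion. By Proposition \ref{Proposition 4Ba} it suffices to show that for each $\ast$-invertible $\ast$-homog ideal $I$ of $D$ there is a positive integer $r$ with $(I^{r})^{\ast}$ principal. Pick $0\neq x\in I$ and write $xD=(J_{1}\cdots J_{m})^{\ast}$ as a $\ast$-product of $\ast$-waf-homog ideals; arguing as in Proposition \ref{Proposition K} (which applies because $\ast$-waf-homog ideals are $\ast$-homog), I may assume the $J_{\ell}$ are mutually $\ast$-comaximal, so each is $\ast$-invertible and satisfies $J_{\ell}=xD_{M(J_{\ell})}\cap D$. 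Since $x\in I\subseteq M(I)$, exactly one factor, say $J_{k}$, lies in $M(I)$; then $J_{k}=xD_{M(I)}\cap D\subseteq ID_{M(I)}\cap D=I$ by Proposition \ref{Proposition G}. Because $J_{k}$ is a $\ast$-invertible $\ast$-waf-homog ideal and $I\supseteq J_{k}$ is a $\ast$-invertible $\ast$-ideal, the definition of $\ast$-waf-homog forces $(I^{r})^{\ast}$ to be principal for some positive integer $r$. Applying this to a general $\ast$-invertible $\ast$-ideal $A$ via Theorem \ref{Theorem 4Aa}, which decomposes $A=(I_{1}\cdots I_{n})^{\ast}$ into mutually $\ast$-comaximal $\ast$-homog pieces $I_{i}=AD_{P_{i}}\cap D$ (each $\ast$-invertible since $A$ is), and taking $s$ to be the lcm of the exponents $r_{i}$ obtained for each $I_{i}$, we conclude $(A^{s})^{\ast}=(\prod I_{i}^{s})^{\ast}$ is principal.

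The main obstacle is the converse: the delicate point is locating the correct $\ast$-waf-homog factor $J_{k}$ among the decomposition of $xD$ and proving the containment $J_{k}\subseteq I$. Once Proposition \ref{Proposition G} is invoked to secure this containment, the definition of $\ast$-waf-homog takes over and delivers a principal power of $I$. The other steps, namely the regrouping of $\ast$-waf-homog factors into $\ast$-comaximal ones and the lcm-of-exponents trick to pass from each $I_{i}$ back to $A$, are straightforward clones of the analogous steps in the proofs of Proposition \ref{Proposition K} and Proposition \ref{Proposition 4Ae}.
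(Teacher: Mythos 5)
Your proposal is correct and follows essentially the same route as the paper: the forward direction turns torsion of the class group into principal powers for every $\ast$-invertible $\ast$-ideal containing a factor of $xD$, and the converse locates a $\ast$-waf-homog factor of $xD$ inside each $\ast$-homog piece of an arbitrary $\ast$-invertible $\ast$-ideal (decomposed via Theorem \ref{Theorem 4Aa}) and finishes with the lcm-of-exponents trick, exactly as the paper does. The only cosmetic difference is that in the forward direction the needed fact--that any $\ast$-invertible $\ast$-ideal $J\supseteq I_{i}$ has a principal $\ast$-power--follows directly from the definition of torsion $\ast$-class group rather than from Proposition \ref{Proposition 4Ba}, which as stated concerns only $\ast$-homog ideals.
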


\begin{proof}
Indeed if the $\ast $-class group of $D$ is torsion, for every $\ast $%
-invertible $\ast $-ideal $I$ of $D$ there is a positive integer $n$ such
that $(I^{n})^{\ast }$ is principal. Now for every nonzero non unit $x$ in a 
$\ast $-SH domain $xD=(I_{1}I_{2}...I_{n})^{\ast },$ where $I_{i}$ are
mutually $\ast $-comaximal $\ast $-invertible $\ast $-ideals. With the added
restriction of torsion $\ast $-class group, each of $I_{i}$ is a $\ast $%
-invertible $\ast $-ideal which fits the definition of a $\ast $-waf-homog
ideal, i.e., $I_{i}$ is such that for every $\ast $-invertible $\ast $-ideal
say $J_{i}$ there is a positive integer $n_{i}$ such that $(J_{i})^{n_{i}}$
is principal, provided amply by the fact that the $\ast $-class group of $D$
is torsion. Conversely, it is obvious that (a) $D$ is of finite $\ast $%
-character and (b) every prime ideal of $D$ contains a $\ast $-waf-homog
ideal. It is now easy to show that every maximal $\ast $-ideal of $D$ is
spawned by a $\ast $-waf-homog ideal, because a $\ast $-waf-homog ideal is a 
$\ast $-homog ideal to start with, and that no two distinct maximal $\ast $%
-ideals contain a nonzero prime ideal. So, a $\ast $-waf-SH domain $D$ is a $%
\ast $-SH domain. Next to show that the $\ast $-class group of $D$ is
torsion, take an integral $\ast $-invertible $\ast $-ideal $I$ of $D$ and
let $0\neq x\in I.$ By Theorem \ref{Theorem 4Aa}, $I=(I_{1}I_{2}...I_{n})^{%
\ast }$ where each of $I_{i}$ is a $\ast $-homog ideal and $I_{i}$ mutually $%
\ast $-comaximal. Pick one, say $I_{k},$ and note that $%
xD=(K_{1}K_{2}...K_{r})^{\ast }\in I_{k}$ where $K_{i}$ are, mutually $\ast $%
--comaximal $\ast $-waf-homog ideals$.$ Since for each $i,K_{i}$ is $\ast $%
-homog and since $K_{i}$ are mutually $\ast $-comaximal, by Corollary \ref%
{Corollary H} only one of the $K_{i}$ is contained in $I_{k}.$ Now $K_{i}$
being a $\ast $-waf-homog ideal, $K_{i}$ (is $\ast $-invertible and) has the
property that for any $\ast $-invertible $\ast $-ideal $L$ containing $K_{i}$
there is a positive integer $t$ such that $(L^{t})^{\ast }$ is principal, by
our definition. Thus $(I_{k}^{m_{k}})^{\ast }$ is principal for some
positive integer $m_{k}.$ Now let $m=\func{lcm}(m_{1},m_{2},...,m_{n}),$
then $(I^{m})^{\ast }=((I_{1}^{m_{1}})^{(\frac{m}{m1})}(I_{2}^{m2})^{(\frac{m%
}{m2})}...(I_{n}^{m_{1}})^{(\frac{m}{mn})})^{\ast }$ is principal. Finally
as $I$ was arbitrary, we conclude that for every integral $\ast $-invertible 
$\ast $-ideal $J$ of $D,$ $(J^{m})^{\ast }$ is principal for some positive
integer $m$ and this, indeed, forces $Cl_{\ast }(D)$ to be torsion.
\end{proof}

Examples: (c) (When $\ast=d$ and no restriction on dimension). Let $(R,M)$
be a quasi local almost factorial domain, let $L$ be the quotient field of $%
R $ and let $X$ be an indeterminate over $L.$ Then the ring $D=R+XL[X]$ is
an $n+1$ dimensional $d$-SH domain such that $Cl_{d}(D),$ the ideal class
group of $D$ is zero but $Cl_{t}(D)$ is torsion, because all $d$-homog
ideals are either principal or of the form $f(X)(A+XL[X])$ where $A$ is an
ideal of $R$ and $((A+XL[X])^{r})_{t}=((A^{r})_{t}+XL[X]).$ Thus if for
every finitely generated ideal $A$ of $R$ we have a positive integer $r$
such that $(A^{r})_{t}$ is principal the corresponding ideals of the form $%
f(X)(A+XL[X]) $ have the same property.

Of course Example (c) cannot be used as an example of a $t$-waf-SH domain,
if $R$ is not $t$-local. For if $R$ has say maximal $t$-ideals $M$ and $N$
then $D$ has two corresponding maximal $t$-ideals $M+XL[X]$ and $N+XL[X],$
ensuring that $D$ is not a $t$-SH domain. Meaning that for $D$ to be a $t$%
-SH domain $R$ has to be a domain with a unique maximal $t$-ideal. But such
a domain will have to be $t$-local. Indeed if $R$ has a unique maximal $t$%
-ideal $N$ then every nonzero non unit of $R$ would be contained in $N$ and
that forces every maximal ideal of $R$ contained in $N.$ But $R$ being $t$%
-local means that $Cl_{t}(D)$ is zero, slightly more than torsion.

The reason is the following result.

\begin{proposition}
\label{Proposition 4Bd} Let $(D,M)$ be a $t$-local domain. Then (a) if $A$
is a $t$-invertible ideal of $D$ then $A$ is principal and (b) If $A$ is an
ideal of $D$ such that $(A^{n})_{t}=D$ for some positive integer $n,$ then $%
A $ is principal.
\end{proposition}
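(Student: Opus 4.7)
The plan is to prove (a) by collapsing $t$-invertibility to honest invertibility using the hypothesis that the maximal ideal $M$ is a $t$-ideal, and then to invoke the standard fact that an invertible ideal in a quasi-local domain is principal. For (b), the idea is to show that the assumption $(A^{n})_{t}=D$ already forces $A$ to be $t$-invertible, at which point part (a) finishes the job.

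For (a), I would set $B=AA^{-1}$, observe that $B$ is an integral ideal of $D$ with $B_{t}=D$, and then use $t$-locality to rule out $B\subsetneq D$: if $B$ were proper it would lie in the unique maximal ideal $M$, and since $M$ is a $t$-ideal we would get $B_{t}\subseteq M_{t}=M\neq D$, contradicting $B_{t}=D$. Hence $AA^{-1}=D$, so $A$ is invertible, and over the quasi-local ring $D$ this forces $A$ to be principal (write $\sum a_{i}b_{i}=1$ with $a_{i}\in A$ and $b_{i}\in A^{-1}$; some summand must be a unit, and the corresponding $a_{i}$ generates $A$). This is essentially \cite[Proposition 1.12]{ACZ}, already cited in the paper.

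For (b), first note that $(A^{n})_{t}=D$ forces $A^{n}\subseteq D$. From $A\cdot A^{n-1}=A^{n}\subseteq D$ we obtain $A^{n-1}\subseteq (D:A)=A^{-1}$, whence $AA^{-1}\supseteq A^{n}$; applying the $t$-closure yields $(AA^{-1})_{t}\supseteq (A^{n})_{t}=D$, so $(AA^{-1})_{t}=D$ and $A$ is $t$-invertible. Part (a) then delivers that $A$ is principal. I do not expect a real obstacle here: the only delicate point is unpacking $t$-locality correctly (namely that $D$ is quasi-local and that ``contained in $M$'' is preserved by the $t$-closure because $M$ is a $t$-ideal), and the short chain $A\cdot A^{n-1}\subseteq AA^{-1}\subseteq D$ used in part (b), both of which are formal once the definitions are in hand.
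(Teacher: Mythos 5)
Your proof is correct and follows essentially the same route as the paper: for (a) you deduce $AA^{-1}=D$ from the fact that a proper ideal of the quasi-local domain $D$ lies in $M$, which is a $t$-ideal, and then use that an invertible ideal of a quasi-local domain is principal; for (b) you reduce to (a) via $t$-invertibility. The only difference is that you supply the short computation ($A^{n-1}\subseteq A^{-1}$, hence $(AA^{-1})_{t}\supseteq (A^{n})_{t}=D$) showing that $(A^{n})_{t}=D$ forces $A$ to be $t$-invertible, a step the paper asserts without detail.
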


\begin{proof}
(a) If $A$ is $t$-invertible, then $AA^{-1}$ is not contained in any maximal 
$t$-ideal and so $AA^{-1}$ is not contained in $M.$ But then $AA^{-1}$ is
not contained in any maximal ideal of $D,$ because $M$ is the only maximal
ideal of $D.$ Hence. $AA^{-1}=D.$ But then $A$ is invertible in a quasi
local domain and hence principal, (b) if $(A^{n})_{t}=D$ then $A$ is $t$%
-invertible and so, by (a) above, is principal.
\end{proof}

\begin{remark}
\label{Remark 4Be} Part (a) of Proposition \ref{Proposition 4Bd} is \cite[%
Proposition 1.12]{ACZ} but the proof there is not quite clear. Here we have
made the necessary clarifications. Thus even if $(D,M)$ is an almost
valuation domain, i.e. for each pair of nonzero elements $x,y$ there is a
positive integer $n$ such that $x^{n}|y^{n}or$ , $y^{n}|x^{n},$ $%
Cl_{t}(D)=(0).$
\end{remark}

This gives us Example (b) all over again. That means $D=R+XL[X]$ is not of
much use in this context.

Indeed if, on the other hand, we consider the $\ast $-super homog ideals and 
$\ast $-super SH domains things fall into the realm of what we already know.
Let's recall that the definition of a $\ast $-super homog ideal $I$ requires
that every $\ast $-ideal of finite type containing $I$ must be $\ast $%
-invertible and the definition of a $\ast $-waf-homog ideal $I$ requires
that if $I$ is $\ast $-invertible then for every $\ast $-ideal of finite
type $J$ containing $I$ there is a positive integer $n$ such that $%
(J^{n})^{\ast }$ is principal. That is if $I$ is an ideal that is both $\ast 
$-super homog and $\ast $-waf-homog then for every $\ast $-ideal $J$ of
finite type containing $I$ there is a positive integer $n$ such that $%
(J^{n})^{\ast }$ is principal. But that, in case $I$ is $\ast $-invertible,
makes $I$ a $\ast $-af-ideal as Definition \ref{Definition S} tells us and
if we are considering a $\ast $-super homog ideal that is also a $\ast $-waf
homog ideal then we have a $\ast $-af homog ideal. Conversely if $I$ is $%
\ast $-af-homog, then $I$ is obviously a $\ast $-super homog and a $\ast $%
-waf-homog ideal because it satisfies the \textquotedblleft if $I$ is $\ast $%
-invertible\textquotedblright , vacuously . This gives us the following
result.

\begin{proposition}
\label{Proposition 4Bf} A $\ast$-ideal $I$ of finite type is $\ast$-af-homog
if and only if $I$ is a $\ast$-super homog and a $\ast$-waf-homog ideal.
\end{proposition}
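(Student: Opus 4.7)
The plan is to prove this by a direct chase of the three definitions involved, using only that a $\ast$-af-homog ideal is by definition $\ast$-homog, a $\ast$-super homog ideal is $\ast$-homog with every overlying $\ast$-ideal of finite type being $\ast$-invertible, and a $\ast$-waf-homog ideal is $\ast$-homog with a conditional principal-power clause that only activates when $I$ itself is $\ast$-invertible. No heavy machinery should be required, but we must be careful about the vacuous / conditional nature of the waf-homog clause.

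For the forward direction, assume $I$ is $\ast$-af-homog. First observe that if $J$ is any $\ast$-ideal of finite type containing $I$, then by definition there is $n$ with $(J^n)^\ast$ principal; since a principal ideal is trivially $\ast$-invertible and $\ast$-invertibility of $(J^n)^\ast$ forces $J$ to be $\ast$-invertible, condition (1) of Definition \ref{Definition Q} holds. Since $I$ is $\ast$-homog, the ``$(A+B)^\ast \neq D$'' condition (2) of Definition \ref{Definition Q} also holds, so $I$ is $\ast$-super homog. For the waf-homog part, suppose $I$ is $\ast$-invertible (we need not verify this, only use the hypothesis conditionally). For any $\ast$-invertible $\ast$-ideal $J \supseteq I$, $J$ is in particular a $\ast$-ideal of finite type containing $I$, so $(J^n)^\ast$ is principal for some $n$ by the af-homog definition. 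Hence $I$ is $\ast$-waf-homog.

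For the converse, assume $I$ is both $\ast$-super homog and $\ast$-waf-homog. By $\ast$-super homogeneity, $I$ itself (taking $J = I$ in condition (1) of Definition \ref{Definition Q}) is $\ast$-invertible, so the hypothesis of the waf-homog clause is triggered. Now let $J$ be any $\ast$-ideal of finite type with $J \supseteq I$. By super homogeneity $J$ is $\ast$-invertible, and then by waf-homogeneity applied to this $\ast$-invertible $J \supseteq I$, there is a positive integer $n$ with $(J^n)^\ast$ principal. Combined with the fact that $I$ is $\ast$-homog (inherited from either hypothesis), this is exactly the definition of $\ast$-af-homog.

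The only subtlety I anticipate is the one the author has already flagged in the paragraph preceding the statement: the waf-homog clause is conditional on $I$ being $\ast$-invertible, so the converse direction must explicitly invoke super homogeneity to secure that $I$ is $\ast$-invertible before the waf-homog clause can be applied to arbitrary $J \supseteq I$. Once that is observed, the rest is a routine matching of clauses, and no further argument is needed.
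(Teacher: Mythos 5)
Your proof is correct and follows essentially the same definition-chasing route as the paper's own argument: af-homog gives super homog because a principal $\ast$-power forces $\ast$-invertibility, and conversely super homogeneity supplies both the $\ast$-invertibility of $I$ (activating the conditional waf clause) and of every finite-type $\ast$-ideal over $I$, to which the waf clause then applies. If anything, your handling of the forward direction is slightly more careful than the paper's, which describes the waf condition as being satisfied ``vacuously'' even though an af-homog ideal is in fact $\ast$-invertible.
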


We already know that a domain whose nonzero non units are products of $\ast $%
-af-elements is an AGCD\ $\ast$-IRKT. All that remains is making links with
other related concepts.

\begin{proposition}
\label{Proposition 4Bg} For a domain $D$ the following statements are
equivalent: (1) $D$ is a $\ast$-IRKT whose $\ast$-super homog ideals are
also $\ast$-wf-homog, (2) $D$ is a $\ast$ waf-SH domain whose $\ast$%
-waf-homog ideals are also $\ast$-super homog, (3) $D$ is an AGCD $\ast$%
-IRKT, (4) $D$ is a $\ast$-IRKT with $Cl_{\ast}(D)$ torsion, (5) $D$ is a
locally AGCD $\ast $-IRKT and $Cl_{d}(D)$ is torsion.
\end{proposition}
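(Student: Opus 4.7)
The plan is to mimic the proof of Proposition \ref{Proposition 4Aj}, replacing Proposition \ref{Proposition 4Ah} by Proposition \ref{Proposition 4Bf}, and Theorem \ref{Theorem S2} by Theorem \ref{Theorem R2} together with Proposition \ref{Proposition Q8}. The governing observation is that a $\ast $-IRKT is a P$\ast $MD (Proposition \ref{Proposition Q3}), so every $\ast $-ideal of finite type is $\ast $-invertible and, by Remark \ref{Z}, every $\ast $-homog ideal is automatically $\ast $-super homog; hence the side-clauses in (1) and (2) are really symmetric, and Proposition \ref{Proposition 4Bf} collapses ``$\ast $-super homog $+$ $\ast $-waf-homog'' into ``$\ast $-af-homog''.

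For $(1)\Leftrightarrow (3)$: assuming $(1)$, Proposition \ref{Proposition Q3} writes each nonzero non unit as a $\ast $-product of $\ast $-super homog ideals, which by hypothesis are also $\ast $-waf-homog, hence $\ast $-af-homog by Proposition \ref{Proposition 4Bf}; Theorem \ref{Theorem R2} then delivers $(3)$. Conversely, an AGCD $\ast $-IRKT is a $\ast $-IRKT, and any $\ast $-super homog ideal $I$ is $\ast $-invertible; the AGCD hypothesis, applied as in the proof of Proposition \ref{Proposition Q8}, shows that every $\ast $-ideal of finite type containing $I$ has a power that is principal, so $I$ is $\ast $-waf-homog.

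For the cycle $(1)\Rightarrow (2)\Rightarrow (4)\Rightarrow (3)$: $(1)\Rightarrow (2)$ is immediate, since in a $\ast $-IRKT every $\ast $-waf-homog ideal is $\ast $-homog and therefore $\ast $-super homog by Remark \ref{Z}, while the $\ast $-waf decomposition comes from the $\ast $-super homog decomposition of $(1)$. For $(2)\Rightarrow (4)$, Proposition \ref{Proposition 4Bc} supplies torsion of $Cl_{\ast }(D)$, and the fact that the $\ast $-waf-homog factors are also $\ast $-super homog promotes $D$ to a $\ast $-IRKT by Proposition \ref{Proposition Q3}. Finally, $(4)\Rightarrow (3)$ follows because in a $\ast $-IRKT with torsion $\ast $-class group every $\ast $-super homog $A$ is $\ast $-invertible and hence $(A^{n})^{\ast }$ is principal for some $n$, so Proposition \ref{Proposition Q8} returns AGCD.

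The direction $(3)\Rightarrow (5)$ is routine: AGCD passes to localizations, and, because $\ast =t$ in a P$\ast $MD, $Cl_{d}(D)\subseteq Cl_{t}(D)=Cl_{\ast }(D)$ is torsion. The main obstacle is $(5)\Rightarrow (3)$, for which I plan to adapt Lemma \ref{Lemma 4Ak}. Given $a,b\in D\setminus \{0\}$, the locally AGCD hypothesis supplies, at each of the finitely many maximal $\ast $-ideals $P_{1},\dots ,P_{r}$ containing $ab$, an exponent $n_{i}$ with $(a^{n_{i}}D\cap b^{n_{i}}D)D_{P_{i}}$ principal; letting $n$ be the least common multiple of the $n_{i}$, the ideal $A=a^{n}D\cap b^{n}D$ is locally principal at every maximal $\ast $-ideal (trivially so off the $P_{i}$) and, exactly as in Lemma \ref{Lemma 4Ak}, a $v$-ideal of finite type. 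Being locally principal it is flat, and a flat $v$-ideal of finite type is invertible by \cite[Proposition 1]{Z90}. Torsion of $Cl_{d}(D)$ then yields $m$ with $A^{m}$ principal, and since $D$ is integrally closed $(A^{m})_{v}=(a^{nm},b^{nm})_{v}$, establishing that $D$ is AGCD. The delicate points are the uniform choice of exponent $n$ (requiring the finite character of the defining family) and the invocation of the flat/$v$-finite criterion to convert local principality into invertibility before cashing in the $Cl_{d}$-torsion hypothesis.
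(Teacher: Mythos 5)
Your treatment of $(1)\Leftrightarrow (3)$, the cycle $(1)\Rightarrow (2)\Rightarrow (4)\Rightarrow (3)$, and $(3)\Rightarrow (5)$ tracks the paper's proof almost verbatim (the paper routes $(4)\Rightarrow (3)$ through Theorem \ref{Theorem R2} where you use Proposition \ref{Proposition Q8} directly, an immaterial difference), and your silent reading of the side-clause in (1) as ``$\ast $-waf-homog'' is the intended one. The problem is in $(5)\Rightarrow (3)$.

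There you adapt Lemma \ref{Lemma 4Ak}, but the adaptation has a genuine gap. Your uniform exponent $n=\mathrm{lcm}(n_{1},\dots ,n_{r})$ is taken only over the finitely many maximal $\ast $-ideals $P_{1},\dots ,P_{r}$ containing $ab$, so what you actually obtain is that $A=a^{n}D\cap b^{n}D$ is principal after localizing at each maximal $\ast $-ideal. That (together with $v$-finiteness and finite $\ast $-character) gives $t$-invertibility of $A$, but it does not give flatness: flatness requires $AD_{M}$ principal for every maximal ($d$-)ideal $M$, and the locally AGCD hypothesis only promises, for each such $M$ with $a,b\in M$, some exponent $n_{M}$ depending on $M$ --- these $M$ need not be $\ast $-ideals, need not be among the $P_{i}$, and may be infinite in number, so no single lcm covers them. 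Consequently you cannot invoke \cite[Proposition 1]{Z90} to get invertibility, and without invertibility the hypothesis that $Cl_{d}(D)$ (as opposed to $Cl_{t}(D)$) is torsion has nothing to act on. This is precisely why the paper does not reuse Lemma \ref{Lemma 4Ak} but proves a separate Lemma \ref{Lemma 4Bh}: after establishing that $a^{n}D\cap b^{n}D$ is $t$-invertible, it uses the locally AGCD hypothesis to produce, for each maximal ideal $M$, an exponent $m_{M}$ with $((a^{n}D\cap b^{n}D)^{m_{M}})_{v}$ principal at $M$, and then appeals to the globalization theorem \cite[Theorem 2.3]{A} to extract a single $m$ making $(a^{nm}D\cap b^{nm}D)$ invertible; only then is the torsion of $Cl_{d}(D)$ cashed in. You flagged the uniformity of the exponent as a delicate point, but the mechanism you propose (finite character of the defining family) only controls the maximal $\ast $-ideals and does not close this gap; some globalization argument of the type in \cite{A} is needed.
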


\begin{proof}
(1) $\Leftrightarrow$ (3) By Proposition \ref{Proposition 4Bf} and Theorem %
\ref{Theorem R2}, (1) $\Rightarrow$ (2) A $\ast$-IRKT is a $\ast$-SH domain.
Now apply Proposition \ref{Proposition 4Bf} (2) $\Rightarrow$ (4) A $\ast $%
-waf-SH domain has torsion $\ast$-class group by Proposition \ref%
{Proposition 4Bc} and every $\ast$-waf-homog ideal being $\ast$-super homog
makes $D$ a $\ast$-IRKT, (4) $\Rightarrow$ (3) Note that if $D$ is $\ast$%
-IRKT with $Cl_{\ast}(D)$ torsion, then every $\ast$-ideal $K$ of $D$ is is $%
\ast$-invertible such that $(K^{n})^{\ast}$ is principal for some $n$ and so
every $\ast$-homog ideal $I$ of $D$ is $\ast$-super homog such that $%
(J^{n})^{\ast}$is principal for every $\ast$-ideal $J$ of finite type
containing $I$. Thus every $\ast$-super homog ideal of $D$ is $\ast$%
-af-homog. Now apply Theorem \ref{Theorem R2}, (3) $\Rightarrow$ (5) $D$
being an AGCD domain implies that $D$ is locally AGCD and $Cl_{t}(D$ is
torsion and $Cl_{t}(D)\supseteq Cl_{\ast}(D)$, (5) $\Rightarrow$ (3) Let's
note that $D$ is a locally AGCD domain if for each maximal ideal $M$ of $D$
we have for each pair $a,b\in D$ a positive integer $n_{M}=n_{M}(a,b)$ such
that $a^{n_{M}}D_{M}\cap b^{n_{M}}D_{M}$ is principal, then prove the
following result.
\end{proof}

\begin{lemma}
\label{Lemma 4Bh} Let $D$ be a locally AGCD domain and let $D$ be of finite $%
t$- character that is $D=\dbigcap \limits_{P\in\mathcal{F}}D_{P}$ is locally
finite. If $Cl_{d}(D)$ is torsion then $D$ is an AGCD domain.
\end{lemma}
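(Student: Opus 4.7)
The plan is to mirror Lemma~\ref{Lemma 4Ak}, but to replace $a,b$ by powers $a^n,b^n$ to accommodate the almost-GCD setting and to close out via the torsion $d$-class group in place of triviality.

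Fix $a,b\in D\setminus\{0\}$. By local finiteness of $\mathcal{F}$, only finitely many members $P_1,\ldots,P_r$ of $\mathcal{F}$ contain $a$ or $b$; at every other $Q\in\mathcal{F}$ at least one of $a,b$ is a unit in $D_Q$, so $a^nD_Q\cap b^nD_Q=a^nb^nD_Q$ is principal for every $n\ge 1$. For each $P_i$ the locally AGCD hypothesis yields $n_i$ with $a^{n_i}D_{P_i}\cap b^{n_i}D_{P_i}$ principal. I would then invoke the AGCD power-monotonicity identity --- in an AGCD domain, $(a^n,b^n)_v=dD$ implies $(a^{kn},b^{kn})_v=d^kD$ for every $k\ge 1$ (write $a^n=d\alpha$, $b^n=d\beta$ with $(\alpha,\beta)_v=D$, and appeal to AGCD on $(\alpha,\beta)$ together with $(xJ)_v=xJ_v$) --- to upgrade to the common exponent $N:=\mathrm{lcm}(n_1,\ldots,n_r)$, so that $a^ND_{P_i}\cap b^ND_{P_i}$ is principal at every $P_i$.

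Put $I:=a^ND\cap b^ND$ and run the argument of Lemma~\ref{Lemma 4Ak} with $a^N,b^N$ in place of $a,b$: choose $x_i\in I$ with $x_iD_{P_i}=ID_{P_i}$, set $A:=(a^Nb^N,x_1,\ldots,x_r)\subseteq I$, and check $AD_P=ID_P$ for every $P\in\mathcal{F}$ (at $P_i$ via $x_i$, elsewhere via $a^Nb^N$). Then $I\subseteq A^\tau\subseteq A_v=A_t$ while $A_v\subseteq I_v=I$, so $I=A_v$ is a $v$-ideal of finite type; being locally principal it is flat, and by \cite[Proposition 1]{Z90} a flat $v$-ideal of finite type is invertible.

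Because $Cl_d(D)$ is torsion, $I^m=cD$ is principal for some $m\ge 1$. To turn this into the AGCD conclusion I identify $I^m$ with $J:=a^{Nm}D\cap b^{Nm}D$. Rerunning the preceding paragraph with $Nm$-th powers (permitted by the same power-monotonicity identity, which guarantees local principality of $a^{Nm}D_{P_i}\cap b^{Nm}D_{P_i}$) shows $J$ is also a $v$-ideal of finite type, so the equality $I^m=J$ can be checked at each $P\in\mathcal{F}$: outside $\{P_i\}$ both sides localize to $a^{Nm}b^{Nm}D_P$, and at $P_i$ the identity $(a^{Nm},b^{Nm})_v D_{P_i}=d_i^m D_{P_i}=(ID_{P_i})^m=I^mD_{P_i}$ closes the loop. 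Hence $J=I^m=cD$ is principal, so $D$ is AGCD. The chief obstacle is the AGCD power-monotonicity used twice above; granted that (a standard fact reducing to preservation of $v$-coprimality under powers in an AGCD local ring), the remainder is a faithful upgrade of Lemma~\ref{Lemma 4Ak}.
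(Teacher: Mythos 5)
Your overall strategy matches the paper's up to the point where invertibility of $I=a^ND\cap b^ND$ must be established, but the step ``being locally principal it is flat'' contains a genuine gap. What your construction actually gives is principality of $ID_P$ for $P\in\mathcal{F}$, i.e.\ at the maximal $t$-ideals: at the finitely many $P_i$ via the lcm $N$ and the power-monotonicity identity, and trivially at the members of $\mathcal{F}$ avoiding $a$ and $b$. Flatness (and hence \cite[Proposition 1]{Z90}) requires $ID_M$ to be principal at \emph{every maximal ideal} $M$. In Lemma \ref{Lemma 4Ak} this came for free because $D_M$ is a GCD domain for every maximal $M$, with no exponent to control. Here, for a maximal ideal $M$ containing $a$ or $b$ (of which there may be infinitely many even though only finitely many maximal $t$-ideals contain them), local AGCD-ness only supplies some exponent $n_M$ depending on $M$, and your $N$ need not be a multiple of $n_M$; so $a^ND_M\cap b^ND_M$ need not be principal and flatness is not available. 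Since the hypothesis is that $Cl_d(D)$, not $Cl_t(D)$, is torsion, you genuinely need $d$-invertibility of some power of $I$, so the gap cannot be repaired by working only $t$-locally.

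The paper closes exactly this gap by a different device: from $t$-local principality it first concludes that $I$ is $t$-invertible (a finite-type $t$-ideal that is $t$-locally principal is $t$-invertible), then uses local AGCD-ness at \emph{each} maximal ideal $M$ to produce an exponent $m_M$ with $(I^{m_M})D_M$ principal, and invokes the globalization theorem \cite[Theorem 2.3]{A} to conclude that a single power $(I^m)_v=a^{Nm}D\cap b^{Nm}D$ is actually invertible; only then is the torsion hypothesis on $Cl_d(D)$ applied to obtain a principal power. Your reduction to finitely many $P_i$, the lcm step, the power-monotonicity identity in the local AGCD rings, and the final identification of $I^m$ with $a^{Nm}D\cap b^{Nm}D$ all track the paper's argument; what is missing is the passage from $t$-local data to honest ($d$-)invertibility, which requires the globalization argument rather than flatness.
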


\begin{proof}
Let $w$ be the usual star operation induced by $\{D_{P}\}_{P\in t\text{-}%
\max (D)}.$Since $D$ is locally AGCD, each of $D_{P}$ is an AGCD domain and
so for each pair $a,b$ of non zero elements of $D$ we have for some positive
integer $n_{P},$ $(a^{n_{P}}D\cap b^{n_{P}}D)D_{P}=a^{n_{P}}D_{P}\cap
b^{n_{P}}D_{P}$ principal (actually, as $D_{P}$ is $t$-local and AGCD $D_{P}$
is an almost Bezout domain.) Now for each $P$ there would be a medley of
numbers ($\{n_{M(P)}$ for each maximal ideal $M$ containing $P\}$ but
choosing any one would serve our purpose. Because $D=\dbigcap\limits_{P\in t%
\text{-}\max (D)}D_{P}$ is locally finite, $aD\cap bD$ is contained in at
most a finite number $P_{1},P_{2},...,P_{r}$ of members of $t$-$\max (D)$,
ones that contain $a$ or $b.$ Choose $n=\func{lcm}%
(n_{P_{1}},n_{P_{2}},...,n_{P_{r}}).$ Now as $(a^{n_{P_{i}}}D\cap
b^{n_{P_{i}}}D)D_{Pi}=a^{n_{P_{i}}}D_{P_{i}}\cap b^{n_{Pi}}D_{P_{i}}$ is
principal, for each $i,$ and as $n_{P_{i}}|n$ we have $%
(a^{n_{P_{i}}}D_{P_{i}}\cap
b^{n_{Pi}}D_{P_{i}})^{n/n_{p_{i}}}=((a^{n_{P_{i}}})^{n/n_{p_{i}}}D_{P_{i}}%
\cap (b^{n_{Pi}})^{n/n_{p_{i}}}D_{P_{i}})=a^{n}D_{P_{i}}\cap b^{n}D_{P_{i}}$ 
$=$ $(a^{n}D\cap b^{n}D)D_{P_{i}}=d_{i}D_{P_{i}}$ principal and of course
for all those $Q\in \mathcal{F}$ such that none of $a,b$ belong to $Q$ we
have $(a^{n}D\cap b^{n}D)D_{Q}=D_{Q}$ and hence principal we conclude, as in
the proof of Lemma \ref{Lemma 4Ak} that $(a^{n}D\cap
b^{n}D)=(ab,d_{1},...,d_{r})_{w}$ $=(ab,d_{1},...,d_{r})_{v}.$ Going back
again and applying the result that if $A$ is a $t$-ideal of finite type and $%
t$-locally principal then $A$ is $t$-invertible. Now $(a^{n}D\cap b^{n}D)$
is $t$-invertible and so, of finite type and as $D$ is locally AGCD there is
for each maximal ideal $M$ a positive integer $m_{M}$ such that $%
((a^{n}D\cap b^{n}D)D_{M})^{m_{M}})_{v}=d_{M}D_{M}$ or, as $(a^{n}D\cap
b^{n}D)$ is $t$-invertible $(a^{n(m_{M})}D\cap b^{n(m_{M})}D)=d_{M}D_{M}.$
Thus by Theorem 2.3 of \cite{A} there is a positive integer $m$ such that $%
((a^{n}D\cap b^{n}D)^{m})_{v}=(a^{nm}D\cap b^{nm}D)$ is invertible. But as
the $d$-class group of $D$ is torsion we have $(a^{nm}D\cap b^{nm}D)^{r}=dD$
for a positive integer $r$ and for $d\in D.$ Proving that for for each pair $%
a,b\in D\backslash 0$ there is a positive integer $t$ such that $a^{t}D\cap
b^{t}D$ is principal.
\end{proof}

The above results can give us more examples of general $\ast $-waf-SH
domains and indeed it may not be too hard to construct examples of $\ast $%
-waf-SH domains of higher dimensions. But, as it stands, most of the
available examples are one dimensional. So, for now, we look at one
dimensional $\ast $-waf-SH domains. For that let's start with the definition
of $\ast $-waf-homog ideals. We can say that a $\ast $-homog ideal of type $%
1 $ that is a $\ast $-waf ideal as well is a $\ast $-waf-homog ideal of type 
$1.$ Similarly we can just breeze through other definitions and results
saying that a $\ast $-waf-SH domain of type $1.$ In the $t$-dimension $1$
scenario one source that stands out is \cite{AMo}. In it, Anderson and Mott
discuss domains with only finitely many non-associated irreducible elements.
These domains are called Cohen Kaplansky domains, because Cohen and
Kaplansky were the first to study them in \cite{CK}. It turns out that
CK-domains are weakly Krull domains with only a finite number of maximal $t$%
-ideals and $D_{P}$ is a CK-domain for each maximal $t$-ideal $P.$ Indeed
each maximal $t$-ideal is of height one and maximal, this is because of the
fact that if $D$ has only a finite number of maximal $t$-ideals then these
maximal $t$-ideals are precisely the maximal ideals of $D.$ It was also
established in \cite{AMo} that a CK-domain $D$ is an AGCD domain that
happens to have $Cl_{t}(D)=0.$ In other words a CK-domain is weakly
factorial domain and an almost weakly factorial domain.

The other important source of examples is \cite{AnCP}. In this paper the
authors study under the name of generalized weakly factorial domains the
domain whose nonzero non units $x$ have the property that for each $x$ there
is a positive integer $n$ such that $x^{n}$ is a product of primary
elements. These are weakly Krull domains with torsion $t$-class group.
(Indeed as $xD=((xD_{P_{1}}\cap D)(xD_{P_{2}}\cap D)...(xD_{P_{r}}\cap
D))_{t}$ and the $t$-class group is torsion we get the same result.)

The second author got interested in generalizing the existing notions of
unique factorization from Professor P.M. Cohn's work. Perhaps the second
author was not too interested in non-commutative algebra, that Cohn was so
admirably good at, the second author chose to concentrate on unique
factorization in commutative ring theory. His first attempt was the theory
of GUFDs. Then he tried to mimic Cohn's rigid factorizations \cite{C1} in
the commutative rings. Apparently all he had to go on was that if $r$ is
rigid in the non-commutative domain $R$ then the lattice $L(Rr,R)$ was a
chain and that Paul Cohn used 2-firs for rigid factorizations. Another good
yet brief source, if you want to have a quick idea is Cohn's survey on UFDs 
\cite{C}. Now in the commutative case, $r$ being rigid boils down to a
non-unit $r$ such that for all $x,y|r$ we have $x|y$ or $y|x.$ But then an
irreducible element is also rigid and products of irredible elements produce
unique factorization under some very stringent conditions. Now Cohn's 2-firs
in the commutative case are Bezout domains. It was easy to show that in a
Bezout domain a product of finitely many rigid elements can be uniquely
written as a product of mutually coprime rigid elements. So, he tried to see
if a product of finitely many rigid elements in a GCD domain $D$ is uniquely
expressible as a product of mutually coprime rigid elements. It worked and
he wrote his paper on Semirigid GCD domains \cite{Zr2}. But the question
was: How to define a rigid element so that in a general commutative domain $%
D $ a finite product of rigid elements is uniquely expressible as a product
of mutually coprime \textquotedblleft improved\textquotedblright\ rigid
elements? The definition of $\ast $-f-homog does that. Now the question is:
Can we do something similar to the definition of rigid in the
non-commutative case, to get better results?

\bigskip

\bigskip

\bigskip

\bigskip

\bigskip

\bigskip

\bigskip

\end{document}